\newtheorem{theorem}[subsection]{Theorem}
\newtheorem{proposition}[subsection]{Proposition}
\newtheorem{lemma}[subsection]{Lemma}
\newtheorem{corollary}[subsection]{Corollary}
\theoremstyle{definition}
\newtheorem{remark}[subsection]{Remark}
\newtheorem{example}[subsection]{Example}
\numberwithin{equation}{subsection}
\def\gm{\mathfrak m}
\def\mm{\mathfrak m}
\def\dg{\mathbf g}
\def\dh{\mathbf h}
\def\dt{\mathbf t}
\def\dH{\mathbf H}
\def\dG{\mathbf G}
\begin{document}
\title[]{A cohomological proof of Peterson-Kac's theorem on conjugacy
of Cartan subalgebras
for affine Kac--Moody Lie algebras}

\author{V. Chernousov}
\address{Department of Mathematics, University of Alberta,
    Edmonton, Alberta T6G 2G1, Canada}
\thanks{ V. Chernousov was partially supported by the Canada Research
Chairs Program
and an NSERC research grant} \email{chernous@math.ualberta.ca}

\author{V.  Egorov}
\address{Department of Mathematics, University of Alberta,
    Edmonton, Alberta T6G 2G1, Canada}\email{uyahorau@math.ualberta.ca}
\author{P. Gille}
\address{UMR 8553 du CNRS, \'Ecole Normale Sup\'erieure, 45
rue d'Ulm, 75005 Paris, France.} \email{gille@ens.fr}

\author{A. Pianzola}
\address{Department of Mathematics, University of Alberta,
    Edmonton, Alberta T6G 2G1, Canada.
    \newline
 \indent Centro de Altos Estudios en Ciencia Exactas, Avenida de Mayo 866, (1084) Buenos Aires, Argentina.}
\thanks{A. Pianzola wishes to thank NSERC and CONICET for their
continuous support}\email{a.pianzola@math.ualberta.ca}


\begin{abstract}
\noindent This paper
deals with the problem of conjugacy of Cartan subalgebras for
affine Kac-Moody Lie algebras. Unlike the methods used by Peterson
and Kac, our approach
is entirely cohomological and geometric. It is deeply rooted on the theory
of reductive group schemes developed by Demazure and Grothendieck, and on
the work of J. Tits on buildings.\\
{\em Keywords:} Affine Kac-Moody Lie algebra, Conjugacy, Reductive group scheme, Torsor,
Laurent polynomials,
Non-abelian cohomology.  \\
{\em MSC 2000} 17B67, 11E72, 14L30, 14E20.
\end{abstract}

\maketitle


\section{Introduction} Chevalley's theorem on the conjugacy of split Cartan
subalgebras is one of the cornerstones of the theory of simple finite
dimensional Lie algebras over a field of characteristic $0.$ Indeed, this
theorem affords the most elegant proof that the root system is an invariant
of the Lie algebra.

The analogous result for symmetrizable Kac-Moody Lie algebras is the
celebrated theorem of Peterson and Kac \cite{PK} (see also \cite{Kmr}
and \cite{MP} for detailed proofs). Beyond the finite dimensional case,
by far the most important Kac-Moody Lie algebras are the affine ones.
These algebras sit at the ``border" of finite dimensional Lie theory and
they can in fact be viewed as ``finite dimensional" (not over the base field
but over a Laurent polynomial ring) in the sense of \cite{SGA3}. This
approach begs the question as to whether an SGA-inspired proof of conjugacy
exists in the affine case. This paper, which builds in \cite{CGP} and
\cite{GP3}, shows that the answer is yes. More precisely, in
\cite{P1} (the untwisted case) and \cite{CGP} (general case) conjugacy is
established for loop algebras by purely Galois cohomological methods. The
step that is missing is extending this result to the ``full" Kac-Moody Lie
algebra. The central extension presents of course no difficulties, but the
introduction of the derivation does. The present paper addresses this issue
thus yielding a new cohomological proof of the conjugacy theorem of Peterson
and Kac in the case of affine Kac-Moody Lie algebras.



\section{Affine Kac-Moody Lie algebras}\label{affinerealization}

\noindent
{\bf Split case.}  Let $\dg$ be a split simple finite dimensional Lie
algebra over an algebraically closed field $k$ of characteristic $0$
and let ${\rm {\bf Aut}}(\dg)$ be its automorphism group.
If $x,y\in \dg$ we denote their product in $\dg$ by $[x,y]$.
We also let $R=k[t^{\pm 1}]$, 
and $L(\dg)=\dg \otimes_k R$. We still denote the Lie product in
$L(\dg)$ by $[x,y]$ where $x,y\in L(\dg)$.

The main object under consideration in this paper is the affine
(split or twisted) Kac-Moody Lie algebra $\widehat{L}$
corresponding to $\dg$.  Any split affine Kac-Moody Lie algebra is of
the form (see \cite{Kac})
$$ \widehat{L} = \dg\otimes_k R \oplus k\,c \oplus k\,d.$$
The element $c$ is central and $d $ is a degree derivation for a natural
grading of $L(\dg)$: if $x\in \dg$ and $p\in\mathbb{Z}$ then
$$
[d,x\otimes t^p]_{\widehat{L}}=p\,x\otimes t^p.
$$

If $l_1=x\otimes t^p,\ l_2=y\otimes t^q\in L(\dg)$ are viewed as elements
in $\widehat{L}$  their Lie product is given by
$$
[x\otimes t^p, y\otimes t^q]_{\widehat{L}}=[x,y]\otimes t^{p+q}+
p\,\langle\,x,y\,\rangle\,  \delta_{0,p+q} \cdot c
$$
where $\langle\,x,y\,\rangle$ is the Killing form on $\dg$ and
$\delta_{0,p+q}$ is Kronecker's delta.


\smallskip

\noindent
{\bf Twisted case.} Let $m$ be a positive integer. Let
$S=k[t^{\pm \frac{1}{m}}]$ be the
ring of Laurent polynomials in the variable $s=t^{\frac{1}{m}}$
with coefficients in $k$. Let
$$L(\dg)_S = L(\dg)\otimes_RS$$ be the
Lie algebra  obtained from the $R$-Lie algebra $L(\dg)$ by the base
change $R\rightarrow S$.
Similarly we define Lie algebras
$$\widetilde{L}(\dg)_S=L(\dg)_S\oplus kc \mbox{\ \ and\ \ }
\widehat{L}(\dg)_S=L(\dg)_S\oplus kc\oplus kd.\footnote{Unlike $L(\dg)_S$,
these object exist over $k$ but not over $S.$}
$$

Fix 
a primitive root of unity $\zeta\in
k$ of degree $m$. The $R$-automorphism
$\zeta^{\times}:S\rightarrow S$ given by $s\mapsto \zeta s$
generates the Galois group $\Gamma={\rm Gal}(S/R)$ which we may identify with
the abstract group $\mathbb{Z}/m\mathbb{Z}$
by means of $\zeta^{\times}$. Note that $\Gamma$ acts naturally on
$
{\rm
{\bf Aut}}(\dg)(S)=
{\rm Aut}_{S-Lie}(L(\dg)_S)$
and on $L(\dg)_S=L(\dg)\otimes_R S$
through the second factor.

Next, let $\sigma$ be an automorphism of $\dg$ of order $m$. This
gives rise to an $S$-automorphism of $L(\dg)_S$ via $x\otimes
s\mapsto \sigma(x)\otimes s$ for $x\in \dg,\, s\in S.$  It then
easily follows that the assignment
$$
    \overline{1}\mapsto z_{\overline{1}}=\sigma^{-1}\in {\rm
    Aut}_{S-Lie}(L(\dg)_S)
$$
gives rise to a cocycle $z=(z_{\overline{i}})\in Z^1(\Gamma,{\rm
Aut}_{S-Lie}(L(\dg)_S))$.
This cocycle, in turn, gives rise to a twisted action of $\Gamma$
on $L(\dg)_S.$
Applying Galois descent
formalism we then obtain the $\Gamma$-invariant subalgebra
$$
    L(\dg,\sigma):=(L(\dg)_S)^{\Gamma}=(L(\dg)\otimes_R
    S)^{\Gamma}.
$$
This is a ``simple Lie algebra over $R$" in the sense of \cite{SGA3}, which
is a twisted form of the
``split simple" $R$-Lie algebra $L(\dg)=\dg\otimes_k R.$ Indeed $S/R$ is an
\'etale extension and from properties of Galois descent we have
$$
    L(\dg,\sigma)\otimes_R S\simeq L(\dg)_S=(\dg\otimes_k
    R)\otimes_R S.
$$
Note that $L(\dg,id)=L(\dg).$

For
$\overline{i}\in \mathbb{Z}/m\mathbb{Z},$ consider the eigenspace
$$\dg_{\overline {i}}=\{x\in \dg: \sigma(x)=\zeta^ix \}.$$
Simple computations show that
$$
L(\dg,\sigma)=\bigoplus_{i\in \mathbb{Z}}
\dg_{\overline{i}}\otimes k[t^{\pm 1}] s^i.
$$

Let
$$\widetilde{L}(\dg,\sigma):=L(\dg,\sigma)\oplus kc \mbox{\ \ and\ \ }
\widehat{L}(\dg,\sigma):=L(\dg,\sigma)\oplus kc\oplus kd.$$
We give $\widehat{L}(\dg,\sigma)$ a Lie algebra structure such
that $c$ is central element, $d$ is the degree derivation, i.e. if
$x\in \dg_{\overline{i}}$ and $p\in \mathbb{Z}$ then
\begin{equation}\label{newbracket}
[d,x\otimes t^{\frac{p}{m}}]:=px\otimes t^\frac{p}{m}
\end{equation}
and if $y\otimes t^{\frac{q}{m}}\in L(\dg,\sigma)$ we get
$$
[x\otimes t^{\frac{p}{m}}, y\otimes
t^{\frac{q}{m}}]_{\widehat{L}(\dg,\sigma)}=[x,y]\otimes
t^{\frac{p+q}{m}}+ p\,\langle\,x,y\,\rangle\,  \delta_{0,p+q}
\cdot c,
$$
where, as before, $\langle\,x,y\,\rangle$ is the Killing form on
$\dg$ and $\delta_{0,p+q}$ is  Kronecker's delta.

\begin{remark} Note that
the Lie algebra
structure on $\widehat{L}(\dg,\sigma)$ is induced by that of on
$\widehat{L}(\dg)_S$
if we view $\widehat{L}(\dg,\sigma)$ as a subset of $\widehat{L}(\dg)_S$.
\end{remark}
\begin{remark} Let $\widehat{\sigma}$ be an automorphism
of $\widehat{L}(\dg)_S$ such that $\widehat{\sigma}|_{L(\dg)_S}=\sigma$,
$\widehat{\sigma}(c)=c, \ \widehat{\sigma}(d)=d$. Then
$\widehat{L}(\dg,\sigma)=
(\widehat{L}(\dg)_S)^{\widehat{\sigma}}$.
\end{remark}

\noindent
{\bf Realization Theorem.} {\it
{\rm (a)} The Lie algebra $\widehat{L}(\dg,\sigma)$ is an affine Kac-Moody
Lie algebra, and every affine Kac-Moody Lie algebra is isomorphic
to some $\widehat{L}(\dg,\sigma)$.

\smallskip

\noindent
{\rm (b)}
$\widehat{L}(\dg,\sigma)\simeq\widehat{L}(\dg,\sigma^{'})$ where
$\sigma^{'}$ is a diagram automorphism with respect to some Cartan
subalgebra of $\dg$.}

\begin{proof}
See \cite[Theorems $7.4,\,8.3$ and $8.5$]{Kac}.
\end{proof}

Let $\phi\in {\rm Aut}_{k-Lie}(\widehat{L}(\dg)_S)$. Since
$\widetilde{L}(\dg)_S$ is the
derived subalgebra of $\widehat{L}(\dg)_S$ the restriction
$\phi|_{\widetilde{L}(\dg)_S}$
induces a $k$-Lie automorphism of $\widetilde{L}(\dg)_S$. Furthermore,
passing to
the quotient $\widetilde{L}(\dg)_S/kc\simeq L(\dg)_S$ the automorphism
$\phi|_{\widetilde{L}(\dg)_S}$ induces an automorphism
of $L(\dg)_S$. This yields a well-defined morphism $${\rm Aut}_{k-Lie}
(\widehat{L}(\dg)_S)\to {\rm Aut}_{k-Lie}(L(\dg)_S).$$ Similar
considerations apply to
${\rm Aut}_{k-Lie}(\widehat{L}(\dg,\sigma))$. The aim of the next few sections
is to show that
these two morphisms are surjective.


\section{$S$-automorphisms of $L(\dg)_S$}

In this section we
construct a ``simple'' system of generators of the automorphism
group $$
{\rm  {\bf Aut}}(\dg)(S)=
{\rm Aut}_{S-Lie}(L(\dg)_S)$$ 
which
can be easily extended to $k$-automorphisms of $\widehat{L}(\dg)_S$. We
produce our list of generators based on a well-known fact that
the group in question is generated by $S$-points of the
corresponding split simple adjoint algebraic group and automorphisms of the
corresponding Dynkin diagram.

More precisely, let $\dG$ be the split simple
simply connected  group over $k$ corresponding to $\dg$ and let
$\overline{\dG}$ be the corresponding adjoint group.
Choose a maximal split $k$-torus
${\mathbf T}\subset \dG$ and denote its image in  $\overline{\dG}$ by
$\overline{\mathbf T}$.
The Lie algebra of ${\mathbf T}$ is a Cartan subalgebra $\dh\subset \dg$.
We fix a Borel subgroup ${\mathbf T}\subset{\mathbf  B}\subset \dG$.

Let $\Sigma = \Sigma(\dG,{\mathbf T})$ be the root system of $\dG$ relative to
${\mathbf T}$.
The Borel subgroup ${\mathbf B}$ determines an ordering of $\Sigma$, hence
the system of simple roots $\Pi = \{\alpha_1,\ldots,\alpha_n \}$.
Fix a Chevalley basis \cite{St67}
$$
\lbrace H_{\alpha_1},\ldots H_{\alpha_n},\ X_{\alpha},\ \alpha\in\Sigma\rbrace
$$
of  $\dg$ corresponding to the pair $({\mathbf T},{\mathbf B})$.
This basis is unique up to signs
and automorphisms of $\dg$ which preserve ${\mathbf B}$ and ${\mathbf T}$ (see
\cite[\S 1, Remark $1$]{St67}).

Since $S$ is a Euclidean ring, by Steinberg \cite{St62} the group $\dG(S)$
is generated by the
so-called root
subgroups $U_{\alpha} = \langle x_{\alpha}(u)\mid u \in S\rangle$,
where $\alpha\in\Sigma$ and
\begin{equation}\label{exponent}
x_{\alpha}(u)=\exp(uX_{\alpha})=\sum_{n=0}^{\infty}u^nX_{\alpha}^n\left/n!\right.\,
\end{equation}

We recall also that by~\cite[\S 10, Cor. (b) after
Theorem $29$]{St67}, every automorphism $\sigma$ of the Dynkin diagram
${\rm Dyn}(\dG)$
of $\dG$ can be extended to an automorphism of $\dG$ (and hence of
$\overline{\dG}$) and
$\dg$, still denoted by $\sigma$, which takes
$$
x_{\alpha}(u)\longrightarrow x_{\sigma(\alpha)}(\varepsilon_{\alpha}u)
\mbox{\ \
and\ \ }
X_{\alpha}\longrightarrow \varepsilon_{\alpha}X_{\sigma(\alpha)}.
$$
Here $\varepsilon_{\alpha}=\pm 1$ and if $\alpha\in \Pi$ then
$\varepsilon_{\alpha}=1$.
Thus we have a natural embedding
$$
{\rm Aut}({\rm Dyn}(\dG))
\hookrightarrow {\rm Aut}_{S-Lie}(L(\dg)_S).
$$

The group $\overline{\dG}(S)$ acts by $S$-automorphisms on $L(\dg)_S$ through
the adjoint representations $ad: \overline{\dG}\to
{\mathbf{GL}}(L(\dg)_S)$ and hence we also have a canonical embedding
$$
\overline{\dG}(S)\hookrightarrow {\rm Aut}_{S-Lie}(L(\dg)_S).
$$
As we said before, it is well-known (see \cite{P3} for example) that
$$
{\rm Aut}_{S-Lie}\,(L(\dg)_S)=\overline{\dG}\,(S)\rtimes {\rm Aut}({\rm Dyn}
(\dG)).
$$
For later use we need one more fact.

\begin{proposition} Let $f:\dG\to \overline{\dG}$ be the canonical
morphism.
The group $\overline{\dG}(S)$ is generated by the root subgroups
$f(U_{\alpha}), \ \alpha\in\Sigma$, and $\overline{{\mathbf T}}(S)$.
\end{proposition}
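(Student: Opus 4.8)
The plan is to exploit the exact sequence of $k$-groups $1 \to \bmu \to \dG \xrightarrow{f} \overline{\dG} \to 1$, where $\bmu$ is the (finite, central, multiplicative type) kernel, and to analyze the resulting exact sequence of pointed sets in fppf (equivalently \'etale, since $\bmu$ is smooth over the field $k$ of characteristic zero) cohomology over $S$:
\[
\dG(S) \xrightarrow{f} \overline{\dG}(S) \xrightarrow{\partial} H^1_{\et}(S,\bmu).
\]
Let $\overline{\dG}(S)^+$ denote the subgroup generated by the root subgroups $f(U_\alpha)$ and by $\overline{\mathbf T}(S)$. Certainly $\overline{\dG}(S)^+ \supseteq f(\dG(S))$, since $\dG(S)$ is generated by the $U_\alpha$ by Steinberg's theorem (used already in the excerpt), and $f(U_\alpha) \subseteq \overline{\dG}(S)^+$ by definition; moreover $\overline{\mathbf T}(S) \subseteq \overline{\dG}(S)^+$. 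So the first step is to reduce the claim to showing that $\overline{\dG}(S)^+$ already surjects onto $\overline{\dG}(S)$, i.e. that every class in the image of $\partial$ is hit by an element of $\overline{\mathbf T}(S)$.

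First I would compute $\partial$ restricted to $\overline{\mathbf T}(S)$. Since $\mathbf T \to \overline{\mathbf T}$ is an isogeny of split tori with kernel $\bmu$, the connecting map $\overline{\mathbf T}(S) \to H^1_{\et}(S,\bmu)$ is the standard Kummer-type boundary, and because $\bmu$ is a product of groups $\bmu_{d}$ (as $k$ is algebraically closed, hence contains all roots of unity) we have $H^1_{\et}(S,\bmu) \cong \prod \Pic(S)/d \oplus \prod S^\times/(S^\times)^d$. Now $S = k[t^{\pm 1/m}]$ is a Laurent polynomial ring over an algebraically closed field: it is a PID, so $\Pic(S) = 0$, and $S^\times = k^\times \times t^{\bbZ/m}$, so $S^\times/(S^\times)^d$ is generated by the class of $t^{1/m}$ (using again that $k$ is algebraically closed, so $k^\times$ is divisible). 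The key computation is then to check that the cocharacter lattice of $\overline{\mathbf T}$ surjects, modulo the cocharacter lattice of $\mathbf T$, onto the relevant quotient — equivalently that evaluating cocharacters of $\overline{\mathbf T}$ at the unit $t^{1/m} \in S^\times$ produces all of $H^1_{\et}(S,\bmu)$. This holds because $X_*(\overline{\mathbf T})/X_*(\mathbf T) \cong \wh{\bmu}$ (Cartier dual) canonically, which is exactly the group parametrizing $H^1_{\et}(S,\bmu) = \Hom(\wh{\bmu}, S^\times/\cdots)$ in the relevant sense; so a generating set of cocharacters of $\overline{\mathbf T}$, evaluated at $t^{1/m}$, hits a generating set of $H^1_{\et}(S,\bmu)$.

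Putting this together: given $\bar g \in \overline{\dG}(S)$, the class $\partial(\bar g) \in H^1_{\et}(S,\bmu)$ equals $\partial(\bar\tau)$ for some $\bar\tau \in \overline{\mathbf T}(S)$ by the previous paragraph, so $\partial(\bar g \bar\tau^{-1}) = 0$, hence $\bar g \bar\tau^{-1} = f(g)$ for some $g \in \dG(S)$ by exactness; writing $g$ as a product of $x_\alpha(u)$'s gives $\bar g = f(g)\,\bar\tau \in \overline{\dG}(S)^+$. The main obstacle I anticipate is the bookkeeping in the second step: one must be careful that the surjectivity of cocharacters onto $H^1_{\et}(S,\bmu)$ is stated and used correctly for non-semisimple-simply-connected type issues (i.e. when $\bmu$ is not cyclic, e.g. type $D_{2n}$), and one must make sure the Picard group vanishing and unit-group computation for $S=k[t^{\pm 1/m}]$ are invoked cleanly. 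An alternative, perhaps cleaner route avoiding explicit cocharacter bookkeeping: argue directly that $\overline{\dG}(S)/\overline{\dG}(S)^+$ injects into $H^1_{\et}(S,\bmu)$ via $\partial$, and separately that $\partial$ kills $\overline{\mathbf T}(S)$-translates generate everything, by base-changing to $R=k[t^{\pm 1}]$ where the analogous statement for $\overline{\dG}(R)$ may already be in the literature (e.g. via \cite{GP3}), then lifting along the finite \'etale $R \to S$.
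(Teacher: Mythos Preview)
Your proposal is correct and follows the same cohomological framework as the paper: both use the exact sequence $1 \to \mathbf{Z} \to \dG \to \overline{\dG} \to 1$ (the paper writes $\mathbf{Z}$ for the center, your $\bmu$) and the associated boundary map $\partial: \overline{\dG}(S) \to H^1(S,\mathbf{Z})$, and both reduce the statement to showing that $\overline{\mathbf T}(S)$ already surjects onto the image of $\partial$.

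The difference is in how that surjectivity is established. You compute $H^1_{\et}(S,\bmu)$ explicitly via Kummer theory, using $\Pic(S)=0$ and the structure of $S^\times$, and then match it against $X_*(\overline{\mathbf T})/X_*(\mathbf T)$. The paper bypasses this computation entirely: it simply applies the long exact sequence to $1 \to \mathbf{Z} \to \mathbf{T} \to \overline{\mathbf T} \to 1$ and observes that the next term $H^1(S,\mathbf{T})$ vanishes (since $\mathbf{T}$ is a split torus and $\Pic(S)=0$), which immediately forces $\overline{\mathbf T}(S) \to H^1(S,\mathbf{Z})$ to be surjective. The same vanishing, via the factorization $H^1(S,\mathbf{Z}) \to H^1(S,\mathbf{T}) \to H^1(S,\dG)$, shows that the image of $\partial$ is in fact all of $H^1(S,\mathbf{Z})$. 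This avoids the cocharacter bookkeeping you flagged as a potential obstacle (including the non-cyclic $D_{2n}$ case), and uses only $\Pic(S)=0$ rather than the finer structure of $S^\times$. Your explicit route has the minor advantage of actually identifying the group $H^1(S,\mathbf{Z})$, but for the proposition at hand the paper's argument is shorter and more robust.
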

\begin{proof} Let ${\mathbf Z}\subset \dG$ be the center of $\dG$. The exact sequence
$$
1\longrightarrow \mathbf{Z} \longrightarrow \dG \longrightarrow
\overline{\dG}\longrightarrow 1
$$
gives rise to an exact sequence in Galois cohomology
$$
f(\dG(S))\hookrightarrow \overline{\dG}(S)
\longrightarrow {\rm Ker}\,[H^1(S,{\mathbf Z})\to H^1(S,\dG)] \longrightarrow 1.
$$
Since $H^1(S,{\mathbf Z})\to H^1(S,\dG)$ factors through
$$
H^1(S,{\mathbf Z})\longrightarrow H^1(S,{\mathbf T})\longrightarrow H^1(S,\dG)
$$
and since $H^1(S,{\mathbf T})=1$ (because ${\rm Pic}\,S=1$) we obtain
\begin{equation}\label{sequence1}
f(\dG(S))\hookrightarrow \overline{\dG}(S)
\longrightarrow H^1(S,{\mathbf Z})\longrightarrow 1.
\end{equation}

Similar considerations applied to
$$
1\longrightarrow {\mathbf Z} \longrightarrow {\mathbf T} \longrightarrow
\overline{{\mathbf T}}\longrightarrow 1
$$
show that
\begin{equation}\label{sequence2}
f({\mathbf T}(S))\hookrightarrow \overline{{\mathbf T}}(S) \longrightarrow
H^1(S,{\mathbf Z})\longrightarrow
1.
\end{equation}
The result now follows from (\ref{sequence1}) and (\ref{sequence2}).
\end{proof}

\begin{corollary}\label{generators1} One has
$$
{\rm Aut}_{S-Lie}\,(L(\dg)_S)=\langle\, {\rm Aut}({\rm Dyn}(\dG)),
\,U_{\alpha},\,
\alpha\in \Sigma,\, \overline{{\mathbf T}}(S)\,\rangle.
$$
\end{corollary}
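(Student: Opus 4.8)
The plan is to combine the two structural facts already established in the excerpt: the semidirect product decomposition
$$
{\rm Aut}_{S-Lie}(L(\dg)_S)=\overline{\dG}(S)\rtimes {\rm Aut}({\rm Dyn}(\dG)),
$$
together with the Proposition just proved, which describes $\overline{\dG}(S)$ in terms of the root subgroups $f(U_\alpha)$ and the torus $\overline{{\mathbf T}}(S)$. So the corollary should be essentially a formal consequence, and the main work is bookkeeping about how the various embeddings into ${\rm Aut}_{S-Lie}(L(\dg)_S)$ fit together.

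First I would fix notation, identifying $\overline{\dG}(S)$, ${\rm Aut}({\rm Dyn}(\dG))$, and each root subgroup $U_\alpha$ with their images inside ${\rm Aut}_{S-Lie}(L(\dg)_S)$ under the canonical embeddings recalled in the text (the adjoint action $ad:\overline{\dG}\to{\mathbf{GL}}(L(\dg)_S)$, and the embedding ${\rm Aut}({\rm Dyn}(\dG))\hookrightarrow{\rm Aut}_{S-Lie}(L(\dg)_S)$). Then, by the semidirect product decomposition, every element of ${\rm Aut}_{S-Lie}(L(\dg)_S)$ is a product of an element of $\overline{\dG}(S)$ and an element of ${\rm Aut}({\rm Dyn}(\dG))$; hence
$$
{\rm Aut}_{S-Lie}(L(\dg)_S)=\langle\,\overline{\dG}(S),\,{\rm Aut}({\rm Dyn}(\dG))\,\rangle.
$$
Next I would invoke the Proposition to replace $\overline{\dG}(S)$ by the subgroup generated by the $f(U_\alpha)$, $\alpha\in\Sigma$, and $\overline{{\mathbf T}}(S)$. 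Finally I would observe that in the statement of the corollary one writes $U_\alpha$ rather than $f(U_\alpha)$; under the adjoint action the root subgroup $U_\alpha\subset\dG$ and its image $f(U_\alpha)\subset\overline{\dG}$ act by the same automorphisms of $L(\dg)_S$ (the kernel ${\mathbf Z}$ of $f$ is central and acts trivially in the adjoint representation), so the two generating sets coincide inside ${\rm Aut}_{S-Lie}(L(\dg)_S)$. Assembling these three observations gives the claimed equality.

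The only point requiring a modicum of care — and what I would flag as the ``main obstacle,'' though it is minor — is the compatibility of the embeddings: one must check that the root subgroups and torus living naturally in $\dG(S)$, after passing through $f$ and then through $ad$, land in the same subgroup of ${\rm Aut}_{S-Lie}(L(\dg)_S)$ that appears as the ``$\overline{\dG}(S)$'' factor of the semidirect product, and likewise that the copy of ${\rm Aut}({\rm Dyn}(\dG))$ used in the decomposition is the one from the explicit action $x_\alpha(u)\mapsto x_{\sigma(\alpha)}(\varepsilon_\alpha u)$. All of this is routine given the conventions already set up in the section, so the proof is short: cite the semidirect product decomposition, cite the Proposition, note that $f(U_\alpha)$ and $U_\alpha$ have the same image under $ad$ because $\ker f$ is central, and conclude.
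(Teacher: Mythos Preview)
Your proposal is correct and follows exactly the approach the paper intends: the corollary is stated without proof immediately after the Proposition, and is meant to be read as the straightforward combination of the semidirect product decomposition ${\rm Aut}_{S-Lie}(L(\dg)_S)=\overline{\dG}(S)\rtimes {\rm Aut}({\rm Dyn}(\dG))$ with that Proposition. Your remarks about identifying $U_\alpha$ with $f(U_\alpha)$ via the adjoint action and about the compatibility of embeddings are the only things one could add, and they are indeed routine given the conventions of the section.
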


\section{$k$-automorphisms of $L(\dg)_S$}

We keep the above notation.  Recall that for any algebra $\mathfrak{A}$
over a field $k$ the centroid
of $\mathfrak{A}$ is
$$
{\rm Ctrd}\,(\mathfrak{A}) =\{ \chi \in {\rm End}_{k}(\mathfrak{A})\ |
\chi (a\cdot b)
=a\cdot \chi(b)=\chi(a)\cdot b \mbox{\ \ for all \ } a,b\in\mathfrak{A}\,\}.
$$
It is easy to check that if $\chi_1,\chi_2\in {\rm Ctrd}(\mathfrak{A})$
then both linear operators
$\chi_1\circ \chi_2$ and $\chi_1+\chi_2$
are contained in
${\rm Ctrd}\,(\mathfrak{A})$ as well.
Thus, ${\rm Ctrd}\,(\mathfrak{A})$ is a unital associative
subalgebra of ${\rm End}_k(\mathfrak{A}).$ It is also well-known
that the centroid is commutative whenever $\mathfrak{A}$ is perfect.

\smallskip

\noindent
{\bf Example.} Consider the $k$-Lie algebra $\mathfrak{A}=L(\dg)_S$.
For any $s\in S$ the
linear $k$-operator $\chi_s: L(\dg)_S\to L(\dg)_S$ given by $x\to xs$ satisfy
$$\chi_s([x,y])=[x,\chi_s(y)]=[\chi_s(x),y],$$ hence $\chi_s\in {\rm Ctrd}\,
(L(\dg)_S)$.
Conversely, it is known (see \cite[Lemma $4.2$]{ABP}) that every element in
${\rm Ctrd}\,(L(\dg)_S)$
is of the form $\chi_s$. Thus,
$$
{\rm Ctrd}\,(L(\dg)_S)=\{\,\chi_s\ | \ s\in S\,\}\simeq S.
$$

\begin{proposition}{\rm (\cite[Proposition $1$]{P3})}\label{generatorsk-Lie}
One has
$$
\begin{array}{lll}
{\rm Aut}_{k-Lie}(L(\dg)_S) & \simeq  & {\rm Aut}_{S-Lie}(L(\dg)_S) \rtimes
{\rm Aut}_k
({\rm Ctrd}\,(L(\dg)_S))\\
& \simeq & {\rm Aut}_{S-Lie}(L(\dg)_S) \rtimes {\rm Aut}_k(S).
\end{array}
$$
\end{proposition}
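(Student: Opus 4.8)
The plan is to exhibit the two natural maps $\operatorname{Aut}_{S\text{-}Lie}(L(\dg)_S)\hookrightarrow \operatorname{Aut}_{k\text{-}Lie}(L(\dg)_S)$ and $\operatorname{Aut}_k(S)\to\operatorname{Aut}_k(\operatorname{Ctrd}(L(\dg)_S))\subset\operatorname{Aut}_k(L(\dg)_S)$, and to show that together they realize $\operatorname{Aut}_{k\text{-}Lie}(L(\dg)_S)$ as a semidirect product. First I would observe that any $k$-Lie automorphism $\phi$ of $L(\dg)_S$ acts on the centroid by conjugation, $\chi\mapsto\phi\circ\chi\circ\phi^{-1}$, and this conjugation action sends $\operatorname{Ctrd}(L(\dg)_S)$ into itself because the centroid is defined intrinsically in terms of the $k$-algebra structure. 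Via the identification $\operatorname{Ctrd}(L(\dg)_S)\simeq S$ recalled just above (from \cite[Lemma 4.2]{ABP}), this produces a group homomorphism $\rho:\operatorname{Aut}_{k\text{-}Lie}(L(\dg)_S)\to\operatorname{Aut}_k(S)$, where I must check that $\rho(\phi)$ is not merely a ring endomorphism but a $k$-algebra automorphism of $S$ — this follows since $\phi^{-1}$ induces the inverse, and since $\chi_1\mapsto\phi\chi_1\phi^{-1}$ is additive and multiplicative on the associative algebra $\operatorname{Ctrd}$.

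Next I would identify the kernel of $\rho$. If $\phi$ centralizes every $\chi_s$, then in particular $\phi(x\cdot s)=\phi(\chi_s(x))=\chi_s(\phi(x))=\phi(x)\cdot s$ for all $x\in L(\dg)_S$ and $s\in S$, so $\phi$ is $S$-linear; being also a Lie automorphism it lies in $\operatorname{Aut}_{S\text{-}Lie}(L(\dg)_S)$. Conversely every $S$-linear Lie automorphism obviously commutes with all $\chi_s$, so $\ker\rho=\operatorname{Aut}_{S\text{-}Lie}(L(\dg)_S)$, which is therefore normal.

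To get the semidirect product decomposition it remains to produce a splitting $\operatorname{Aut}_k(S)\to\operatorname{Aut}_{k\text{-}Lie}(L(\dg)_S)$ of $\rho$. The ring $S=k[s^{\pm1}]$ has automorphism group generated by $s\mapsto as$ ($a\in k^\times$) together with $s\mapsto s^{-1}$ (more intrinsically, $\operatorname{Aut}_k(S)\simeq k^\times\rtimes\{\pm1\}$ acting on the free variable), and for any $k$-algebra automorphism $\theta$ of $S$ I would set $\widehat\theta:=\operatorname{id}_{\dg}\otimes\,\theta$ acting on $L(\dg)_S=\dg\otimes_k S$. This is a $k$-Lie automorphism because the bracket on $L(\dg)_S$ is $S$-bilinear and $\theta$ is a ring homomorphism, and clearly $\rho(\widehat\theta)=\theta$, so $\theta\mapsto\widehat\theta$ is the desired section. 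Combining the kernel computation with this splitting gives the first isomorphism, and the second follows from $\operatorname{Ctrd}(L(\dg)_S)\simeq S$.

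The main subtlety, rather than a genuine obstacle, is the verification that $\rho(\phi)$ lands in $\operatorname{Aut}_k(S)$ and matches the identification $\operatorname{Ctrd}\simeq S$ compatibly — i.e. that conjugation by $\phi$ really corresponds, under $\chi_s\leftrightarrow s$, to applying a ring automorphism of $S$; here one uses that $S\to\operatorname{Ctrd}(L(\dg)_S)$, $s\mapsto\chi_s$, is an isomorphism of $k$-algebras (not just $k$-modules), together with the fact that $L(\dg)_S$ is perfect so its centroid is commutative. Since all of this is available from the Example preceding the statement and from \cite{ABP}, the argument is essentially a formal unwinding; accordingly I expect the authors simply cite \cite[Proposition 1]{P3}, which is exactly what the statement attributes it to.
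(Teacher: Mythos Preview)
Your argument is correct and is essentially the standard proof of this fact; as you anticipated, the paper itself gives no proof and simply refers to \cite[Proposition~1]{P3}.
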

\begin{corollary}\label{generators2}
One has
$$
{\rm Aut}_{k-Lie}(L(\dg)_S)=\langle\, {\rm Aut}_k\,(S),
 {\rm Aut}({\rm Dyn}(\dG)), \,U_{\alpha},\,
\alpha\in \Sigma,\, \overline{{\mathbf T}}(S)\,\rangle.
$$
\end{corollary}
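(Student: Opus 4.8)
The plan is to combine the two structural results just proved, namely Proposition~\ref{generatorsk-Lie} (equivalently Corollary~\ref{generators2}'s input) and Corollary~\ref{generators1}. By Proposition~\ref{generatorsk-Lie} we have a semidirect product decomposition
$$
{\rm Aut}_{k-Lie}(L(\dg)_S)\;\simeq\;{\rm Aut}_{S-Lie}(L(\dg)_S)\rtimes{\rm Aut}_k(S),
$$
so ${\rm Aut}_{k-Lie}(L(\dg)_S)$ is generated by any generating set of the normal subgroup ${\rm Aut}_{S-Lie}(L(\dg)_S)$ together with any generating set of a complement isomorphic to ${\rm Aut}_k(S)$. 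First I would invoke Corollary~\ref{generators1}, which already gives
$$
{\rm Aut}_{S-Lie}(L(\dg)_S)=\langle\,{\rm Aut}({\rm Dyn}(\dG)),\,U_\alpha\ (\alpha\in\Sigma),\,\overline{\mathbf T}(S)\,\rangle.
$$
The remaining point is to realize ${\rm Aut}_k(S)$ inside ${\rm Aut}_{k-Lie}(L(\dg)_S)$: via the identification ${\rm Ctrd}(L(\dg)_S)\simeq S$ from the Example, every $k$-algebra automorphism of $S$ induces one of the centroid, which by the proposition lifts (canonically, compatibly with the semidirect product structure) to a $k$-Lie automorphism of $L(\dg)_S$; these lifts form the desired complement. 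Adjoining them to the generators of ${\rm Aut}_{S-Lie}(L(\dg)_S)$ yields the asserted generating set.

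Concretely, to make the statement self-contained one should note that ${\rm Aut}_k(S)={\rm Aut}_k(k[t^{\pm 1/m}])$ acts on $S$ by $t^{1/m}\mapsto a\,t^{\pm 1/m}$ with $a\in k^\times$; these two families (the scaling $t^{1/m}\mapsto a\,t^{1/m}$ and the inversion $t^{1/m}\mapsto t^{-1/m}$, possibly combined) generate ${\rm Aut}_k(S)$. I would simply cite this elementary description, observe that the corresponding centroid automorphisms lift through the splitting in Proposition~\ref{generatorsk-Lie}, and conclude. Since the corollary as stated writes the generators as ${\rm Aut}_k(S)$ itself rather than an explicit finite list, even this unwinding is optional: the statement is the direct concatenation of the generators of the two factors of the semidirect product.

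There is essentially no obstacle here: the corollary is a formal consequence of the preceding Proposition~\ref{generatorsk-Lie} and Corollary~\ref{generators1}. The only thing to be slightly careful about is that the generators of ${\rm Aut}_{S-Lie}(L(\dg)_S)$ listed in Corollary~\ref{generators1} do generate it \emph{as a group} (not merely topologically or up to the Dynkin part), which is exactly what that corollary asserts, and that the copy of ${\rm Aut}_k(S)$ appearing in Proposition~\ref{generatorsk-Lie} is a genuine subgroup complement rather than merely a quotient — again this is part of the cited statement. Hence the proof is a one-line assembly, and I would present it as such.
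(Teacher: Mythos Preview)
Your proposal is correct and matches the paper's own proof, which is literally the one-line observation that the statement follows from Corollary~\ref{generators1} and Proposition~\ref{generatorsk-Lie}. The additional unwinding of ${\rm Aut}_k(S)$ you sketch is unnecessary (as you yourself note) but harmless.
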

\begin{proof} This follows from Corollary~\ref{generators1}
and Proposition~\ref{generatorsk-Lie}.
\end{proof}

\section{Automorphisms of $\widetilde{L}(\dg)_S$}


We remind the reader that the centre of $\widetilde{L}(\dg)_S$ is the
$k$-span of $c$
and that $\widetilde{L}(\dg)_S=L(\dg)_S\oplus kc$.
Since any automorphism $\phi$ of $\widetilde{L}(\dg)_S$ takes the centre into
itself we have a natural (projection)
mapping $$\mu:\widetilde{L}(\dg)_S \to \widetilde{L}(\dg)_S/kc\simeq
L(\dg)_S$$ which induces
the mapping $$\lambda: {\rm Aut}_{k-Lie}(\widetilde{L}(\dg)_S)\to
{\rm Aut}_{k-Lie}(L(\dg)_S)$$
given by $\phi\to \phi'$ where $\phi'(x)= \mu(\phi(x))$ for all $x\in L(\dg)_S$.
In the last formula we view $x$ as an element of $\widetilde{L}(\dg)_S$
through the embedding $L(\dg)_S\hookrightarrow \widetilde{L}(\dg)_S$.
\begin{remark}
It is straightforward to check
that $\phi'$ is indeed an automorphism of $L(\dg)_S$.
\end{remark}
\begin{proposition}\label{lambda}
The mapping $\lambda$
is an isomorphism.
\end{proposition}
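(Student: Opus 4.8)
The map $\lambda$ is visibly a group homomorphism, so it suffices to prove injectivity and surjectivity. For injectivity, suppose $\phi\in\operatorname{Aut}_{k-Lie}(\widetilde L(\dg)_S)$ lies in the kernel, i.e. $\mu(\phi(x))=x$ for all $x\in L(\dg)_S$. This means $\phi(x)=x+f(x)c$ for a $k$-linear form $f:L(\dg)_S\to k$, and $\phi(c)=\alpha c$ for some $\alpha\in k^\times$. Writing out $\phi([x,y])=[\phi(x),\phi(y)]$ and comparing the $L(\dg)_S$-components and the $c$-components, one gets that $f$ must annihilate $[L(\dg)_S,L(\dg)_S]=L(\dg)_S$ (since $\dg$ is perfect, $L(\dg)_S$ is a perfect $k$-Lie algebra). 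Hence $f=0$ and $\phi|_{L(\dg)_S}=\id$; a short separate check using the cocycle term $p\langle x,y\rangle\delta_{0,p+q}c$ forces $\alpha=1$, so $\phi=\id$. Thus $\lambda$ is injective.

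For surjectivity, by Corollary~\ref{generators2} it is enough to lift each generator of $\operatorname{Aut}_{k-Lie}(L(\dg)_S)$ to an automorphism of $\widetilde L(\dg)_S$; since $\lambda$ is a homomorphism, lifting the generators suffices to hit the whole group. The generators are: elements of $\operatorname{Aut}_k(S)$, the diagram automorphisms $\operatorname{Aut}(\operatorname{Dyn}(\dG))$, the root group elements $x_\alpha(u)$, and $\overline{\mathbf T}(S)$. For the root subgroups and for $\overline{\mathbf T}(S)$, these come from $\overline{\dG}(S)$ acting by inner automorphisms; such automorphisms preserve the Killing form fibrewise and hence preserve the $2$-cocycle defining $\widetilde L(\dg)_S$, so the naive formula $x\otimes t^{p/1}\mapsto (\text{image})$, $c\mapsto c$ extends to an automorphism of $\widetilde L(\dg)_S$ (one checks the cocycle identity is respected because $\langle gx,gy\rangle=\langle x,y\rangle$ for $g\in\overline{\dG}$). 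For a diagram automorphism $\sigma$, it preserves the Chevalley basis up to signs and fixes the Killing form, so again $x\mapsto\sigma(x)$, $c\mapsto c$ works. The genuinely new case is $\operatorname{Aut}_k(S)$: an automorphism of $S=k[t^{\pm1/m}]$... but here $m=1$ so $S=R=k[t^{\pm1}]$ wait — actually in this section $S$ is general; still, any $\tau\in\operatorname{Aut}_k(S)$ sends $s\mapsto a s^{\pm1}$ with $a\in k^\times$. The induced map on $L(\dg)_S=\dg\otimes_k S$ scales the grading, and one must adjust $c$ by the appropriate unit so that the cocycle term $p\langle x,y\rangle\delta_{0,p+q}c$ transforms correctly: if $\tau$ sends $t\mapsto at^{\pm1}$ then $t^p\mapsto a^p t^{\pm p}$ and the residue term pairing degree $p$ with degree $-p$ is multiplied by $a^0=1$ in the $s\mapsto a s$ case but the sign of $p$ flips in the inversion case, so one sets $\phi(c)=\pm c$ accordingly. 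A direct bracket computation confirms this defines a Lie automorphism of $\widetilde L(\dg)_S$ lifting $\tau$.

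**Main obstacle.** The routine parts are the homomorphism property and the lifting of the $\overline{\dG}(S)$-generators and diagram automorphisms, where invariance of the Killing form makes everything transparent. The delicate point is the $\operatorname{Aut}_k(S)$ generator: one has to track exactly how the central cocycle $p\langle x,y\rangle\delta_{0,p+q}$ behaves under $t\mapsto at^{\pm1}$ and verify that rescaling $c$ by a suitable element of $k^\times$ (and possibly a sign under $t\mapsto t^{-1}$) restores the bracket relations. This is the step where the extension from $L(\dg)_S$ to $\widetilde L(\dg)_S$ genuinely uses the structure of the central extension rather than being formal, and it is where I would concentrate the written proof. Injectivity is then the easy bookend: perfectness of $L(\dg)_S$ kills the linear form $f$, and the cocycle term pins down the scalar on $c$.
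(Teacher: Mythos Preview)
Your injectivity argument is correct (the key computation being that $f([x,y]_{L(\dg)_S})=(1-\alpha)\,\omega(x,y)$, and then varying the degree in $x=X\otimes s^{p}$, $y=Y\otimes s^{-p}$ with $\langle X,Y\rangle\neq 0$ forces $\alpha=1$, after which perfectness of $L(\dg)_S$ gives $f=0$). The paper itself does not give a proof here; it simply cites \cite{P3}.

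Your surjectivity argument, however, has a genuine gap in the treatment of $\overline{\dG}(S)$---and the same gap applies to the root-group elements $x_{\alpha}(u)$ with non-constant $u\in S$. You assert that since $g\in\overline{\dG}(S)$ preserves the $S$-valued Killing form, it preserves the $2$-cocycle, so the naive extension $l\mapsto g\cdot l$, $c\mapsto c$ is a Lie automorphism of $\widetilde L(\dg)_S$. This inference fails: the cocycle is not the Killing form but rather $\omega(f_1,f_2)=\operatorname{Res}_{s=0}\bigl\langle \tfrac{df_1}{ds},f_2\bigr\rangle_S\,ds$, and the $s$-derivative does not commute with the action of a non-constant $g$. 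Concretely, for $\dg=\mathfrak{sl}_2$ and $g=\diag(s,s^{-1})\in\overline{{\mathbf T}}(S)$ one has $\omega(e\otimes 1,f\otimes 1)=0$ while $\omega(g\cdot(e\otimes 1),\,g\cdot(f\otimes 1))=\omega(e\otimes s^{2},f\otimes s^{-2})=2\langle e,f\rangle\neq 0$. That the naive lift is wrong is confirmed by Remark~\ref{lift} immediately after the proposition: the correct lift of a cocharacter $\phi\in\Hom(Q,\mathbb Z)\subset\overline{{\mathbf T}}(S)$ sends $H_{\alpha}\mapsto H_{\alpha}+\phi(\alpha)\langle X_{\alpha},X_{-\alpha}\rangle\,c$, not $H_{\alpha}\mapsto H_{\alpha}$. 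Ironically, the case you flagged as ``delicate'' ($\operatorname{Aut}_k(S)$, where the naive lift with $c\mapsto\pm c$ does work) is routine, while the case you treated as routine is the one requiring care.

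The repair is straightforward once noticed. For $x_{\alpha}(u)$, lift via the exponential $\exp\bigl(\operatorname{ad}_{\widetilde L(\dg)_S}(uX_{\alpha})\bigr)$ taken \emph{inside} $\widetilde L(\dg)_S$: since $uX_{\alpha}$ is ad-nilpotent there, this is automatically a Lie automorphism projecting to $x_{\alpha}(u)$ under $\lambda$. For the cocharacter part of $\overline{{\mathbf T}}(S)$, verify directly that the formula in Remark~\ref{lift} defines a Lie automorphism (the paper asserts this is ``straightforward to check''). Elements of $\overline{{\mathbf T}}(k)$ lie in the image of ${\mathbf T}(k)$ and are products of unipotents, so they are covered by the exponential lifts. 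Alternatively---and this is very likely the content of the cited result in \cite{P3}---one can bypass generators entirely by invoking that $\widetilde L(\dg)_S$ is the universal central extension of the perfect Lie algebra $L(\dg)_S$; the universal property then yields $\operatorname{Aut}_{k\text{-}Lie}(\widetilde L(\dg)_S)\simeq\operatorname{Aut}_{k\text{-}Lie}(L(\dg)_S)$ in one stroke.
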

\begin{proof} See \cite[Proposition $4$]{P3}.
\end{proof}

In what follows if
$\phi \in {\rm Aut}_{k-Lie}(L(\dg)_S)$ we  denote its (unique) lifting to
${\rm Aut}_{k-Lie}(\widetilde{L}(\dg)_S)$ by $\tilde{\phi}$.

\begin{remark}\label{lift}
For later use we need an explicit formula for lifts of
automorphisms of $L(\dg)_S$ induced by some ``special'' points
in $\overline{{\mathbf T}}(S)$ (those
which are
not in the image of ${\mathbf T}(S)\to \overline{{\mathbf T}}(S)$).
More precisely,
choose the decomposition $\overline{{\mathbf T}}\simeq \dG_{m,S}\times \cdots
\times \dG_{m,S}$ such that the canonical embedding $\dG_{m,S}\to
\overline{{\mathbf T}}$
into the $i$-th factor is the cocharacter
of $\overline{{\mathbf T}}$ dual to $\alpha_i$.
As usual, we have the decomposition
$\overline{{\mathbf T}}(S)\simeq \overline{{\mathbf T}}(k)\times
{\rm Hom}\,(\dG_m,\overline{{\mathbf T}})$.
The second factor in the last decomposition is the cocharacter lattice
of $\overline{{\mathbf T}}$ and its elements correspond (under the adjoint action)
to the subgroup in
$ {\rm Aut}_{S-Lie}(L(\dg)_S)$ isomorphic to ${\rm Hom}(Q,\mathbb{Z})$
where $Q$ is the corresponding root lattice:
if $\phi\in {\rm Hom}(Q,\mathbb{Z})$
it induces an $S$-automorphism of $L(\dg)_S$ (still denoted by $\phi$)
given by
$$X_{\alpha}\to X_{\alpha}\otimes s^{\phi(\alpha)},\ \
H_{\alpha_i}\to H_{\alpha_i}.$$
It is straightforward to check the mapping $\tilde{\phi}:\widetilde{L}(\dg)_S
\to \widetilde{L}(\dg)_S$ given by
$$
H_{\alpha}\to H_{\alpha}+\phi(\alpha)\langle X_{\alpha},X_{-\alpha}\rangle
\cdot c,\ \
H_{\alpha}\otimes s^p\to H_{\alpha}\otimes s^{p}
$$
if $p\not=0$ and
$$
X_{\alpha}\otimes s^p \to X_{\alpha}\otimes
s^{p+\phi(\alpha)}
$$
is an automorphism of $\widetilde{L}(\dg)_S$, hence it is the (unique)
lift of $\phi$.
\end{remark}

\section{Automorphisms of split affine Kac-Moody Lie algebras}\label{splitcase}

Since $\widetilde{L}(\dg)_S=[\widehat{L}(\dg)_S,\widehat{L}(\dg)_S]$
we have a natural (restriction) mapping
$$
\tau: {\rm Aut}_{k-Lie}\,(\widehat{L}(\dg)_S)\to
{\rm Aut}_{k-Lie}\,(\widetilde{L}(\dg)_S).$$
\begin{proposition}\label{surjectivity}
The mapping $\tau$ is surjective.
\end{proposition}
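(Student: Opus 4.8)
The plan is to construct, for every $\psi \in {\rm Aut}_{k-Lie}(\widetilde{L}(\dg)_S)$, an explicit lift $\widehat\psi \in {\rm Aut}_{k-Lie}(\widehat{L}(\dg)_S)$ whose restriction to $\widetilde{L}(\dg)_S = [\widehat{L}(\dg)_S,\widehat{L}(\dg)_S]$ equals $\psi$. The key point is that $\widehat{L}(\dg)_S = \widetilde{L}(\dg)_S \oplus kd$ as a vector space, so defining $\widehat\psi$ amounts to prescribing $\widehat\psi(d)$ and checking that the resulting linear map respects all brackets. Since $\psi$ is already fixed on $\widetilde{L}(\dg)_S$, the only freedom is $\widehat\psi(d) = \delta + \beta c + \gamma d$ for some $\delta \in L(\dg)_S$ and scalars $\beta,\gamma \in k$, and the constraint is that $[\widehat\psi(d), \psi(x)]_{\widehat L} = \psi([d,x]_{\widehat L})$ for all $x \in \widetilde{L}(\dg)_S$.

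First I would reduce to generators. By Corollary~\ref{generators2} and Proposition~\ref{lambda}, the group ${\rm Aut}_{k-Lie}(\widetilde{L}(\dg)_S)$ is generated (via $\lambda^{-1}$) by the lifts $\widetilde{\phi}$ of the generators of ${\rm Aut}_{k-Lie}(L(\dg)_S)$, namely elements of ${\rm Aut}_k(S)$, diagram automorphisms, the root subgroups $U_\alpha$, and $\overline{\bf T}(S)$. Since a composite of liftable automorphisms is liftable (the lift of a composite being the composite of lifts, once uniqueness on $\widetilde L(\dg)_S$ forces compatibility — though here on $\widehat L$ we must be mildly careful, as the lift to $\widehat L$ need not be unique), it suffices to lift each generator. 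For the ``geometric'' generators coming from $\overline{\dG}(S)$ and diagram automorphisms, the automorphism is induced by an honest automorphism of the group scheme or of $\dg$, hence acts naturally on $L(\dg)_S$ preserving the grading up to the relevant shift, and one lifts it by letting it fix $d$ (for diagram automorphisms and $k$-points of $\overline{\bf T}$, and for $U_\alpha(S)$ after checking the exponential series interacts correctly with $[d,-]$) — the subtlety is exactly the ``special'' cocharacter points of $\overline{\bf T}(S)$ described in Remark~\ref{lift}, where the lift to $\widetilde L$ already modifies $H_\alpha$ by a multiple of $c$; there one must also modify $\widehat\psi(d)$, guided by the shift $X_\alpha \otimes s^p \mapsto X_\alpha \otimes s^{p+\phi(\alpha)}$, by adding a suitable element of $\dh \otimes 1$ so that the degree-derivation relation \eqref{newbracket} is preserved. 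For ${\rm Aut}_k(S)$, an automorphism $s \mapsto a s^{\pm 1}$ ($a \in k^\times$) rescales the grading, and the correct lift sends $d \mapsto \pm d$ plus possibly a central correction.

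The main obstacle I expect is the bookkeeping for the $\overline{\bf T}(S)$-part and the ${\rm Aut}_k(S)$-part simultaneously: one must verify that the prescribed value of $\widehat\psi(d)$ is consistent with \emph{every} bracket relation in $\widehat L(\dg)_S$, in particular the cocycle term $p\langle x,y\rangle \delta_{0,p+q} c$, and that the central corrections bundled into $\widetilde\phi$ in Remark~\ref{lift} are precisely compensated. Concretely, if $\phi \in {\rm Hom}(Q,\bbZ)$ acts by $X_\alpha \otimes s^p \mapsto X_\alpha \otimes s^{p+\phi(\alpha)}$, then $[d, X_\alpha \otimes s^p] = p\, X_\alpha \otimes s^p$ must map under $\widehat\phi$ to $[\widehat\phi(d), X_\alpha \otimes s^{p+\phi(\alpha)}]$, which forces $\widehat\phi(d) = d - h_\phi$ for a unique $h_\phi \in \dh$ with $\alpha(h_\phi) = \phi(\alpha)$, and one then checks this same $h_\phi$ makes the $H_\alpha \mapsto H_\alpha + \phi(\alpha)\langle X_\alpha, X_{-\alpha}\rangle c$ relation consistent with $[d, H_\alpha] = 0$ and with the central term. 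Once each generator is lifted, I would invoke the presentation to conclude that $\tau$ hits a generating set of ${\rm Aut}_{k-Lie}(\widetilde{L}(\dg)_S)$; but since subgroups generated by liftable elements need not obviously be all liftable when lifts to $\widehat L$ are non-unique, the cleanest finish is to observe that the set of $\psi$ admitting a lift is closed under composition and inversion (choosing lifts coherently), hence equals the whole group. This completes the proof that $\tau$ is surjective.
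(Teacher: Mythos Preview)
Your approach is essentially the same as the paper's: reduce to the generating set of Corollary~\ref{generators2} via the isomorphism $\lambda$, and lift each type of generator explicitly by prescribing $\widehat\phi(d)$ --- fixing $d$ for diagram automorphisms, using the exponential for $U_\alpha$, sending $d\mapsto d - h_\phi$ for the cocharacter part of $\overline{\bf T}(S)$ (the paper writes this as $d\to d-X$ with $[X,X_\alpha]=\phi(\alpha)X_\alpha$), and $d\mapsto \pm d$ for ${\rm Aut}_k(S)$. Your remark that the set of liftable automorphisms is closed under composition and inversion is the clean way to finish, and your identification of the cocharacter case as the delicate one matches the paper's emphasis on Remark~\ref{lift}; one small wording issue is that for $U_\alpha(S)$ the lift via the exponential does \emph{not} fix $d$ when $u\in S$ is non-constant, so be careful with the parenthetical ``letting it fix $d$'' there.
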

\begin{proof}
By Proposition~\ref{lambda} and Corollary~\ref{generators2}
the group ${\rm Aut}_{k-Lie}(\widetilde{L}(\dg)_S)$
has the distinguished system of generators $\{\,\tilde{\phi}\,\}$ where
$$\phi\in {\rm Aut}({\rm Dyn}(\dG)),\,
\overline{{\mathbf T}}(S),\,{\rm Aut}_k(S),\,
U_{\alpha}.$$ We want
to construct a mapping $\hat{\phi}:\widehat{L}(\dg)_S\to \widehat{L}(\dg)_S$
which preserves the identity
$$
[d,x\otimes t^{\frac{p}{m}}]_{\widehat{L}}=p\,x\otimes t^{\frac{p}{m}}
$$
for all $x\in \dg$ and whose restriction to $\widetilde{L}(\dg)_S$
coincides with $\tilde{\phi}$. These two properties would imply that
$\hat{\phi}$ is an automorphism of $\widehat{L}(\dg)_S$
lifting $\tilde{\phi}$.

If $\phi\in U_{\alpha}$ is unipotent we
define $\hat{\phi}$, as usual, through the
exponential map. If $\phi\in {\rm Aut}({\rm Dyn}(G))$ we put
$\hat{\phi}(d)=d$. If $\phi$ is as in Remark~\ref{lift}
we extend it by $d\to d-X$ where $X\in \dh$ is the unique element such that
$[X,X_{\alpha}]=\phi(\alpha)X_{\alpha}$ for all roots $\alpha\in \Sigma$.
Note that automorphisms of $L(\dg)_S$ given by points in
$\overline{{\mathbf T}}(k)$
are in the image of ${\mathbf T}(k)\to \overline{{\mathbf T}}(k)$ and
hence they are generated
by unipotent elements.
Lastly, if $\phi\in {\rm Aut}_k\,(S)$  is of the form $s\to
as^{-1}$ where $a\in k^{\times}$ (resp. $s\to as$) we extend
$\tilde{\phi}$ by
$\hat{\phi}(d)=-d$ (resp. $\hat{\phi}(d)=d$).
We leave it to the reader
to verify that in all cases $\hat{\phi}$ preserves the above identity and
hence $\hat{\phi}$
is an automorphism of $\widehat{L}(\dg)_S$.
\end{proof}

\begin{proposition}\label{subgroupV} One has ${\rm Ker}\,\tau\simeq V$ where
$V={\rm Hom}_k(kd,kc)$.
\end{proposition}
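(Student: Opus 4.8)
The plan is to identify $\Ker\tau$ explicitly with the space of linear maps $kd\to kc$, viewed as ``perturbations of the derivation by a central element.'' First I would observe that an element $\psi\in\Ker\tau$ is by definition an automorphism of $\widehat{L}(\dg)_S=\widetilde{L}(\dg)_S\oplus kd$ whose restriction to the derived subalgebra $\widetilde{L}(\dg)_S$ is the identity. Thus $\psi$ fixes every element of $\widetilde{L}(\dg)_S$, and it only remains to understand the possible values $\psi(d)$. Writing $\psi(d)=d'+\ell$ with $\ell\in\widetilde{L}(\dg)_S$ and $d'$ the component along $kd$, I would first pin down $d'$: applying $\psi$ to the bracket relation $[d,x\otimes t^{p/m}]=p\,x\otimes t^{p/m}$ and using that $\psi$ fixes $x\otimes t^{p/m}$, one gets $[\psi(d),x\otimes t^{p/m}]=p\,x\otimes t^{p/m}$ for all $x,p$. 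Since the central element $c$ brackets trivially, this forces the $\widetilde{L}(\dg)_S$-component of $\psi(d)$ to act on each graded piece exactly as $p\cdot(\text{id})$, i.e. $\psi(d)-d$ must be central, so $\psi(d)=d+\beta c$ for some scalar $\beta\in k$. Conversely, for any $\beta\in k$ the map fixing $\widetilde{L}(\dg)_S$ pointwise and sending $d\mapsto d+\beta c$ is readily checked to respect all brackets (the only relations involving $d$ are the degree relations, and adding a central element changes nothing since $[c,-]=0$), hence is an automorphism in $\Ker\tau$.

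This sets up a bijection $\Ker\tau\xrightarrow{\sim}k$, $\psi\mapsto\beta$, and I would then note it is a group isomorphism: composing $\psi_\beta$ and $\psi_{\beta'}$ sends $d\mapsto d+(\beta+\beta')c$ (again because $\psi_{\beta'}$ fixes $c$), so the group law on the left matches addition on the right. Finally, to get the stated identification with $V=\Hom_k(kd,kc)$ rather than just with $k$, I would phrase the scalar $\beta$ invariantly: the assignment $d\mapsto\beta c$ is precisely a $k$-linear map $kd\to kc$, and the whole of $\psi_\beta$ is recovered from this datum together with the requirement that $\psi_\beta|_{\widetilde{L}(\dg)_S}=\id$. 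Thus $\psi\mapsto(d\mapsto \psi(d)-d)$ gives the desired isomorphism $\Ker\tau\simeq\Hom_k(kd,kc)$.

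The only genuine point requiring care—the ``main obstacle,'' such as it is—is the first step: showing that an automorphism fixing the derived algebra pointwise cannot move $d$ outside the coset $d+kc$. The subtlety is that a priori $\psi(d)$ could have a nonzero component in $L(\dg)_S$, not merely in $kc$; ruling this out is exactly what the degree relation $[\psi(d),x\otimes t^{p/m}]=p\,x\otimes t^{p/m}$ accomplishes, since an element $y\in L(\dg)_S$ acting by $\ad y$ on all of $L(\dg)_S$ in the same way as $d$ would have to satisfy $\ad(y-d)=0$ on $\widetilde{L}(\dg)_S/kc\simeq L(\dg)_S$, and the centralizer of $L(\dg)_S$ in $L(\dg)_S$ is zero because $L(\dg)_S$ is a (centerless) simple Lie algebra over $S$. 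Once that is in hand, everything else is a routine verification that the obvious maps are mutually inverse group homomorphisms, and I would leave those checks to the reader.
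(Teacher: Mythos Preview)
Your proof is correct and follows essentially the same approach as the paper: both show that $\psi(d)-d$ must lie in $kc$ by exploiting the bracket relations $[d,x\otimes t^{p/m}]=p\,x\otimes t^{p/m}$, and both verify conversely that $d\mapsto d+\beta c$ defines an automorphism. The paper carries this out in two explicit substeps (first $[d,X_\alpha]=0$ forces the $L(\dg)_S$-component to vanish, then $[d,X_\alpha\otimes t^{1/m}]=X_\alpha\otimes t^{1/m}$ forces the $d$-coefficient to be $1$), whereas you fold both into a single centralizer statement; note only that to make your phrasing airtight you need the centralizer of $L(\dg)_S$ in $\widehat{L}(\dg)_S$ (not just in $L(\dg)_S$) to be $kc$, which also uses that $d$ itself does not centralize $L(\dg)_S$.
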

\begin{proof} We first embed $V\hookrightarrow
{\rm Aut}_{k-Lie}(\widehat{L}(\dg)_S)$.
Let $v\in V$. Recall that any element $x\in\widehat{L}(\dg)_S$ can be written
uniquely
in the form $x=x'+ad$ where $x'\in\widetilde{L}(\dg)_S$ and $a\in k$.
We define  $\hat{v}:\widehat{L}(\dg)_S
\to \widehat{L}(\dg)_S$ by $x\to x+v(ad)$. One checks that $\hat{v}$ is an
automorphism
of $\widehat{L}(\dg)_S$ and thus the required embedding
is given by $v\to \hat{v}$.

Since $\hat{v}(x')=x'$ for all
$x'\in \widetilde{L}$ we have
$\hat{v}\in {\rm Ker}\,\tau$.
Conversely, let $\psi\in {\rm Ker}\,\tau$. Then $\psi(x)=x$ for all $x\in
\widetilde{L}(\dg)_S$.
We need to show that  $\psi(d)=ac+d$ where $a\in k$.
Let $\psi(d)=x'+ac+bd$ where $a,b\in k$ and $x'\in L(\dg)_S$.
Since $[d,X_{\alpha}]_{\widehat{L}(\dg)_S}=0$ we get
$$[\psi(d),\psi(X_{\alpha})]_{\widehat{L}(\dg)_S}=0.$$ Substituting
$\psi(d)=x'+ac+bd$ we obtain $$[x'+ac+bd,X_{\alpha}]_{\widehat{L}(\dg)_S}=0$$ or
$[x',X_{\alpha}]_{\widetilde{L}(\dg)_S}=0$.
Since this is true for all roots $\alpha\in \Sigma$, the element $x'$ commutes
with $\dg$ and this can happen if and only if $x'=0$.

It remains to show that $b=1$. To see this we can argue similarly by considering
the equality
$$[d,X_{\alpha}\otimes t^{\frac{1}{m}}]_{\widehat{L}(\dg)_S}=X_{\alpha}\otimes
t^{\frac{1}{m}}$$
and applying $\psi$.
\end{proof}
\begin{corollary}\label{mainsequence}
The  sequence of groups
\begin{equation}\label{mainsequence1}
1\longrightarrow V \longrightarrow {\rm Aut}_{k-Lie}\,(\widehat{L}(\dg)_S)
\stackrel{\lambda \circ \tau}{\longrightarrow} {\rm Aut}_{k-Lie}\,(L(\dg)_S)
\longrightarrow 1
\end{equation}
is exact.
\end{corollary}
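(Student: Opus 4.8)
The plan is to assemble the exact sequence from the three ingredients already in hand: Proposition~\ref{surjectivity} (surjectivity of $\tau$), Proposition~\ref{subgroupV} (identification of $\Ker\tau$ with $V$), and Proposition~\ref{lambda} (the map $\lambda$ is an isomorphism). First I would note that $\lambda\circ\tau$ is a composite of a surjection with an isomorphism, hence surjective, which gives exactness at ${\rm Aut}_{k-Lie}(L(\dg)_S)$. Exactness at $V$ is just the injectivity of the embedding $v\mapsto\hat v$ constructed in the proof of Proposition~\ref{subgroupV}, which is clear since $\hat v(d)=d+v(d)$ determines $v$.

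The only point requiring a word of argument is exactness in the middle, i.e.\ $\Ker(\lambda\circ\tau)=V$. Since $\lambda$ is an isomorphism (in particular injective), $\Ker(\lambda\circ\tau)=\Ker\tau$, and this equals $V$ by Proposition~\ref{subgroupV}. So the corollary is essentially a formal consequence of splicing together the preceding three results; there is no real obstacle, and the proof is a two- or three-line bookkeeping argument.

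\begin{proof}
By Proposition~\ref{surjectivity} the restriction map $\tau$ is surjective, and by Proposition~\ref{lambda} the map $\lambda$ is an isomorphism; hence the composite $\lambda\circ\tau$ is surjective, which gives exactness of (\ref{mainsequence1}) at the right-hand term. Since $\lambda$ is an isomorphism, it is in particular injective, so $\Ker(\lambda\circ\tau)=\Ker\tau$. By Proposition~\ref{subgroupV} the embedding $v\mapsto\hat v$ identifies $V={\rm Hom}_k(kd,kc)$ with $\Ker\tau$; this embedding is injective because $\hat v(d)=d+v(d)$ recovers $v$ from $\hat v$. Therefore the sequence (\ref{mainsequence1}) is exact at $V$ and in the middle as well.
\end{proof}
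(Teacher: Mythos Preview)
Your proof is correct and matches the paper's approach: the corollary is stated without proof precisely because it follows immediately from Propositions~\ref{surjectivity}, \ref{subgroupV}, and \ref{lambda}, exactly as you assemble them.
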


\section{Automorphism group of twisted affine Kac-Moody Lie algebras}


We keep the notation introduced in \S\,\ref{affinerealization}. In particular,
we fix an integer $m$ and a primitive root of unity $\zeta=\zeta_m\in k$ of
degree $m$.
Consider the $k$-automorphism $\zeta^{\times}: S \to S$ such that
$s\to \zeta s$ which we view as a $k$-automorphism of $L(\dg)_S$
through the embedding
$$
{\rm Aut}_k\,(S)\hookrightarrow
{\rm Aut}_{k-Lie}\,(L(\dg)_S)\simeq
{\rm Aut}_{S-Lie}\,(L(\dg)_S)\rtimes {\rm Aut}_k\,(S)
$$
(see Proposition~\ref{generatorsk-Lie}). As it is explained
in \S\,\ref{splitcase}
we then get the automorphism $\widehat{\zeta}^{\times}$ (resp.
$\widetilde{\zeta}^{\times}$)
of $\widehat{L}(\dg)_S$ (resp. $\widetilde{L}(\dg)_S$)
given by
$$
x\otimes s^i +ac+bd \longrightarrow x\otimes \zeta^i s^i+ac+bd
$$
where $a,b\in k$ and $x\in \dg$.

Consider now the abstract group $\Gamma=\mathbb{Z}/m\mathbb{Z}$ (which
can be identified with ${\rm Gal}\,(S/R)$ as already explained)
and define its action
on $\widehat{L}(\dg)_S$ (resp. $\widetilde{L}(\dg)_S,\, L(\dg)_S$)
with the use of
$\widehat{\zeta}^{\times}$ (resp. $\widetilde{\zeta}^{\times},\,
\zeta^{\times}$). More precisely,
for every $l\in\widehat{L}(\dg)_S$ we let
$\overline{i}(l):=(\widehat{\zeta}^{\times})^i(l)$. Similarly, we define
the action of $\Gamma$ on ${\rm Aut}_{k-Lie}\,(\widehat{L}(\dg)_S)$ by
$$
\overline{i}: {\rm Aut}_{k-Lie}\,(\widehat{L}(\dg)_S) \longrightarrow
{\rm Aut}_{k-Lie}\,(\widehat{L}(\dg)_S), \ \ x \to
(\widehat{\zeta}^{\times})^ix
(\widehat{\zeta}^{\times})^{-i}.
$$
Therefore, ${\rm Aut}_{k-Lie}\,(\widehat{L}(\dg)_S)$ can be viewed as
a $\Gamma$-set. Along the same lines one defines the action of $\Gamma$
on ${\rm Aut}_{k-Lie}\,(L(\dg)_S)$ and ${\rm Aut}_{S-Lie}(L(\dg)_S)$
with the use of
$\zeta^{\times}$.
It is easy to see that
$\Gamma$ acts trivially on the subgroup
$V\subset {\rm Aut}_{k-Lie}\,(\widehat{L}(\dg)_S)$
introduced in Proposition~\ref{subgroupV}.
Thus, $(\ref{mainsequence1})$ can be viewed as an exact
sequence of $\Gamma$-groups.

We next choose an element $\pi \in {\rm Aut}({\rm Dyn}(\dG))\subset
{\rm Aut}_k(\dg)$ of order $m$ (clearly, $m$ can take value $1,2$ or $3$
only).
Like before, we have the corresponding automorphism $\hat{\pi}$
of $\widehat{L}(\dg)_S$ given by
$$
x\otimes s^i +ac+bd \longrightarrow \pi(x)\otimes s^i+ac+bd
$$
where $a,b\in k$ and $x\in \dg$.

Note that $\widehat{\zeta}^{\times} \hat{\pi}=\hat{\pi}
\widehat{\zeta}^{\times}$.
It then easily follows that the assignment
$$
\overline{1} \to z_{\overline{1}}=\hat{\pi}^{-1}\in {\rm Aut}_{k-Lie}\,
(\widehat{L}(\dg)_S)
$$
gives rise to a cocycle  $z=(z_{\overline{i}})\in
Z^1(\Gamma,{\rm Aut}_{k-Lie}\,(\widehat{L}(\dg)_S))$.

This cocycle, in turn,
gives rise to a (new) twisted action of $\Gamma$ on $\widehat{L}(\dg)_S$ and
${\rm Aut}_{k-Lie}\,(\widehat{L}(\dg)_S)$.
Analogous considerations (with the use of $\pi$)
are applied to ${\rm Aut}_{k-Lie}\,(L(\dg)_S)$ and $L(\dg)_S$.
For  future reference note that $\hat{\pi}$ commutes with elements in $V$,
hence the twisted action of $\Gamma$ on $V$ is still trivial.
From now on we view
(\ref{mainsequence1}) as an exact sequence of $\Gamma$-groups, the action of $\Gamma$
being the twisted action.

\begin{remark}\label{twistedform}
As we noticed before the invariant subalgebra
$$
\mathcal{L}=L(\dg,\pi)=(L(\dg)_S)^{\Gamma}=
((\dg\otimes_kR)\otimes_R S)^{\Gamma}
$$
is a simple Lie algebra over $R$, a twisted form of a split Lie algebra
$\dg\otimes_k R$.
The same cohomological formalism also yields that
\begin{equation}\label{lifting}
{\rm Aut}_{R-Lie}\,(\mathcal{L})\simeq ({\rm Aut}_{S-Lie}\,(L(\dg)_S))^{\Gamma}.
\end{equation}
\end{remark}
\begin{remark}\label{worth} It is worth mentioning that 
the canonical embedding
$$
\begin{array}{lll}
\iota: ({\rm Aut}_{k-Lie}\,(L(\dg)_S))^{\Gamma} &\hookrightarrow &
{\rm Aut}_{k-Lie}\,((L(\dg)_S)^{\Gamma}) =
{\rm Aut}_{k-Lie}\,(\mathcal{L})
\simeq \\
& & {\rm Aut}_{R-Lie}\,(\mathcal{L})\rtimes {\rm Aut}_k\,(R),
\end{array}
$$
where the last isomorphism can be established
in the same way as in Proposition~\ref{generatorsk-Lie},
is not necessary surjective in general case. Indeed, one checks that if $m=3$
then the $k$-automorphism of $R$  given by 
$t\to t^{-1}$ and viewed as an element of 
${\rm Aut}_{k-Lie}\,(\mathcal{L})
\simeq
{\rm Aut}_{R-Lie}\,(\mathcal{L})\rtimes {\rm Aut}_k\,(R)$ is not in ${\rm Im}\,\iota$.
However (\ref{lifting}) implies that
the group ${\rm Aut}_{R-Lie}\,(\mathcal{L})$ is in the image of $\iota$.
\end{remark}
\begin{remark} The $k$-Lie algebra $\widehat{\mathcal{L}}=
(\widehat{L}(\dg)_S)^{\Gamma}$
is a twisted affine Kac--Moody Lie algebra. Conversely,
by the Realization Theorem 
every twisted affine Kac--Moody Lie algebra can be obtained in such a way.
\end{remark}
\begin{lemma}\label{cohomologyV} One has $H^1(\Gamma,V)=1$.
\end{lemma}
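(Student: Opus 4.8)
The plan is to reduce the statement to the standard computation of the cohomology of a finite cyclic group with trivial coefficients. First I would record the two relevant structural facts about $V$. On the one hand, $V = {\rm Hom}_k(kd,kc)$ is a one–dimensional $k$–vector space. On the other hand, as already noted in the paragraph preceding the lemma, both $\widehat{\pi}$ and $\widehat{\zeta}^{\times}$ fix $c$ and $d$, hence commute with every $\widehat{v}$ for $v\in V$; therefore the twisted action of $\Gamma = \mathbb{Z}/m\mathbb{Z}$ on $V$ is trivial.

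Now $\Gamma$ is cyclic of order $m$ with the distinguished generator $\overline{1}$, acting trivially on the abelian group $V$. The standard description of the cohomology of a finite cyclic group then gives
$$
H^1(\Gamma,V)\;\simeq\;\Ker\bigl(N\colon V\to V\bigr)\big/(\overline{1}-\id)\,V ,
$$
where $N=\sum_{i=0}^{m-1}\overline{i}$ is the norm endomorphism. Since the action is trivial, $(\overline{1}-\id)\,V=0$ and $N$ is multiplication by $m$; thus $H^1(\Gamma,V)$ is the $m$–torsion subgroup $V[m]$. But $V$ is a $k$–vector space and $\operatorname{char} k = 0$, so multiplication by $m$ is injective on $V$ and $V[m]=0$. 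Hence $H^1(\Gamma,V)=1$. (Alternatively, $H^1(\Gamma,V)$ is annihilated by $m=|\Gamma|$ while also being a $\mathbb{Q}$–vector space, so it vanishes.)

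There is no real obstacle here; the only point requiring attention is the verification that the twisted $\Gamma$–action on $V$ is trivial. This is exactly the observation recorded just before the lemma, and it hinges on $\pi$ having been chosen inside ${\rm Aut}({\rm Dyn}(\dG))$ — so that $\widehat{\pi}$ fixes $d$ — together with the defining formula $x\otimes s^i+ac+bd\mapsto x\otimes\zeta^i s^i+ac+bd$ for $\widehat{\zeta}^{\times}$, which fixes both $c$ and $d$.
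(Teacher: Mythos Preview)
Your proof is correct and follows essentially the same approach as the paper: both use that $\Gamma$ is cyclic of order $m$ acting trivially on $V\simeq k$, and conclude by observing that the $m$-torsion of a vector space over a field of characteristic~$0$ is zero. The only cosmetic difference is that the paper computes the cocycles directly, obtaining $Z^1(\Gamma,V)=\{x\in k\mid mx=0\}=0$ (so already $H^1=0$), whereas you invoke the standard formula $H^1(\Gamma,V)\simeq \Ker(N)/(\overline{1}-\id)V$; the substance is identical.
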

\begin{proof} Since $\Gamma$ is cyclic of order $m$ acting trivially
on $V\simeq k$ it follows that
$$
Z^1(\Gamma,V)=\{\,x\in k\ | \ mx=0\,\}=0
$$
as required.
\end{proof}

The long exact cohomological sequence associated to (\ref{mainsequence1})
together with Lemma~\ref{cohomologyV} imply the following.
\begin{theorem}\label{mainsequencetwist} The following sequence
$$
1 \longrightarrow V \longrightarrow ({\rm Aut}_{k-Lie}\,
(\widehat{L}(\dg)_S))^{\Gamma}
\stackrel{\nu}{\longrightarrow} ({\rm Aut}_{k-Lie}\,
(L(\dg)_S))^{\Gamma} \longrightarrow 1
$$
is exact. In particular, the group ${\rm Aut}_{R-Lie}\,(\mathcal{L})$
is in the image of the canonical mapping
$$
{\rm Aut}_{k-Lie}\,(\widehat{\mathcal{L}}) \longrightarrow {\rm Aut}_{k-Lie}
\,(\mathcal{L})
\simeq {\rm Aut}_{R-Lie}\,(\mathcal{L})\rtimes {\rm Aut}_k\,(R).$$
\end{theorem}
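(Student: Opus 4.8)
The plan is to deduce the theorem from the long exact cohomology sequence of nonabelian groups associated to the short exact sequence of $\Gamma$-groups in (\ref{mainsequence1}), exactly as announced. First I would recall that $V$ is a central (indeed, it commutes with everything in $\widehat{L}(\dg)_S$, being valued in the central line $kc$), normal subgroup of ${\rm Aut}_{k-Lie}(\widehat{L}(\dg)_S)$, and that $\Gamma$ acts trivially on it, as noted before the statement of the theorem. Hence (\ref{mainsequence1}) really is a short exact sequence of $\Gamma$-groups with abelian kernel $V$, and the general machinery of nonabelian Galois cohomology (Serre, \emph{Galois Cohomology}, I.5.5) applies: taking $\Gamma$-invariants produces the exact sequence of pointed sets
$$
1 \longrightarrow V^{\Gamma} \longrightarrow ({\rm Aut}_{k-Lie}(\widehat{L}(\dg)_S))^{\Gamma} \stackrel{\nu}{\longrightarrow} ({\rm Aut}_{k-Lie}(L(\dg)_S))^{\Gamma} \stackrel{\delta}{\longrightarrow} H^1(\Gamma,V).
$$
Since $\Gamma$ acts trivially on $V$ we have $V^{\Gamma}=V$, and by Lemma~\ref{cohomologyV} we have $H^1(\Gamma,V)=1$. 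Therefore the connecting map $\delta$ is trivial, so $\nu$ is surjective, and exactness at the middle term combined with injectivity of $V\hookrightarrow ({\rm Aut}_{k-Lie}(\widehat{L}(\dg)_S))^{\Gamma}$ gives the displayed four-term exact sequence of groups.

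Next I would address the ``in particular'' clause. By Remark~\ref{twistedform} (equation (\ref{lifting})) we have ${\rm Aut}_{R-Lie}(\mathcal{L})\simeq ({\rm Aut}_{S-Lie}(L(\dg)_S))^{\Gamma}$, and this sits inside $({\rm Aut}_{k-Lie}(L(\dg)_S))^{\Gamma}$ via the inclusion ${\rm Aut}_{S-Lie}(L(\dg)_S)\hookrightarrow {\rm Aut}_{k-Lie}(L(\dg)_S)$ of Proposition~\ref{generatorsk-Lie}, compatibly with the $\Gamma$-action. Under the canonical embedding $\iota$ of Remark~\ref{worth}, the image of $({\rm Aut}_{k-Lie}(L(\dg)_S))^{\Gamma}$ in ${\rm Aut}_{k-Lie}(\mathcal{L})\simeq {\rm Aut}_{R-Lie}(\mathcal{L})\rtimes {\rm Aut}_k(R)$ contains all of ${\rm Aut}_{R-Lie}(\mathcal{L})$, as explicitly pointed out in Remark~\ref{worth}. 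Finally, the canonical restriction-to-the-derived-algebra-and-descent map ${\rm Aut}_{k-Lie}(\widehat{\mathcal{L}})\to {\rm Aut}_{k-Lie}(\mathcal{L})$ factors as $\iota\circ\nu$ once we identify $\widehat{\mathcal{L}}=(\widehat{L}(\dg)_S)^{\Gamma}$ and $\mathcal{L}=(L(\dg)_S)^{\Gamma}$: taking $\Gamma$-invariants of the map $\lambda\circ\tau$ of Corollary~\ref{mainsequence} yields $\nu$, and descent identifies $\Gamma$-invariant automorphisms with automorphisms of the invariant subalgebra. Chaining these inclusions and the surjectivity of $\nu$ onto $({\rm Aut}_{k-Lie}(L(\dg)_S))^{\Gamma}\supseteq {\rm Aut}_{R-Lie}(\mathcal{L})$ gives that every element of ${\rm Aut}_{R-Lie}(\mathcal{L})$ lifts to ${\rm Aut}_{k-Lie}(\widehat{\mathcal{L}})$.

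I expect the main obstacle to be bookkeeping rather than any genuine difficulty: one must check carefully that all the maps in sight—the inclusion $V\hookrightarrow {\rm Aut}_{k-Lie}(\widehat{L}(\dg)_S)$, the projection $\lambda\circ\tau$, the embeddings of Proposition~\ref{generatorsk-Lie} and $\iota$—are $\Gamma$-equivariant for the \emph{twisted} action defined via $\hat\pi$, and that taking $\Gamma$-invariants commutes with these identifications (this is where one uses that $\hat\pi$ fixes $c$ and $d$, so that it genuinely commutes with $V$ and preserves the grading). The one subtlety worth flagging is that $\iota$ is \emph{not} surjective in general (Remark~\ref{worth}), so the theorem only claims that ${\rm Aut}_{R-Lie}(\mathcal{L})$—not all of ${\rm Aut}_{k-Lie}(\mathcal{L})$—lies in the image; one must therefore be careful to invoke only the part of Remark~\ref{worth} asserting that ${\rm Aut}_{R-Lie}(\mathcal{L})\subseteq {\rm Im}\,\iota$, and not claim more. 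Everything else is a formal consequence of the vanishing $H^1(\Gamma,V)=1$ already established.
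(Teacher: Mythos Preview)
Your proposal is correct and follows essentially the same route as the paper: the exact sequence is obtained from the long exact cohomology sequence attached to (\ref{mainsequence1}) together with $V^{\Gamma}=V$ and Lemma~\ref{cohomologyV}, and the ``in particular'' clause follows by combining surjectivity of $\nu$ with Remark~\ref{worth} and the commutative square relating $({\rm Aut}_{k-Lie}(\widehat{L}(\dg)_S))^{\Gamma}\hookrightarrow {\rm Aut}_{k-Lie}(\widehat{\mathcal{L}})$, $\iota$, and the canonical map. One small phrasing issue: the canonical map ${\rm Aut}_{k-Lie}(\widehat{\mathcal{L}})\to {\rm Aut}_{k-Lie}(\mathcal{L})$ does not literally \emph{factor} as $\iota\circ\nu$ (those live on the $\Gamma$-invariant side, which only \emph{embeds} into the automorphism groups of the invariants), so what you really use is the commutative square---exactly as the paper does.
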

\begin{proof} The first assertion is clear. As for the second one, note
that as in Remark~\ref{worth} we have the canonical embedding
 $$
({\rm Aut}_{k-Lie}\,(\widehat{L}(\dg)_S))^{\Gamma} \hookrightarrow
{\rm Aut}_{k-Lie}\,((\widehat{L}(\dg)_S)^{\Gamma})=
{\rm Aut}_{k-Lie}\,(
\widehat{\mathcal{L}})
$$
and the commutative diagram
\[
\begin{CD}
({\rm Aut}_{k-Lie}\,(\widehat{L}(\dg)_S))^{\Gamma} @>\nu>>
({\rm Aut}_{k-Lie}\,(L(\dg)_S))^{\Gamma}\\
@VVV @VVV\\
{\rm Aut}_{k-Lie}\,(\widehat{\mathcal{L}}) @>>>
{\rm Aut}_{k-Lie}\,({\mathcal{L}})\\
\end{CD}
\]
Then surjectivity of $\nu$ and Remark~\ref{worth} yield the result.

\end{proof}

\section{Some properties of affine Kac-Moody
Lie algebras}\label{Some properties of affine Kac-Moody Lie algebras}

%
%

Henceforth we fix a simple finite dimensional Lie algebra $\dg$ and a
(diagram) automorphism $\sigma$ of finite order $m$. For brevity, we will
write $\widehat{\mathcal{L}}$ and $(\widetilde{\mathcal{L}},\mathcal{L})$ for
$\widehat{L}(\dg,\sigma)$ and $
(\widetilde{L}(\dg,\sigma),L(\dg,\sigma))$ respectively.

For all $l_1,l_2\in \mathcal{L}$ one has
\begin{equation}\label{difference}
[l_1,l_2] -[l_1,l_2]_{\widehat{\mathcal{L}}}=ac
\end{equation}
for some scalar $a\in k$. Using (\ref{newbracket}) it is also easy
to see that for all $y\in \mathcal{L}$ one has
\begin{equation}\label{differential}
[d,yt^n]_{\widehat{\mathcal{L}}}= mnyt^n+[d,y]_{\widehat{\mathcal{L}}}\,t^n
\end{equation}
\begin{remark} Recall that $\mathcal{L}$ has a natural $R$-module structure:
If $y=x\otimes t^{\frac{p}{m}}\in \mathcal{L}$ then
$$yt:=x\otimes t^{\frac{p}{m}+1} = x\otimes t^{\frac{p +m}{m}} \in \mathcal{L}.$$ Therefore since
$[d,y]_{\widehat{\mathcal{L}}}$ is contained in $\mathcal{L}$ the expression
$[d,y]_{\widehat{\mathcal{L}}}\,t^n$ is meaningful.
\end{remark}

The infinite dimensional Lie algebra $\widehat {\mathcal{L}}$ admits
a unique (up to non-zero scalar) invariant nondegenerate bilinear form
$(\cdot,\cdot)$.
Its restriction to
$\mathcal{L}\subset \widehat{\mathcal{L}}$
is nondegenerate (see \cite[7.5.1 and 8.3.8]{Kac}) and we have
$$(c,c)=(d,d)=0,\ 0\neq(c,d)=\beta\in k^{\times}$$ and
$$(c,l)=(d,l)=0 \mbox{\ \ for all\ } l\in \mathcal{L}.$$

\begin{remark}\label{restrictioninvariantform} It is known that a
nondegenerate invariant bilinear
form on $\widehat{\mathcal{L}}$ is unique up to nonzero scalar.
We may view $\widehat{\mathcal{L}}$ as a subalgebra in the split
Kac-Moody Lie algebra
$\widehat{L}(\dg)_S$. The last one also admits a nondegenerate
invariant bilinear
form and it is known  that its restriction to $\widehat{\mathcal{L}}$ is nondegenerate.
Hence this restriction is proportional to the form $(\cdot,\cdot)$.
\end{remark}

Let $\dh_{\overline{0}}$ be a Cartan subalgebra
of the Lie algebra $\dg_{\overline{0}}.$

\begin{lemma} \label{centralizer is Cartan}
 The centralizer of $\dh_{\overline{0}}$ in $\dg$ is a Cartan
 subalgebra $\dh$ of $\dg$.
\end{lemma}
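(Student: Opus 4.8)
The plan is to show that $\dh:=\mathfrak{z}_{\dg}(\dh_{\overline{0}})$ is abelian, and hence equal to a Cartan subalgebra of $\dg$ that contains $\dh_{\overline{0}}$.

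First I would check that $\dh_{\overline{0}}$ is a \emph{toral} subalgebra of $\dg$, i.e.\ that it is abelian and consists of elements that are semisimple in $\dg$. Since $\sigma$ has finite order and $\Char k=0$, the automorphism $\sigma$ is semisimple and $\dg_{\overline{0}}$ is reductive (for a diagram automorphism it is in fact simple); hence the Cartan subalgebra $\dh_{\overline{0}}$ of $\dg_{\overline{0}}$ is abelian and each of its elements is $\operatorname{ad}_{\dg_{\overline{0}}}$-semisimple. Given $h\in\dh_{\overline{0}}$ with Jordan decomposition $h=h_{s}+h_{n}$ in $\dg$, the equivariance of the Jordan decomposition under $\sigma$ together with $\sigma(h)=h$ forces $h_{s},h_{n}\in\dg_{\overline{0}}$; restricting $\operatorname{ad}_{\dg}(h_{s})$ and $\operatorname{ad}_{\dg}(h_{n})$ to $\dg_{\overline{0}}$ then exhibits $h=h_{s}+h_{n}$ as the Jordan decomposition of $h$ inside $\dg_{\overline{0}}$, so $h_{n}=0$ by semisimplicity of $h$ in $\dg_{\overline{0}}$. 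Thus $\dh_{\overline{0}}$ is toral in $\dg$, hence contained in some Cartan subalgebra $\mathfrak{c}$ of $\dg$. Writing $\Phi=\Phi(\dg,\mathfrak{c})$ and $\Psi=\{\alpha\in\Phi:\alpha|_{\dh_{\overline{0}}}=0\}$ one gets
$$\dh=\mathfrak{z}_{\dg}(\dh_{\overline{0}})=\mathfrak{c}\oplus\bigoplus_{\alpha\in\Psi}\dg_{\alpha},$$
a reductive subalgebra whose derived subalgebra $\mathfrak{s}:=[\dh,\dh]$ is nonzero exactly when $\Psi\neq\emptyset$; so the lemma amounts to showing $\Psi=\emptyset$.

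The crucial observation is that $\dh$ is $\sigma$-stable, because $\dh_{\overline{0}}$ is fixed pointwise by $\sigma$, and that
$$\dh^{\sigma}=\mathfrak{z}_{\dg_{\overline{0}}}(\dh_{\overline{0}})=\dh_{\overline{0}},$$
the last equality because a Cartan subalgebra of a reductive Lie algebra is self-centralizing. Since $\dh_{\overline{0}}$ is central in $\dh$, it meets the semisimple ideal $\mathfrak{s}$ trivially, so $\mathfrak{s}^{\sigma}=\mathfrak{s}\cap\dh^{\sigma}=\mathfrak{s}\cap\dh_{\overline{0}}=0$. But $\mathfrak{s}$ is a $\sigma$-stable semisimple subalgebra of $\dg$ and $\sigma|_{\mathfrak{s}}$ has finite order, while by the classical structure theory of finite-order automorphisms over an algebraically closed field of characteristic zero, a finite-order automorphism of a \emph{nonzero} semisimple Lie algebra has a nonzero fixed subalgebra (see \cite[Chapter~8]{Kac}). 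Hence $\mathfrak{s}=0$, i.e.\ $\Psi=\emptyset$ and $\dh=\mathfrak{c}$, which is a Cartan subalgebra of $\dg$ containing $\dh_{\overline{0}}$.

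The step I expect to be the main obstacle is exactly the cited input that a finite-order automorphism of a nonzero semisimple Lie algebra cannot act without nonzero fixed vectors; this is where the characteristic-zero, algebraically-closed hypotheses genuinely enter. If a self-contained argument is wanted, it is obtained in the usual way: such a $\tau$ is semisimple, hence stabilizes a Borel subalgebra $\mathfrak{b}$ together with a Cartan subalgebra $\mathfrak{h}_{\mathfrak{s}}\subset\mathfrak{b}$ (a fixed point of the finite cyclic group $\langle\tau\rangle$ on the flag variety, followed by an averaging argument on the affine space of Cartan subalgebras contained in $\mathfrak{b}$), and then $\tau$ acts on $\mathfrak{h}_{\mathfrak{s}}$ by permuting the simple coroots, so its fixed space — spanned by the sums over $\langle\tau\rangle$-orbits — is nonzero because the set of simple roots is nonempty. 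The remaining ingredients (equivariance of the Jordan decomposition, self-centralizing Cartan subalgebras, reductivity of $\dg_{\overline{0}}$ and of centralizers of toral subalgebras) are standard.
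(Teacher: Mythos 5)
The paper does not prove this lemma at all: it simply cites \cite[Lemma 8.1]{Kac}. Your proposal supplies a genuine, self-contained argument, and it is correct. Your route (show $\dh_{\overline{0}}$ is toral in $\dg$, embed it in a Cartan subalgebra $\mathfrak{c}$, write the centralizer as $\mathfrak{c}\oplus\bigoplus_{\alpha\in\Psi}\dg_{\alpha}$, and kill the semisimple part $\mathfrak{s}=[\dh,\dh]$ by observing that it is $\sigma$-stable with $\mathfrak{s}^{\sigma}=\mathfrak{s}\cap\dh_{\overline{0}}=0$, contradicting the nonvanishing of fixed points of a finite-order automorphism of a nonzero semisimple Lie algebra) is different in flavour from Kac's, which works directly with the $\mathbb{Z}/m\mathbb{Z}$-grading of the centralizer and verifies the Cartan axioms (nilpotent and self-normalizing) by hand. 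Your version trades that computation for two standard structural inputs — extension of toral subalgebras to Cartan subalgebras, and the fixed-point lemma, for which your Borel/Cartan/permutation-of-simple-coroots sketch is adequate — and has the advantage of exhibiting $\dh$ explicitly as a Cartan subalgebra containing $\dh_{\overline{0}}$, which is the form in which the lemma is actually used later.

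One step deserves tightening. From the operator identity $\operatorname{ad}_{\dg_{\overline{0}}}(h)=\operatorname{ad}_{\dg_{\overline{0}}}(h_{s})+\operatorname{ad}_{\dg_{\overline{0}}}(h_{n})$ and uniqueness of the Jordan decomposition of operators you only get $\operatorname{ad}_{\dg_{\overline{0}}}(h_{n})=0$, i.e.\ $h_{n}$ lies in the centre of $\dg_{\overline{0}}$; concluding $h_{n}=0$ uses that $\operatorname{ad}_{\dg_{\overline{0}}}$ is injective, i.e.\ that $\dg_{\overline{0}}$ has trivial centre. In the paper's setting $\sigma$ is a diagram automorphism, so $\dg_{\overline{0}}$ is simple and this is harmless (as your parenthetical already notes); for a general finite-order $\sigma$, as in Kac's statement, one would add that a nilpotent element of $\dg$ centralizing $\dg_{\overline{0}}$ lies in the radical of the (nondegenerate) restriction of the Killing form to $\dg_{\overline{0}}$, hence vanishes. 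This is a one-line patch, not a genuine gap.
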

\begin{proof}
See \cite[Lemma 8.1]{Kac}.
\end{proof}

 The algebra $\mathcal{H} = \dh_{\overline{0}} \oplus kc \oplus kd$ plays the
role of  Cartan subalgebra for $\widehat{\mathcal{L}}.$
 With respect to $\mathcal H$ our algebra $\widehat{\mathcal{L}}$ admits a
root space decomposition. The roots are of two types: anisotropic
(real) or isotropic (imaginary). This terminology comes from
transferring the form to $\mathcal{H}^*$ and computing the
``length" of the roots.

The {\it core} $\widetilde{\mathcal{L}}$ of $\widehat{ \mathcal{L}}$ is the subalgebra
generated by all the anisotropic roots. In our case we have
$\widetilde{\mathcal{L}}=\mathcal{L}\oplus kc$. The correct way to recover $\mathcal{L}$
inside $\widehat{\mathcal{L}}$ is as its core modulo its centre.\footnote{In
nullity one the core coincides with the derived algebra, but this
is not necessarilty true in higher nullities.}

If $\mm\subset \widehat{\mathcal{L}}$ is an abelian subalgebra and $\alpha\in
\mm^*={\rm Hom}(\mm,k)$ we denote the corresponding eigenspace in
$\widehat{\mathcal{L}}$ (with respect to the adjoint representation of
$\widehat{\mathcal{L}}$) by $\widehat{\mathcal{L}}_{\alpha}$. Thus,
$$
\widehat{\mathcal{L}}_{\alpha}=\{\,l\in \widehat{\mathcal{L}}\ | \
[x,l]_{\widehat{\mathcal{L}}}= \alpha(x)l \mbox{\ for all\ } x\in \mm\,\}.$$
The subalgebra $\mm$ is called {\it diagonalizable} in $\widehat{\mathcal{L}}$ if $$
\widehat{\mathcal{L}}=\bigoplus_{\alpha\in \mm^*} \widehat{\mathcal{L}}_{\alpha}.
$$

Every diagonalizable subalgebra of $\mm \subset \widehat{\mathcal{L}}$ is necessarily abelian.
We say that $\mm$ is a maximal (abelian) diagonalizable subalgebra (MAD) if
it is not properly contained in a larger diagonalizable
subalgebra of $\widehat{\mathcal{L}}$.
\begin{remark}
Every MAD of $\widehat{\mathcal{L}}$ contains the center $kc$ of $\widehat{\mathcal{L}}$.
\end{remark}


\begin{example}\label{example}
 The subalgebra $\mathcal H$ is a MAD in
$\widehat{\mathcal{L}}$ (see \cite[Theorem 8.5]{Kac}).
\end{example}


Our aim is to show that an arbitrary maximal diagonalizable
subalgebra $\mm\subset \widehat{\mathcal{L}}$ is conjugate to $\mathcal{H}$
under an element of ${\rm Aut}_k(\widehat{\mathcal{L}})$. For  future
reference we record the following facts:

\begin{theorem}\label{facts}

\noindent {\rm (a)} {\it Every diagonalizable subalgebra in $\mathcal{L}$ is
contained in a MAD of $\mathcal{L}$ and all MADs of $\mathcal{L}$
are conjugate. More
precisely, let $\dG$ be the simple simply connected
group scheme over $R$
corresponding to $\mathcal{L}$. Then for any MAD $\mm$ of $\mathcal{L}$
there exists
$g\in \dG(R)$ such that $Ad(g)(\mm)=\dh_{\overline{0}}$.}

\smallskip

\noindent {\rm (b)} {\it There exists a natural bijection between MADs of
$\widetilde{\mathcal{L}}$ and MADs of $\mathcal{L}$. Every diagonalizable
subalgebra
in $\widetilde{\mathcal{L}}$ is contained in a MAD of
$\widetilde{\mathcal{L}}$. All
MADs of $\widetilde{\mathcal{L}}$ are conjugate by elements in
$Ad(\dG(R))\subset {\rm Aut}_k(\mathcal{L})\simeq
{\rm Aut}_k(\widetilde{\mathcal{L}})$. }

\smallskip

\noindent {\rm (c)} {\it The image of the canonical map
${\rm Aut}_k(\widehat{\mathcal{L}}) \to {\rm Aut}_k(\widetilde{\mathcal{L}})
\simeq  {\rm Aut}_k({\mathcal{L}})$}
obtained by restriction to the derived subalgebra
$\widetilde{\mathcal{L}}$ contains ${\rm Aut}_{R-Lie}(\mathcal{L})$.

\end{theorem}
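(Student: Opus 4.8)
The three parts are of quite different natures, and I would dispatch them in the order (a), (c), (b). Part~(a) is the only one carrying deep content, but that content is imported from the literature; part~(c) is essentially already proved in the excerpt; and part~(b) is a short reduction to~(a) by bookkeeping around the central element $c$. So the only genuine work is in part~(b), and the one point there that needs care is showing that the preimage in $\widetilde{\mathcal{L}}$ of a MAD of $\mathcal{L}$ is again diagonalizable; I expect that to be the main (modest) obstacle.

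For part~(a) there is nothing new to prove: the statement is exactly the conjugacy theorem for twisted loop algebras, established in \cite{P1} in the untwisted case and in \cite{CGP} in general, where it is already phrased as the $\dG(R)$-conjugacy of the MADs of $\mathcal{L}$ (equivalently, of the maximal split $R$-subtori of $\dG$). The inclusion ``every diagonalizable subalgebra lies in a MAD'' is part of that package --- split subtori of $\dG$ have bounded rank, so each sits inside a maximal one --- and simple-connectedness of $\dG$ is what allows the conjugating element to be chosen in $\dG(R)$ itself rather than merely in $\overline{\dG}(R)$.

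Part~(c) is a reformulation of Theorem~\ref{mainsequencetwist}. The canonical map ${\rm Aut}_k(\widehat{\mathcal{L}})\to{\rm Aut}_k(\widetilde{\mathcal{L}})$ is restriction to the derived subalgebra $\widetilde{\mathcal{L}}=[\widehat{\mathcal{L}},\widehat{\mathcal{L}}]$, and ${\rm Aut}_k(\widetilde{\mathcal{L}})\to{\rm Aut}_k(\mathcal{L})$ is the isomorphism obtained by passing to the quotient by the centre $kc$ (the twisted analogue of Proposition~\ref{lambda}; equivalently, $\widetilde{\mathcal{L}}$ is the universal central extension of the perfect algebra $\mathcal{L}$, so automorphisms lift uniquely). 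The composite is precisely the ``canonical mapping'' appearing in Theorem~\ref{mainsequencetwist}, and that theorem states that its image contains ${\rm Aut}_{R-Lie}(\mathcal{L})$; this is exactly the assertion of~(c).

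For part~(b) I would first record three elementary facts. (i)~Since $c$ is central it acts as zero, so for any diagonalizable subalgebra $\mathfrak{a}\subset\widetilde{\mathcal{L}}$ the subalgebra $\mathfrak{a}+kc$ is abelian with the same weight-space decomposition of $\widetilde{\mathcal{L}}$, hence again diagonalizable; in particular every MAD of $\widetilde{\mathcal{L}}$ contains $kc$. (ii)~The $2$-cocycle defining $\widetilde{\mathcal{L}}=\mathcal{L}\oplus kc$ vanishes on $\dh_{\overline{0}}$, because $\dh_{\overline{0}}\subset\dg_{\overline{0}}$ lies in degree $0$ and the $c$-component of $[x\otimes t^{p/m},y\otimes t^{q/m}]_{\widehat{\mathcal{L}}}$ carries the factor $p$; hence $\dh_{\overline{0}}$ acts on $\widetilde{\mathcal{L}}=\mathcal{L}\oplus kc$ preserving the two summands and acting on $\mathcal{L}$ as before, so $\dh_{\overline{0}}\oplus kc$ is diagonalizable, and it is a MAD of $\widetilde{\mathcal{L}}$ because $\dh_{\overline{0}}$ is a MAD of $\mathcal{L}$ by~(a). (iii)~By the universal-central-extension fact used in~(c), $\dG(R)$ acts on $\widetilde{\mathcal{L}}$ lifting its action on $\mathcal{L}$; any such lift preserves $kc$ and therefore is compatible with the projection $\mu\colon\widetilde{\mathcal{L}}\to\mathcal{L}$. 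Granting these, I would show that $\widetilde{\mm}\mapsto\mu(\widetilde{\mm})$ and $\mm\mapsto\mu^{-1}(\mm)=\mm\oplus kc$ are mutually inverse bijections between the MADs of $\widetilde{\mathcal{L}}$ and those of $\mathcal{L}$: diagonalizability descends along $\mu$ (the quotient of a weight decomposition by a submodule contained in the $0$-weight space is again a weight decomposition); the preimage of a MAD of $\mathcal{L}$ is diagonalizable because, using~(a), a lift of the relevant $g\in\dG(R)$ conjugates it into $\dh_{\overline{0}}\oplus kc$; and maximality on either side follows from maximality on the other together with the identity $\widetilde{\mm}=\mu^{-1}(\mu(\widetilde{\mm}))$, valid because $c\in\widetilde{\mm}$ by~(i). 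Finally, ``every diagonalizable subalgebra of $\widetilde{\mathcal{L}}$ lies in a MAD'' and ``all MADs of $\widetilde{\mathcal{L}}$ are $Ad(\dG(R))$-conjugate'' transfer verbatim from the corresponding statements of~(a) through this bijection and the $\dG(R)$-action of~(iii).
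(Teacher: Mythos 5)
Your proposal is correct and follows essentially the same route as the paper: (a) is quoted from \cite{CGP} (with $\dh_{\overline{0}}$ as the reference MAD), (c) is exactly Theorem~\ref{mainsequencetwist}, and (b) rests on the observations that every MAD of $\widetilde{\mathcal{L}}$ contains $kc$, that MADs of $\widetilde{\mathcal{L}}$ are precisely the subalgebras $\mm\oplus kc$ for $\mm$ a MAD of $\mathcal{L}$, and that ${\rm Aut}_k(\widetilde{\mathcal{L}})\simeq{\rm Aut}_k(\mathcal{L})$ via Proposition~\ref{lambda}. The only difference is that you supply the verification (conjugating into $\dh_{\overline{0}}\oplus kc$, where the central cocycle visibly vanishes) that the paper leaves implicit when asserting that $\mm\oplus kc$ is again diagonalizable.
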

\begin{proof} (a) From the explicit realization of $\mathcal{L}$ one knows
that $\dh_{\overline{0}}$ is a MAD of $\mathcal{L}.$ Now (a) follows from \cite{CGP}.

(b) The correspondence follows from the fact that every MAD of
$\widetilde{\mathcal{L}}$ contains $kc.$ A  MAD $\widetilde{\mm}$ of
$\widetilde{\mathcal{L}}$ is necessarily of the form $\mm \oplus kc$ for some MAD $\mm$
of $\mathcal{L}$ and conversely. The canonical map
${\rm Aut}_k(\widetilde{\mathcal{L}}) \to {\rm Aut}_k(\mathcal{L})$ is an isomorphism
by Proposition~\ref{lambda}.


(c) This was established in Theorem \ref{mainsequencetwist}.

\end{proof}



\begin{lemma}\label{notproper}
If $\mm\subset \widehat{\mathcal{L}}$ is a MAD of $\widehat{\mathcal{L}}$ then
$\mm\not\subset \widetilde{\mathcal{L}}$.
\end{lemma}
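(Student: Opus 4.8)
The plan is to argue by contradiction: suppose $\mm \subset \widetilde{\mathcal{L}}$ is a MAD of $\widehat{\mathcal{L}}$. Since every MAD of $\widehat{\mathcal{L}}$ contains the centre $kc$, and since $\mm$ lies in $\widetilde{\mathcal{L}} = \mathcal{L} \oplus kc$, the image $\overline{\mm}$ of $\mm$ in $\widetilde{\mathcal{L}}/kc \simeq \mathcal{L}$ is an abelian subalgebra, and in fact a diagonalizable one (the adjoint action of $\mm$ on $\widehat{\mathcal{L}}$ restricts to a diagonalizable action on $\widetilde{\mathcal{L}}$, hence descends to $\mathcal{L}$). By Theorem~\ref{facts}(a), $\overline{\mm}$ is contained in a MAD of $\mathcal{L}$, and up to conjugating by a suitable $g \in \dG(R)$ — which by Theorem~\ref{facts}(b),(c) is realized by an automorphism of all of $\widehat{\mathcal{L}}$ — I may assume $\overline{\mm} \subseteq \dh_{\overline{0}}$, and after adjusting by $kc$ that $\mm \subseteq \dh_{\overline{0}} \oplus kc \subset \mathcal{H}$.

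The contradiction then comes from exhibiting an element outside $\widetilde{\mathcal{L}}$ that commutes with all of $\mm$, so that $\mm$ is not maximal. The natural candidate is the degree derivation $d$. Indeed $\mathcal{H} = \dh_{\overline{0}} \oplus kc \oplus kd$ is abelian, so $d$ centralizes $\dh_{\overline{0}} \oplus kc \supseteq \mm$. Thus $\mm + kd$ is an abelian subalgebra of $\widehat{\mathcal{L}}$ strictly larger than $\mm$ (as $d \notin \widetilde{\mathcal{L}}$). To finish I must check that $\mm + kd$ is still \emph{diagonalizable} in $\widehat{\mathcal{L}}$, i.e. that adjoining $\ad d$ to the commuting family $\{\ad x : x \in \mm\}$ preserves simultaneous diagonalizability. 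This is where the explicit root-space decomposition of $\widehat{\mathcal{L}}$ with respect to $\mathcal{H}$ (Example~\ref{example}) enters: since $\mm \subseteq \mathcal{H}$ and $\mathcal{H}$ is itself a MAD, every $\mm$-eigenspace $\widehat{\mathcal{L}}_\alpha$ is a sum of $\mathcal{H}$-weight spaces, each of which is an $\ad d$-eigenspace; hence $\ad d$ acts diagonalizably on each $\widehat{\mathcal{L}}_\alpha$, and the whole family $\mm + kd$ is simultaneously diagonalizable. This contradicts the maximality of $\mm$.

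The main obstacle is the reduction step: I must be careful that conjugating $\overline{\mm}$ into $\dh_{\overline{0}}$ inside $\mathcal{L}$ can be lifted to an automorphism of $\widehat{\mathcal{L}}$ (not merely of $\widetilde{\mathcal{L}}$) without destroying the hypothesis $\mm \subseteq \widetilde{\mathcal{L}}$ — but this is exactly what Theorem~\ref{facts}(b),(c) provides, since $\Ad(\dG(R)) \subseteq \operatorname{Aut}_{R\text{-}Lie}(\mathcal{L})$ lies in the image of $\operatorname{Aut}_k(\widehat{\mathcal{L}}) \to \operatorname{Aut}_k(\widetilde{\mathcal{L}})$, and any such automorphism preserves $\widetilde{\mathcal{L}}$ as the core (equivalently, the derived algebra) of $\widehat{\mathcal{L}}$. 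A secondary subtlety is that after conjugation $\mm$ need not equal $\dh_{\overline{0}} \oplus kc$ but only sit inside it; this is harmless, since all that is needed is $\mm \subseteq \mathcal{H}$ in order to run the diagonalizability argument for $\mm + kd$. Alternatively, one can bypass the reduction entirely: if $\mm \subseteq \widetilde{\mathcal{L}} = \mathcal{L} \oplus kc$ and $\mm$ is diagonalizable in $\widehat{\mathcal{L}}$, then using formula~(\ref{differential}) and the $R$-module structure on $\mathcal{L}$ one shows directly that the centralizer of $\mm$ in $\widehat{\mathcal{L}}$ meets $kd$ nontrivially, again contradicting maximality; I expect the first, more conceptual route to be cleaner to write.
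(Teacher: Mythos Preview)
Your proposal is correct and follows essentially the same route as the paper: assume $\mm\subset\widetilde{\mathcal{L}}$, use Theorem~\ref{facts} to conjugate (by an element lifting to $\operatorname{Aut}_k(\widehat{\mathcal{L}})$) so that $\mm\subset\dh_{\overline{0}}\oplus kc\subsetneq\mathcal{H}$, and derive a contradiction with maximality. The only cosmetic difference is that the paper works directly with a MAD $\mm'$ of $\widetilde{\mathcal{L}}$ containing $\mm$ (via part~(b)) rather than passing through the quotient $\mathcal{L}$, and then simply observes that $\mm$ sits properly inside the known MAD $\mathcal{H}$ of $\widehat{\mathcal{L}}$ (Example~\ref{example}), which immediately gives the contradiction without your explicit verification that $\mm+kd$ is diagonalizable.
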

\begin{proof} Assume that $\mm\subset
\widetilde{\mathcal{L}}$. By Theorem~\ref{facts} (b), there exists a MAD
$\mm'$ of
$\widetilde{\mathcal{L}}$
containing $\mm$. Applying again
Theorem~\ref{facts} we may assume that up to
conjugation by an element of ${\rm Aut}_k(\widehat{\mathcal{L}}),$ in fact of
$\widehat{\dG}(R),$ we have
$\mm\subset \mm'=\dh_{\overline{0}}\oplus kc$. Then $\mm$ is a proper subalgebra
of the MAD $\mathcal{H}$ of $\widehat{\mathcal{L}}$ and this contradicts the
maximality of $\mm$.
\end{proof}

In the next three sections we are going to prove some preliminary results
related to a subalgebra $\widehat{A}$ of the twisted affine Kac-Moody Lie algebra
$\widehat{\mathcal{L}}$ which
satisfies the following two conditions:
\medskip

\noindent a) {\it $\widehat{A}$ is of the form $\widehat{A}=A\oplus kc\oplus kd$,
where $A$ is an $R$-subalgebra of $\mathcal{L}$ such that $A\otimes_R K$ is a
semisimple Lie algebra over $K$ where $K=k(t)$ is
the fraction field of $R$.}
\smallskip

\noindent b) {\it The restriction to $\widehat{A}$ of the  non-degenerate invariant bilinear form
${\rm (-,-)}$ of  $\widehat{\mathcal{L}}$ is non-degenerate.}

\medskip

In particular, all these results will be valid for  $\widehat{A}=\widehat{\mathcal{L}}$.

\section{Weights of semisimple operators and their properties}

Let $x=x'+d\in\widehat{A}$ where
$x'\in A$. It induces a $k$-linear operator $$ad(x): \widehat{A}
\to \widehat{A},\ \ y \to ad(x)(y)=[x,y]_{\widehat{A}}.$$
We say that $x$
is a {$k$-diagonalizable element of}
$\widehat{A}$ if $\widehat{A}$ has a $k$-basis consisting
of eigenvectors of $ad(x)$. Throughout we assume that $x'\not=0$
and that $x$ is $k$-diagonalizable.

For any scalar $w\in k$ we let
$$
\widehat{A}_w=\{\,y\in \widehat{A}\ | \ [x,y]_{\widehat{A}}=wy\,\}.
$$
We say that $w$ is a {\it  weight} ($=$ eigenvalue) of $ad(x)$
if $\widehat{A}_w\not=0$.
More generally, if $O$ is a diagonalizable linear operator of a vector space $V$ over $k$ (of main interest to us are the vector
spaces $\widehat{A},\,\widetilde{A}=A\oplus kc,\,A$) and
if $w$ is its eigenvalue following standard practice we will denote
by $V_w\subset V$ the corresponding eigenspace of $O$.

\begin{lemma}\label{insider} (a) If $w$ is a nonzero weight of $ad(x)$
then $\widehat{A}_w
\subset \widetilde{A}$.

\smallskip

\noindent
(b) $\widehat{A}_0=\widetilde{A}_0
\oplus \langle\,x\,\rangle$.

\end{lemma}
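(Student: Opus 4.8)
Write $x = x' + d$ with $0 \neq x' \in A$, and assume $\ad(x)$ is $k$-diagonalizable on $\widehat{A}$. The key structural input is the decomposition $\widehat{A} = A \oplus kc \oplus kd$ together with the bracket formulas: $c$ is central, $[d, yt^n]_{\widehat{A}} = mn\,yt^n + [d,y]_{\widehat{A}}\,t^n$ (formula~(\ref{differential})), and for $l_1,l_2 \in A$ one has $[l_1,l_2] - [l_1,l_2]_{\widehat{A}} = a\,c$ for a scalar $a$ (formula~(\ref{difference})). The plan is to compute $\ad(x)$ block-wise with respect to $\widehat{A} = A \oplus kc \oplus kd$ and read off the eigenspaces.

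\textbf{Step 1: the action of $\ad(x)$ on each summand.} First, $\ad(x)(c) = [x'+d, c]_{\widehat{A}} = 0$, so $c \in \widehat{A}_0$ always. Next, $\ad(x)(d) = [x'+d, d]_{\widehat{A}} = [x', d]_{\widehat{A}} = -[d,x']_{\widehat{A}}$, which lies in $A \oplus kc = \widetilde{A}$ (here using that $[d,\cdot]_{\widehat{A}}$ preserves $\mathcal{L}$ and hence $A$, up to the central term). Thus modulo $\widetilde{A}$ the operator $\ad(x)$ kills $d$, i.e.\ $\ad(x)$ descends to an operator on $\widehat{A}/\widetilde{A} \cong kd$ which is zero. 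Finally, for $y \in \widetilde{A}$ we have $\ad(x)(y) = [x',y]_{\widehat{A}} + [d,y]_{\widehat{A}} \in \widetilde{A}$, since $[x',y]_{\widehat{A}} \in A \oplus kc$ and $[d,y]_{\widehat{A}} \in A$. So $\widetilde{A}$ is $\ad(x)$-stable, $\ad(x)$ is $k$-diagonalizable on $\widetilde{A}$ (restriction of a diagonalizable operator to an invariant subspace), and the induced operator on the quotient $\widehat{A}/\widetilde{A}$ is $0$.

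\textbf{Step 2: part (a).} Suppose $w \neq 0$ and $0 \neq y \in \widehat{A}_w$. Write $y = \tilde y + bd$ with $\tilde y \in \widetilde{A}$, $b \in k$. Projecting the equation $\ad(x)(y) = wy$ to $\widehat{A}/\widetilde{A} \cong kd$ and using Step~1 (the induced operator is $0$) gives $0 = w b$, hence $b = 0$ since $w \neq 0$. Therefore $y = \tilde y \in \widetilde{A}$, which is the claim.

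\textbf{Step 3: part (b).} We must show $\widehat{A}_0 = \widetilde{A}_0 \oplus \langle x \rangle$. For ``$\supseteq$'': $\widetilde{A}_0 \subseteq \widehat{A}_0$ trivially, and $\ad(x)(x) = 0$ so $x \in \widehat{A}_0$; also $x = x' + d \notin \widetilde{A}$ (the $d$-component is nonzero) while $\widetilde{A}_0 \subseteq \widetilde{A}$, so the sum is direct provided $x \notin \widetilde{A}$, which holds. For ``$\subseteq$'': take $y \in \widehat{A}_0$ and write $y = \tilde y + bd$. Then $0 = \ad(x)(y) = \ad(x)(\tilde y) + b\,\ad(x)(d)$; since $\ad(x)(d) = \ad(x)(x) - \ad(x)(x') = -\ad(x)(x')$ (as $\ad(x)(x)=0$), we get $\ad(x)(\tilde y - b x') = 0$ with $\tilde y - b x' \in \widetilde{A}$, i.e.\ $\tilde y - b x' \in \widetilde{A}_0$. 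Hence $y = (\tilde y - b x') + b(x' + d) = (\tilde y - bx') + bx \in \widetilde{A}_0 \oplus \langle x \rangle$, as required.

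\textbf{Main obstacle.} The only subtle point is keeping careful track of the central corrections: $[x', y]_{\widehat{A}}$ for $x', y \in A$ differs from $[x',y]$ by a multiple of $c$, and $\ad(x)(d) = -[d,x']_{\widehat{A}}$ must be correctly identified as an element of $\widetilde{A} = A \oplus kc$ (not involving $d$). Once one observes that every relevant bracket lands in $\widetilde{A}$ except for the manifest $d$-component of $x$ itself, the proof is just the block-triangular structure of $\ad(x)$ with respect to $\widetilde{A} \subset \widehat{A}$. The diagonalizability hypothesis is used exactly once, to pass diagonalizability of $\ad(x)$ down to $\widetilde{A}$; it is not needed for the eigenspace identities themselves, which are purely formal.
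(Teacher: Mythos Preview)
Your proof is correct and follows essentially the same approach as the paper. Both arguments rest on the two observations that $\widetilde{A}$ is $\ad(x)$-stable (the paper states this as $[\widehat{A},\widehat{A}]\subset\widetilde{A}$, you verify it summand by summand) and that $\widehat{A}=\langle x\rangle\oplus\widetilde{A}$; the paper then writes the single decomposition $\widehat{A}=\langle x,\widetilde{A}_0\rangle\oplus\bigl(\bigoplus_{w'\neq 0}\widetilde{A}_{w'}\bigr)$ and reads off (a) and (b) at once, while you argue via the quotient $\widehat{A}/\widetilde{A}$ and the substitution $d=x-x'$, which amounts to the same thing unpacked. Your closing remark that diagonalizability is not needed for the eigenspace identities themselves is a fair observation: the paper invokes it to obtain the full weight decomposition of $\widetilde{A}$, whereas your direct argument for (b) avoids that step.
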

\begin{proof} Clearly we have  $[\widehat{A},\widehat{A}]\subset\widetilde{A}$
and this implies
$ad(x)(\widetilde{A})\subset \widetilde{A}$.  It then follows that the linear operator
$ad(x)|_{\widetilde{A}}$ is
$k$-diagonalizable. Let
$\widetilde{A}=\oplus\, \widetilde{A}_{w'}$ where the sum is taken
over all weights of $ad(x)|_{\widetilde{A}}$. Since $x\in
 \widehat{A}_0$ and since
$\widehat{A}=\langle\,x\,\rangle \oplus \widetilde{A}$ we conclude
that
$$
\widehat{A}=\langle\,x, \widetilde{A}_0\,\rangle \oplus (\oplus_{w'\not=0}
\,\widetilde{A}_{w'}),
$$
so that the result follows.
\end{proof}

The operator $ad(x)|_{\widetilde{A}}$ maps the center $\langle\,c\,\rangle = kc$ of
$\widetilde{A}$ into itself, hence
it induces a linear operator $O_x$ of $A\simeq \widetilde{A}/kc$ which is also
$k$-diagonalizable. The last isomorphism is induced by a natural (projection)
mapping $\lambda:\widetilde{A} \to A$. If $w\not= 0$ the restriction of $\lambda$
to $\widetilde{A}_w$
is injective (because $\widetilde{A}_w$ does not contain $kc$).
Since $\widetilde{A}=\oplus_w \widetilde{A}_w$ it then follows that
$$
\lambda |_{\widetilde{A}_w}: \widetilde{A}_w\longrightarrow A_w$$
is an isomorphism for $w\not= 0$.
Thus the three linear operators $ad(x)$,
$ad(x)|_{\widetilde{A}}$ and $O_x$ have the same nonzero weights.

\begin{lemma}\label{shift}
Let $w\not=0$ be a weight of $O_x$ and let $n\in\mathbb{Z}$. Then
$w+mn$ is also a weight of $O_x$ and $A_{w+mn}=t^{n}A_{w}.$
\end{lemma}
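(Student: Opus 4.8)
The statement relates the weight decomposition of $O_x$ to multiplication by powers of $t$ on $A$, so the plan is to use the $R$-module structure on $A$ together with the commutation relation \eqref{differential}. First I would fix a weight $w\neq 0$ of $O_x$ and a nonzero element $\bar y\in A_w$; by the discussion preceding the lemma, $\bar y$ lifts uniquely to an eigenvector $y\in\widetilde A_w$ of $ad(x)|_{\widetilde A}$, i.e. $[x,y]_{\widehat A}=wy$. Writing $x=x'+d$ with $x'\in A$, I want to compute $[x,\,yt^n]_{\widehat A}$ and show it equals $(w+mn)\,yt^n$ modulo $kc$.

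The key computation is the following: using bilinearity, $[x,yt^n]_{\widehat A}=[x',yt^n]_{\widehat A}+[d,yt^n]_{\widehat A}$. For the second term, apply \eqref{differential}: $[d,yt^n]_{\widehat A}=mn\,yt^n+[d,y]_{\widehat A}\,t^n$. For the first term I would use that the centroid element $\chi_t$ (multiplication by $t$) commutes with the bracket on $\mathcal L$ — more precisely, on $\widetilde A=A\oplus kc$ the operator $ad(x')$ is $R$-linear up to the central correction, so $[x',yt^n]_{\widehat A}\equiv t^n[x',y]_{\widehat A}\pmod{kc}$. Combining, modulo $kc$ we get $[x,yt^n]_{\widehat A}\equiv t^n\big([x',y]_{\widehat A}+[d,y]_{\widehat A}\big)+mn\,yt^n = t^n[x,y]_{\widehat A}+mn\,yt^n \equiv t^n(w y)+mn(yt^n)=(w+mn)\,yt^n$. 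Passing to $A\simeq\widetilde A/kc$ this says $O_x(\bar y t^n)=(w+mn)\,\bar y t^n$, so $t^n A_w\subseteq A_{w+mn}$ and in particular $w+mn$ is a weight of $O_x$.

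To get equality $A_{w+mn}=t^nA_w$, I would run the same argument with $n$ replaced by $-n$ and the weight $w$ replaced by $w+mn$: this gives $t^{-n}A_{w+mn}\subseteq A_{w}$, i.e. $A_{w+mn}\subseteq t^n A_w$. Together with the inclusion from the previous paragraph this yields $A_{w+mn}=t^nA_w$. One should note that multiplication by $t^n$ is a bijection on $A$ (its inverse is multiplication by $t^{-n}$, since $t^{\pm1}\in R$), which is what makes the two inclusions combine into an equality rather than merely a chain.

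\textbf{Main obstacle.} The only genuinely delicate point is justifying $[x',yt^n]_{\widehat A}\equiv t^n[x',y]_{\widehat A}\pmod{kc}$, i.e. that $ad(x')$ is $R$-linear on $A$ once we quotient by the center. This is where \eqref{difference} enters: the bracket $[\,\cdot\,,\cdot\,]_{\widehat A}$ differs from the $R$-bilinear bracket $[\,\cdot\,,\cdot\,]$ on $\mathcal L$ only by an element of $kc$, and $[\,\cdot\,,\cdot\,]$ is $R$-bilinear because $\chi_t\in\mathrm{Ctrd}(L(\dg)_S)$ restricts to the centroid of $\mathcal L$. So on the quotient $A$ the correction term disappears and $R$-linearity of $O_x$ with respect to $\chi_{t^n}$ is automatic. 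I would state this carefully as the one line of the proof that is not a formal manipulation, and everything else is bookkeeping with \eqref{differential} and the bijectivity of $\chi_{t^n}$.
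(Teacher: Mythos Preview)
Your proposal is correct and follows essentially the same route as the paper: split $x=x'+d$, use \eqref{difference} and \eqref{differential} to reduce $[x,yt^n]_{\widehat A}$ modulo $kc$ to $(w+mn)\,yt^n$, and then invoke symmetry (replace $w$ by $w+mn$ and $n$ by $-n$) for the reverse inclusion. The paper avoids your lifting step by working directly with $y\in A_w\subset A$ and computing $O_x(yt^n)=\lambda([x,yt^n]_{\widehat A})$; your lift to $y\in\widetilde A_w$ is harmless but unnecessary, and strictly speaking the expression $yt^n$ is only defined once you project back to $A$ (i.e.\ read it as $\bar y\,t^n$), which you effectively do anyway.
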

\begin{proof}
Assume $y\in A_w\subset A$, hence $O_x(y)=wy.$ Let us show that $yt^n\in
A_{w+mn}.$ We have
\begin{equation}\label{weight}
O_x(yt^n)=
\lambda(ad(x)(yt^n))=\lambda([x,yt^n]_{\widehat{A}}).
\end{equation}

Substituting $x=x'+d$ we get
$$
[x,yt^n]_{\widehat{A}}=[x',yt^n]_{\widehat{A}}+[d,yt^n]_{\widehat{A}}
$$
Applying (\ref{difference}) and (\ref{differential}) we get that
the right hand side is equal to
$$
[x',y]\,t^n+ac+[d,y]_{\widehat{A}}\,t^n+mnyt^n
$$
where $a\in k$ is some scalar.
Substituting this into (\ref{weight}) we get
$$\label{calculations}
\begin{array}{lll}
O_x(yt^n) & = & \lambda([x',y]\,t^n+ac+[d,y]_{\widehat{A}}\,t^n+mnyt^n)\\
& = &
[x',y]\,t^n+\lambda([d,y]_{\widehat{A}}\,t^n)+mnyt^n
\end{array}
$$
By (\ref{difference}) there exists $b\in k$ such that
$$
[x',y]\,t^n=([x',y]_{\widehat{A}}+bc)\,t^n.
$$
Here we view $[x',y]\,t^n$ as an element in $\widehat{A}$. Therefore
$$
\begin{array}{lll}
O_{x}(yt^n) & = & mnyt^n+\lambda(([x',y]_{\widehat{A}}+bc)\,t^n+[d,y]_{\widehat{A}}\,t^n)\\
& = &
mnyt^n+\lambda(([x,y]_{\widehat{A}}+bc)\,t^n).\\
\end{array}
$$

We now note that by construction $[x,y]_{\widehat{A}}+bc$ is contained in $A\subset \widetilde{A}$.
Hence
$$
\lambda(([x,y]_{\widehat{A}}+bc)t^n)=\lambda([x,y]_{\widehat{A}}+bc)\,t^n=
\lambda([x,y]_{\widehat{A}}))\,t^n.$$
Since
$\lambda([x,y]_{\widehat{A}})=O_x(y)=wy$ we finally get
$$
O_{x}(yt^n)=mnyt^n+wyt^n=(w+mn)yt^n.
$$
Thus we have showed that $A_wt^n\subset A_{w+nm}$. By symmetry
$A_{w+nm}t^{-n}\subset A_w$ and we are done.
\end{proof}

We now consider the case $w=0$.
\begin{lemma}\label{w=0} Assume that ${\rm dim}\,\widetilde{A}_0>1$
and $n\in\mathbb{Z}$. Then  $mn$ is a weight of $ad(x)$.
\end{lemma}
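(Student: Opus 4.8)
The plan is to produce, starting from the extra dimension in $\widetilde{A}_0$, an eigenvector of $ad(x)$ with eigenvalue $mn$. By Lemma~\ref{insider}(b) we have $\widehat{A}_0 = \widetilde{A}_0 \oplus \langle x\rangle$, and since $\dim \widetilde{A}_0 > 1$ we may pick $0 \neq y_0 \in \widetilde{A}_0$ with $y_0 \notin kc$ (if $\widetilde{A}_0$ contained nothing outside $kc$ it would be one-dimensional, as $c \in \widetilde{A}_0$ always, being central). Projecting via $\lambda : \widetilde{A} \to A$, we get $0 \neq y := \lambda(y_0) \in A_0$, i.e. $O_x(y) = 0$. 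Now I would run the same computation as in Lemma~\ref{shift}, which is stated there for $w \neq 0$ but whose proof is valid verbatim for $w = 0$ once one knows $yt^n$ makes sense and $y \in A$: that computation shows $O_x(yt^n) = mn\, yt^n$, so $mn$ is a weight of $O_x$, hence of $ad(x)$ (the operators $ad(x)$, $ad(x)|_{\widetilde A}$ and $O_x$ share all nonzero weights, and for $mn \neq 0$ this gives the claim; for $n = 0$ there is nothing to prove since $0$ is always a weight as $x \in \widehat A_0$).

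The one point that needs care — and the reason this is a separate lemma rather than a corollary of Lemma~\ref{shift} — is precisely the $w = 0$ case: in Lemma~\ref{shift} the hypothesis $w \neq 0$ guarantees that $\lambda|_{\widetilde{A}_w}$ is injective, so a nonzero element of $A_w$ genuinely reflects a nonzero element of $\widetilde{A}_w \subset \widehat{A}_w$. For $w = 0$ this fails: $\lambda|_{\widetilde A_0}$ has kernel $kc$. That is exactly why one must assume $\dim \widetilde{A}_0 > 1$ — it is what lets us choose $y_0$ not proportional to $c$, so that $y = \lambda(y_0) \neq 0$ in $A_0$. Once $y \neq 0$ in $A$, the shift argument of Lemma~\ref{shift} produces $yt^n \neq 0$ in $A_{mn}$ (since multiplication by $t^n$ is invertible on the $R$-module $A$), and we are done. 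So the main (and essentially only) obstacle is this bookkeeping about when the projection loses information; the algebraic heart of the argument is identical to Lemma~\ref{shift} and I would simply invoke that computation rather than repeat it.

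Concretely, I would write: \emph{Since $c \in \widetilde{A}_0$ and $\dim\widetilde{A}_0 > 1$, choose $y_0 \in \widetilde{A}_0$ with $y := \lambda(y_0) \neq 0$, so $O_x(y) = 0$, i.e.\ $y \in A_0$. By the computation in the proof of Lemma~\ref{shift} (which goes through verbatim for $w = 0$), $O_x(yt^n) = mn\, yt^n$ with $yt^n \neq 0$; hence $mn$ is a weight of $O_x$, and therefore of $ad(x)$.} This keeps the proof to a few lines and cleanly isolates the role of the dimension hypothesis.
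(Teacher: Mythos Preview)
Your proposal is correct and matches the paper's approach essentially line for line: the paper also observes that $\dim\widetilde{A}_0>1$ yields a nonzero $y\in A$ with $[x,y]_{\widehat{A}}=0$ (equivalently $y\in A_0$, since $\widetilde{A}_0=(A\cap\widetilde{A}_0)\oplus kc$), and then invokes ``the same computations as above'' (i.e.\ the proof of Lemma~\ref{shift}) to conclude that $yt^n$ is an eigenvector for the weight $mn$. Your write-up is in fact more explicit than the paper's about why the dimension hypothesis is needed and about the passage from weights of $O_x$ to weights of $ad(x)$.
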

\begin{proof} Since ${\rm dim}\,\widetilde{A}_0>1$ there exists nonzero
$y\in A$ such that $[x,y]_{\widehat{A}}=0$.
Then the same computations as above show that $[x,yt^n]_{\widetilde{A}}=mnyt^n$.
\end{proof}

Our next aim is to show that if $w$ is a weight of $ad(x)$ so is
$-w$.
We remind  the reader that $\widehat{A}$ is equipped
with the nondegenerate invariant bilinear form $(-,-)$. Hence for all
$y,z\in\widehat{A}$ one has
\begin{equation}\label{invariantform}
([x,y]_{\widehat{A}},z)=-(y,[x,z]_{\widehat{A}}).
\end{equation}
\begin{lemma}\label{opposite weight}
 If $w$ is a weight of $ad(x)$ then so is $-w$.
\end{lemma}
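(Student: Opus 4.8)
The statement to prove is Lemma~\ref{opposite weight}: if $w$ is a weight of $ad(x)$, then so is $-w$. The natural tool is the nondegenerate invariant bilinear form $(-,-)$ on $\widehat{A}$ together with the invariance identity (\ref{invariantform}), which says $ad(x)$ is skew-symmetric with respect to $(-,-)$. The plan is to show that the pairing puts the eigenspace $\widehat{A}_w$ in duality with $\widehat{A}_{-w}$, so that one cannot be zero while the other is nonzero.

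First I would record the standard eigenspace-orthogonality fact: if $w + w' \neq 0$, then $\widehat{A}_w \perp \widehat{A}_{w'}$. Indeed, for $y \in \widehat{A}_w$ and $z \in \widehat{A}_{w'}$, applying (\ref{invariantform}) gives $w(y,z) = ([x,y]_{\widehat{A}}, z) = -(y, [x,z]_{\widehat{A}}) = -w'(y,z)$, so $(w+w')(y,z) = 0$ and hence $(y,z) = 0$. Since $x$ is $k$-diagonalizable, $\widehat{A} = \bigoplus_{w'} \widehat{A}_{w'}$, where the sum runs over all weights. Now suppose $w$ is a weight but $-w$ is not, i.e.\ $\widehat{A}_{-w} = 0$. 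Then for any weight $w'$ occurring in the decomposition we have $w' \neq -w$, i.e.\ $w + w' \neq 0$, so by the orthogonality fact $\widehat{A}_w$ is orthogonal to every summand $\widehat{A}_{w'}$, hence orthogonal to all of $\widehat{A}$. This contradicts the nondegeneracy of $(-,-)$ on $\widehat{A}$ (condition (b) on $\widehat{A}$), since $\widehat{A}_w \neq 0$. Therefore $-w$ must be a weight.

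I do not anticipate a genuine obstacle here; the argument is a routine application of skew-symmetry plus nondegeneracy. The only point requiring a little care is to make sure the decomposition $\widehat{A} = \bigoplus_{w'} \widehat{A}_{w'}$ is being used correctly — that the index set is exactly the set of weights of $ad(x)$, which is part of the running hypothesis that $x$ is $k$-diagonalizable — and that nondegeneracy of the form on $\widehat{A}$ (not merely on $\widehat{\mathcal{L}}$) is available, which it is by assumption (b) on $\widehat{A}$. One could alternatively phrase the conclusion more quantitatively: the proof in fact shows $\dim \widehat{A}_w = \dim \widehat{A}_{-w}$, since $(-,-)$ restricts to a perfect pairing $\widehat{A}_w \times \widehat{A}_{-w} \to k$ once one knows both the orthogonality of $\widehat{A}_w$ to all other summands and the nondegeneracy on the whole space; but for the lemma as stated, the nonvanishing conclusion suffices.
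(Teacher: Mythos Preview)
Your proof is correct and is essentially identical to the paper's own argument: both use the invariance identity (\ref{invariantform}) to show that $\widehat{A}_{w_1}\perp\widehat{A}_{w_2}$ whenever $w_1+w_2\neq 0$, and then conclude from the nondegeneracy of $(-,-)$ on $\widehat{A}$ that $\widehat{A}_{-w}\neq 0$. The paper additionally notes the trivial case $w=0$ at the outset, while your version handles it implicitly.
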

\begin{proof} If $w=0$ there is nothing to prove. Assume $w\not=0$.
Consider the root space decomposition
$$
\widehat{A}=\bigoplus_{w'} \widehat{A}_{w'}.
$$
It suffices to show
that for any two weights $w_1,w_2$ of $ad(x)$ such that $w_1+w_2\not=0$
the subspaces $\widehat{A}_{w_1}$ and $\widehat{A}_{w_2}$ are orthogonal
to each other.
Indeed, the last
implies that if $-w$ were not a weight then every element in $\widehat{A}_w$
would be orthogonal to all elements in $\widehat{A},$ which is impossible.

Let $y\in \widehat{A}_{w_1}$ and $z\in \widehat{A}_{w_2}$.
Applying (\ref{invariantform})
we have
$$
w_1(y,z)=([x,y]_{\widehat{A}},z)=
-(y,[x,z]_{\widehat{A}})=-w_2(y,z).
$$
Since $w_1\not=-w_2$ we conclude
$(y,z)=0.$
\end{proof}

Now we switch our interest to the operator $O_x$ and its weight subspaces.
Since the nonzero weights of $ad(x)$, $ad(x)|_{\widetilde{A}}$ and $O_x$ are
the same we obtain,
by Lemmas~\ref{shift} and \ref{w=0},
that for every weight $w$ of $O_x$ all elements in the set
$$
\{\,w+mn\ | \ n\in \mathbb{Z}\,\}
$$
are also weights of $O_x$. We call this set of weights
by {\it $w$-series}. Recall that by Lemma~\ref{shift} we have
$$
A_{w+mn}=A_wt^n.
$$
\begin{lemma}\label{Rspan} Let $w$ be a weight of $O_x$ and let
$A_w R$ be the $R$-span of $A_w$ in $A$. Then the natural map
$\nu:A_w\otimes_k R\rightarrow A_wR$ given by $l\otimes t^n\mapsto
lt^n$ is an isomorphism of $k$-vector spaces.
\end{lemma}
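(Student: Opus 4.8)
The map $\nu$ is clearly a well-defined $k$-linear surjection, since $A_wR$ is by definition spanned over $k$ by the elements $lt^n$ with $l\in A_w$ and $n\in\bbZ$, and these are exactly the images $\nu(l\otimes t^n)$. So the plan is to prove injectivity. First I would recall the key structural fact already established: by Lemma \ref{shift} we have $A_{w+mn}=t^nA_w$ for every $n\in\bbZ$, and the subspaces $A_{w'}$ for distinct weights $w'$ of $O_x$ are in direct sum inside $A$ (they are eigenspaces of the $k$-diagonalizable operator $O_x$). The weights of the form $w+mn$, $n\in\bbZ$, are pairwise distinct because $m\neq 0$ and they lie in the coset $w+m\bbZ$; hence the sum $\sum_{n\in\bbZ} A_{w+mn}=\sum_{n\in\bbZ} t^nA_w$ is direct.

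Now take a general element $\xi=\sum_{n} l_n\otimes t^n$ of $A_w\otimes_k R$ (finite sum, $l_n\in A_w$) with $\nu(\xi)=\sum_n l_nt^n=0$ in $A$. The term $l_nt^n$ lies in $A_{w+mn}$, and these live in distinct eigenspaces of $O_x$, so the directness of the sum $\bigoplus_n A_{w+mn}$ forces $l_nt^n=0$ for every $n$. Finally, multiplication by $t^n$ is an invertible operator on $\mathcal{L}$ (with inverse multiplication by $t^{-n}$, using the $R$-module structure on $\mathcal{L}$ recalled earlier), so $l_nt^n=0$ implies $l_n=0$. Hence $\xi=0$ and $\nu$ is injective, so it is an isomorphism of $k$-vector spaces.

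The only point requiring a moment's care — the "main obstacle," though it is mild — is making sure the indexing set is genuinely in bijection with $\bbZ$: one must note that $n\mapsto w+mn$ is injective on $\bbZ$ precisely because $m\neq 0$ in $k$ (here $m$ is a positive integer and $\Char k=0$), so no two monomials $t^n$ contribute to the same eigenspace and no collapsing occurs. Everything else is the standard fact that distinct eigenspaces of a diagonalizable operator form a direct sum, combined with invertibility of multiplication by $t$ on $\mathcal{L}$; I would phrase the write-up to lean on Lemma \ref{shift} and this directness, keeping it to a few lines.
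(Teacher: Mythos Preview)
Your proof is correct and takes essentially the same approach as the paper: both use Lemma~\ref{shift} to identify $t^nA_w$ with $A_{w+mn}$ and then invoke the directness of the eigenspace decomposition of the diagonalizable operator $O_x$ to conclude injectivity, with surjectivity being immediate. The paper phrases injectivity via a choice of $k$-basis of $A_w$ rather than a general-element argument, but this is a cosmetic difference.
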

\begin{proof}
Clearly, the sum $\sum_n A_{w+mn}$ of vector subspaces $A_{w+mn}$ in
$A$ is a direct sum. Hence
\begin{equation}\label{isomorphism2}
 A_wR=\sum_{n} A_wt^n=\sum_{n} A_{w+mn}=
\bigoplus_{n}A_{w+mn}
\end{equation}
Fix a $k$-basis $\{e_i\}$ of $A_w$. Then $\{e_i\otimes t^j\}$ is
a $k$-basis of $A_w\otimes_k R$. Since $$\nu(e_i\otimes t^n)=e_it^n
\in A_{w+mn}$$
the injectivity of $\nu$ easily follows from (\ref{isomorphism2}).
The surjectivity is also obvious.
\end{proof}

\noindent
{\bf Notation:} We will denote the $R$-span $A_w R$  by $A_{\{w\}}$.

\smallskip

By our construction $A_{\{w\}}$ is an $R$-submodule of
$A$ and
\begin{equation}\label{basis}
A=\bigoplus_{w} A_{\{w\}}
\end{equation}
where the sum is taken over fixed representatives of weight series.
\begin{corollary}\label{finiteness1}  ${\rm dim}_k\, A_w< \infty$.
\end{corollary}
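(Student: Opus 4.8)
$\dim_k A_w < \infty$ for every weight $w$ of $O_x$.

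The plan is to exploit that the $R$-module $A$ is finitely generated, together with the identification of $A_{\{w\}}$ provided by Lemma~\ref{Rspan}.

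First I would record that $\mathcal{L} = L(\dg,\sigma)$ is a free $R$-module of finite rank: grouping the decomposition $\mathcal{L} = \bigoplus_{i \in \mathbb{Z}} \dg_{\overline i} \otimes k[t^{\pm 1}] s^i$ according to the residue of $i$ modulo $m$ exhibits $\mathcal{L} \simeq \bigoplus_{r=0}^{m-1} \dg_{\overline r} \otimes_k t^{r/m} R$, which is free over $R$ of rank $\dim_k \dg$. Since $R = k[t^{\pm 1}]$ is Noetherian and $A$ is an $R$-submodule of $\mathcal{L}$, the module $A$ is finitely generated over $R$. By the decomposition (\ref{basis}), each $A_{\{w\}}$ is then an $R$-submodule of the Noetherian module $A$, hence itself finitely generated over $R$.

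Next I would invoke Lemma~\ref{Rspan}: the multiplication map $\nu \colon A_w \otimes_k R \to A_{\{w\}}$ is an isomorphism, and it is $R$-linear by construction, so $A_{\{w\}} \simeq A_w \otimes_k R$ as $R$-modules. Choosing a $k$-point of $\Spec R$, say $t \mapsto 1$, gives a ring homomorphism $R \to k$, and tensoring over $R$ with $k$ yields $A_{\{w\}} \otimes_R k \simeq (A_w \otimes_k R) \otimes_R k \simeq A_w$. As $A_{\{w\}}$ is finitely generated over $R$, the $k$-vector space $A_{\{w\}} \otimes_R k$ is finite-dimensional; therefore $\dim_k A_w < \infty$.

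The argument presents essentially no obstacle; the only point to state with care is the finite generation of $A$ over $R$, which rests solely on $R$ being Noetherian and $\mathcal{L}$ being finitely generated over $R$. One could equally well avoid the base change by observing that a free $R$-module of infinite rank is never finitely generated over the nonzero commutative ring $R$, so $A_{\{w\}} \simeq A_w \otimes_k R$ being finitely generated already forces $\dim_k A_w < \infty$.
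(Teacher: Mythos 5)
Your proof is correct and follows essentially the same route as the paper: both arguments rest on Lemma~\ref{Rspan}, which identifies $A_w\otimes_k R$ with the $R$-submodule $A_{\{w\}}=A_wR$ of the finite-rank $R$-module $A$. The paper finishes by comparing $R$-ranks, namely ${\rm dim}_k\,A_w={\rm rank}_R(A_w\otimes_k R)={\rm rank}_R\,A_{\{w\}}\leq {\rm rank}_R\,A<\infty$, whereas you finish via Noetherian finite generation and specialization at $t=1$; these are interchangeable final steps, and your explicit justification that $A$ is finitely generated over $R$ fills in a point the paper leaves implicit.
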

\begin{proof} Indeed, by the above lemma we have
$$
{\rm dim}_k\, A_w={\rm rank}_R\,(A_w\otimes_kR)=
{\rm rank}_R\,A_wR= {\rm rank}_R\, A_{\{w\}} \leq {\rm rank}_R \,A < \infty,
$$
as required.
\end{proof}
\begin{corollary}\label{finiteness} There are finitely many weight series.
\end{corollary}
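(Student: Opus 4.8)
The plan is to deduce finiteness of the set of weight series from the decomposition (\ref{basis}) by a rank count over $R$, not over $k$ (the latter is useless here since $A$ is infinite dimensional over $k$). First I would recall that $A$ is a finitely generated free $R$-module of finite rank: it is an $R$-submodule of $\mathcal{L}$, which is itself free of finite rank over the PID $R=k[t^{\pm 1}]$, and this is precisely the finiteness ${\rm rank}_R A<\infty$ already invoked in Corollary~\ref{finiteness1}. Next, I would apply ${\rm rank}_R(-)$ to the identity $A=\bigoplus_{w} A_{\{w\}}$ of (\ref{basis}), where the sum runs over a fixed set of representatives of the weight series. Since each $A_{\{w\}}=A_wR$ is an $R$-submodule and the sum is direct as $R$-modules, ranks add:
$$
{\rm rank}_R A=\sum_{w}{\rm rank}_R A_{\{w\}}.
$$

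Then by Lemma~\ref{Rspan} one has ${\rm rank}_R A_{\{w\}}={\rm rank}_R(A_w\otimes_k R)=\dim_k A_w$, and because each representative $w$ is by definition a weight of $O_x$, the space $A_w$ is nonzero, so each summand is at least $1$. Hence the number of weight series is at most ${\rm rank}_R A$, which is finite. This is the whole argument; it is essentially a one-line bookkeeping once Lemma~\ref{Rspan} and the decomposition (\ref{basis}) are available.

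The only point that genuinely needs to be nailed down is that (\ref{basis}) is a direct sum of \emph{$R$-modules}, so that $R$-ranks are additive. This is immediate from the construction: distinct weight series are disjoint subsets of $k$, the eigenspaces $A_{w'}$ of $O_x$ for distinct eigenvalues $w'$ are linearly independent, and each $A_{\{w\}}=\bigoplus_{n} A_{w+mn}$ collects exactly the eigenspaces in one series; regrouping the eigenspace decomposition $A=\bigoplus_{w'}A_{w'}$ by series gives (\ref{basis}), and each $A_{\{w\}}$ is visibly stable under multiplication by $t^{\pm 1}$, i.e.\ is an $R$-submodule. So no real obstacle arises; the substance of the section lies in the earlier lemmas, and this corollary is their immediate payoff.
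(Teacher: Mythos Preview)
Your proof is correct and takes essentially the same approach as the paper: the paper's proof is the single sentence ``This follows from the fact that $A$ is a free $R$-module of finite rank,'' and your argument is precisely the unpacking of that sentence via the rank count on the decomposition~(\ref{basis}).
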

\begin{proof} This follows from the fact that $A$ is a free
$R$-module of finite rank.
\end{proof}
\begin{lemma}\label{sumtwoweights}
Let $w_1,w_2$ be
weights of $O_x$. Then $[A_{w_1},A_{w_2}]\subset
A_{w_1+w_2}.$
\end{lemma}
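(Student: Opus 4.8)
The plan is to reduce the bracket $[A_{w_1},A_{w_2}]_{\widehat A}$ to an already-understood computation inside the operator $O_x$ by using the Jacobi identity for $\operatorname{ad}(x)$ together with the now-familiar comparison between the brackets $[\,\cdot\,,\cdot\,]$ and $[\,\cdot\,,\cdot\,]_{\widehat A}$. Concretely, let $y_1\in A_{w_1}$ and $y_2\in A_{w_2}$, viewed as elements of $\widehat A$ via $A\hookrightarrow\widetilde A\hookrightarrow\widehat A$. I first form $[y_1,y_2]_{\widehat A}$; by \eqref{difference} this differs from the ``ambient'' product $[y_1,y_2]\in A$ only by a central term $ac$ with $a\in k$, so $[y_1,y_2]_{\widehat A}=[y_1,y_2]-ac\in\widetilde A$. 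Applying $\lambda:\widetilde A\to A$ kills the $c$, so $\lambda([y_1,y_2]_{\widehat A})=[y_1,y_2]$; hence it suffices to show $[y_1,y_2]\in A_{w_1+w_2}$, i.e.\ $O_x([y_1,y_2])=(w_1+w_2)[y_1,y_2]$.

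The computation of $O_x([y_1,y_2])$ is the Jacobi identity applied to $x,y_1,y_2$ inside $\widehat A$. We have $\operatorname{ad}(x)([y_1,y_2]_{\widehat A})=[\operatorname{ad}(x)(y_1),y_2]_{\widehat A}+[y_1,\operatorname{ad}(x)(y_2)]_{\widehat A}$. Now $\operatorname{ad}(x)(y_i)=[x,y_i]_{\widehat A}$, and since $O_x(y_i)=w_iy_i$ with $w_i$ possibly zero we must be a little careful: if $w_i\neq 0$ then $[x,y_i]_{\widehat A}=w_iy_i$ exactly (no central correction, because $\widetilde A_{w_i}$ contains no multiple of $c$, as used repeatedly above), whereas if $w_i=0$ we only know $[x,y_i]_{\widehat A}\in\widetilde A_0$, i.e.\ it equals $0$ plus possibly a central term. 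In either case, though, after applying $\lambda$ the central corrections vanish and the bilinearity of the bracket $[\,\cdot\,,\cdot\,]_{\widehat A}$ in $\widehat A$ gives $\lambda\bigl([[x,y_1]_{\widehat A},y_2]_{\widehat A}\bigr)=w_1[y_1,y_2]$ and similarly $\lambda\bigl([y_1,[x,y_2]_{\widehat A}]_{\widehat A}\bigr)=w_2[y_1,y_2]$, since $\lambda(\operatorname{ad}(x)(y_i))=O_x(y_i)=w_iy_i$ and $\operatorname{ad}(x)([y_1,y_2]_{\widehat A})$ lies in $\widetilde A$ so its $\lambda$-image is $O_x$ of $\lambda([y_1,y_2]_{\widehat A})=O_x([y_1,y_2])$. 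Combining, $O_x([y_1,y_2])=(w_1+w_2)[y_1,y_2]$, which is exactly the assertion $[A_{w_1},A_{w_2}]\subset A_{w_1+w_2}$.

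The only mildly delicate point — and the one I would write out carefully — is the bookkeeping of the central $c$-corrections when passing back and forth between $[\,\cdot\,,\cdot\,]$ and $[\,\cdot\,,\cdot\,]_{\widehat A}$ and between $\widetilde A$ and $A$; everything collapses once one observes that $\lambda$ is a Lie algebra homomorphism from $\widetilde A$ onto $A$ (with kernel $kc$) that intertwines $\operatorname{ad}(x)|_{\widetilde A}$ with $O_x$, so really the whole statement is just ``$O_x$ is a derivation of the Lie algebra $A$'' applied to the two eigenvectors $y_1,y_2$. I expect no genuine obstacle here; the lemma is a routine consequence of the preceding setup, and the proof is a short Jacobi-identity computation modulo the centre.
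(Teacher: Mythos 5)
Your proof is correct, and since the paper's own proof of this lemma is literally ``This is straightforward to check,'' your argument is exactly the routine verification the authors had in mind: $\lambda$ intertwines $\operatorname{ad}(x)|_{\widetilde A}$ with $O_x$, the Jacobi identity in $\widehat A$ gives the derivation property, and all central corrections die either under $\lambda$ or upon bracketing with another element. One small inaccuracy worth fixing in a write-up: your parenthetical claim that $[x,y_i]_{\widehat A}=w_iy_i$ with \emph{no} central term when $w_i\neq 0$ is not justified (the canonical lift of $y_i$ into $A\subset\widetilde A$ need not lie in $\widetilde A_{w_i}$, only in $\widetilde A_{w_i}\oplus kc$), but this is harmless since, as you note, the central part is annihilated in the subsequent bracket anyway.
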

\begin{proof} This is straightforward to check.
\end{proof}

\section{Weight zero subspace}

\begin{theorem}\label{zeroweight}  $A_0\not=0$.
\end{theorem}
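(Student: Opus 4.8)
The plan is to argue by contradiction, assuming $A_0=0$, i.e.\ that $0$ is not a weight of $O_x$. First I would extract the consequences for the weight structure. Since $A_v\neq 0$ precisely when $v$ is a weight of $O_x$, Lemma~\ref{shift} read contrapositively shows that \emph{no} weight of $O_x$ lies in $m\mathbb{Z}$; in particular the set $W$ of weights of $O_x$ is non-empty (because $A\neq 0$), satisfies $0\notin W$, $W=-W$ (Lemma~\ref{opposite weight}) and $W=W+m\mathbb{Z}$ (Lemma~\ref{shift}), and $[A_{w_1},A_{w_2}]\subseteq A_{w_1+w_2}$ (Lemma~\ref{sumtwoweights}).

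Next I would repackage this as a grading. Using the decomposition $A=\bigoplus_{[w]}A_{\{w\}}$ of (\ref{basis}), where $[w]$ runs over the finitely many classes in $W/m\mathbb{Z}$ (Corollary~\ref{finiteness}) and each $A_{\{w\}}=A_wR$ is a free $R$-module, together with Lemma~\ref{sumtwoweights}, the $R$-Lie algebra $A$ acquires a grading by the finitely generated abelian group $\bar\Lambda:=\langle\,[w]\ :\ w\in W\,\rangle\subseteq k/m\mathbb{Z}$, with trivial neutral component $A_{\{0\}}=0$ since $0\notin W$. Tensoring with $K$, the $K$-Lie algebra $A_K=A\otimes_RK$, which is semisimple by hypothesis (a), inherits a $\bar\Lambda$-grading whose neutral component $A_{\{0\}}\otimes_RK$ is zero.

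The heart of the argument is to contradict this. The key point is that the torsion subgroup of $k/m\mathbb{Z}$ is $\mathbb{Q}/m\mathbb{Z}\cong\mathbb{Q}/\mathbb{Z}$, all of whose finite subgroups are cyclic; hence $\bar\Lambda\cong\mathbb{Z}^{b}\times\mathbb{Z}/d$ for some $b\geq 0$, $d\geq 1$. After extending scalars to $\overline{K}$ we may assume $A_K$ split. The free part $\mathbb{Z}^{b}$ of the grading corresponds to an action of a split torus $T$ on $A_K$ by automorphisms, and since $A_K$ is semisimple $\mathfrak{f}:=A_K^{T}=\mathfrak{z}_{A_K}(T)$ is reductive and non-zero (it contains a Cartan subalgebra). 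Now $\mathfrak{f}$ carries the induced $\mathbb{Z}/d$-grading, whose neutral component equals that of the whole grading, namely $0$. If $[\mathfrak{f},\mathfrak{f}]\neq 0$ this is a non-zero semisimple Lie algebra equipped with a finite order automorphism, which necessarily has non-zero fixed subalgebra; if $\mathfrak{f}$ is abelian, then $T$ is a regular, hence maximal, torus, $\mathfrak{f}$ is a Cartan subalgebra, and the order-$d$ automorphism defining the $\mathbb{Z}/d$-grading centralizes $T$, hence lies in $T$ (a maximal torus is its own centralizer in $\operatorname{Aut}(A_K)$, because a non-trivial diagram automorphism acts non-trivially on the character lattice), so it acts trivially on $\mathfrak{f}=\operatorname{Lie}(T)$. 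In either case the neutral component of $\mathfrak{f}$ is non-zero, a contradiction, which proves $A_0\neq 0$.

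I expect the last step to be the main obstacle. The subtlety is that a grading of a semisimple Lie algebra by an arbitrary abelian group need \emph{not} have non-zero neutral component (witness the $(\mathbb{Z}/2)^2$-grading of $\mathfrak{sl}_2$ by $h,\,e+f,\,e-f$); it is essential here that $\bar\Lambda$, being a subgroup of $k/m\mathbb{Z}$, has \emph{cyclic} torsion part, so that one can split off a torus and is left with a single finite cyclic automorphism of a reductive Lie algebra. The invariant form (hypothesis (b)) enters only through Lemma~\ref{opposite weight}, to keep the bookkeeping on $W$ symmetric; the substantive inputs are the semisimplicity of $A_K$ and the shift lemmas of the previous section.
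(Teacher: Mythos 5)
Your strategy is genuinely different from the paper's and, in outline, it works; but one step as written is wrong and needs repair. The paper stays entirely elementary: assuming $A_0=0$ it shows (via the Killing form of the $R$-algebra $A$, the orthogonality of non-opposite weight series, and the observation that $ad(y)$ nilpotent would force $y$ into the radical of the form) that every nonzero $y\in A_w$ has non-nilpotent $ad(y)$, hence each weight $w$ sits in a cycle $w',w'+w,\dots,w'+lw$ with $lw\in m\mathbb{Z}$, so all weights are rational; choosing representatives $0<w_1<\dots<w_s<m$ and running the cycle argument for $w_1$ then produces a weight strictly between $m-w_1$ and $m$, a contradiction. Your route instead packages (\ref{basis}) and Lemma~\ref{sumtwoweights} as a $\bar\Lambda$-grading of the semisimple $\overline{K}$-algebra $A_{\overline K}$ with $\bar\Lambda\cong\mathbb{Z}^b\times\mathbb{Z}/d$ (correctly using that the torsion of $k/m\mathbb{Z}$ is $\mathbb{Q}/m\mathbb{Z}$, so cyclic --- and your $\mathfrak{sl}_2$ example shows why this is the crucial point), and argues that such a grading must have nonzero neutral component. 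This buys conceptual clarity at the price of importing structure theory: the statement that a finite-order automorphism of a nonzero semisimple Lie algebra in characteristic $0$ has nonzero fixed subalgebra is true but not free (it is Steinberg's theorem on semisimple automorphisms, or Kac's classification in \cite{Kac}, Ch.~8), whereas the paper's proof is self-contained.

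The concrete error is in your abelian case. A torus $T$ with $\mathfrak{z}_{A_{\overline K}}(T)$ a Cartan subalgebra is \emph{regular}, but not thereby \emph{maximal}: $\dim T\le b$ can be smaller than the rank. Consequently $\theta$ centralizes $T$ but need not centralize the maximal torus $T'=Z(T)$, and need not act trivially on $\mathfrak{f}=\mathrm{Lie}(T')$. For instance, in $\mathfrak{sl}_3$ with $T=\{\mathrm{diag}(s,s^{-1},1)\}$, the composite of the outer automorphism $g\mapsto{}^t g^{-1}$ with the Weyl reflection swapping the first two coordinates fixes $T$ pointwise yet acts on the Cartan subalgebra with only a one-dimensional fixed space. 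The conclusion you want is nevertheless true, and the repair is short: if $b\ge 1$ then $T$ acts faithfully (its characters generate $\mathbb{Z}^b$ because the support of the grading generates $\bar\Lambda$), so $\mathrm{Lie}(T)$ is a nonzero subspace of $A_{\overline K}$ centralized by $T$ and fixed by $\theta$, i.e.\ it already lies in the neutral component; and if $b=0$ then $\mathfrak{f}=A_{\overline K}$ is semisimple, so it cannot be abelian and the first case applies. With that substitution (which in fact also lets you bypass the abelian/non-abelian dichotomy for $b\ge1$), your proof is complete.
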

\begin{proof} Assume that $A_0=0$. Then, by Lemma~\ref{shift},
 $A_{mn}=0$ for all
$n\in \mathbb{Z}$. It follows that for any weight $w$, any integer $n$
and all $y\in A_w$, $z\in A_{-w+mn}$ we have $[y,z]=0.$
Indeed
\begin{equation}\label{commuting}
[A_w,A_{-w+mn}]\subset A_{w+(-w)+mn}=A_{mn}=0.
\end{equation}

For $y\in A$ the operator $ad(y):A\to A$ may be viewed
as a $k$-operator or as an $R$-operator. When we deal with
the Killing form $\langle-,-\rangle$ on the $R$-Lie algebra $A$
we will view $ad(y)$ as an $R$-operator
of $A$.

\begin{lemma}\label{orthogonality1} Let $w_1,\,w_2$ be weights of
$ad(x)$ such that
 $\{w_1\}\not=\{-w_2\}$.
Then for any integer $n$ and all $y\in A_{w_1}$ and $z\in A_{w_2+mn}$
we have $\langle\,y,z\,\rangle=0$.
\end{lemma}
\begin{proof}  Let $w$ be a weight of $ad(x)$. By our condition we have
$\{w\}\not=\{ w+w_1+w_2\}$. Since
$(ad(y)\circ ad(z))(A_{\{w\}})\subset A_{\{w+w_1+w_2\}}$, in any
$R$-basis of $A$ corresponding to the decomposition
(\ref{basis}) the operator $ad(y)\circ ad(z)$ has zeroes on the diagonal,
hence ${\rm Tr}\,(ad(y)\circ ad(z))=0$.
\end{proof}
\begin{lemma}\label{orthogonality2}
Let $w$ be a weight of $ad(x)$, $n$ be an integer  and let
$y\in A_w$. Assume that $ad(y)$ viewed as an $R$-operator of $A$ is nilpotent.
Then for every $z\in A_{-w+mn}$ we have $\langle\,y,z\,\rangle=0$.
\end{lemma}
\begin{proof} Indeed, let $l$ be such that $(ad(y))^l=0$. Since by
(\ref{commuting}),
$ad(y)$ and $ad(z)$ are commuting operators we have
$$
(ad(y)\circ ad(z))^l=(ad(y))^l\circ (ad(z))^l=0.
$$
Therefore $ad(y)\circ ad(z)$ is  nilpotent and this implies
its trace is zero.
\end{proof}

Since the Killing form is nondegenerate, it follows immediately from the
above two lemmas
that for every nonzero element $y\in A_w$
the operator $ad(y)$ is not nilpotent. Recall that by Lemma~\ref{sumtwoweights}
we have $ad(y)(A_{w'})\subset A_{w+w'}$. Hence taking into consideration
Corollary~\ref{finiteness} we conclude that there exits a weight $w'$
and a positive integer $l$ such that
$$
ad(y)(A_{\{w'\}})\not=0,\ (ad(y)\circ ad(y))(A_{\{w'\}})\not=0,\ldots,
(ad(y))^l(A_{\{w'\}})\not=0
$$
and $(ad(y)^l(A_{\{w'\}})\subset A_{\{w'\}}$.
We may assume that $l$ is the smallest positive integer satisfying
these conditions.
Then all consecutive scalars
\begin{equation}\label{consecutive}
w',\,w'+w,\,w'+2w,\ldots,w'+lw
\end{equation}
are
weights of $ad(x)$, $\{w'+iw\}\not=\{w'+(i+1)w\}$ for $i<l$ and
$\{w'\}=\{w'+lw\}$.
In particular, we automatically get that
$lw$ is an integer (divisible by $m$) which in turn implies that $w$ is
a rational number.

Thus,
under our
assumption $A_0=0$ we have proved that all weights of $ad(x)$ are
rational numbers.
We now choose  (in a unique way) representatives
$w_1,\ldots,w_s$ of all weight series such that $0< w_i<m$ and up to renumbering
we may assume that  $$0<w_1< w_2< \cdots < w_s< m.$$

\begin{remark}
Recall that for any weight $w_i$, the scalar $-w_i$ is also a weight.
Since $0< -w_i+m < m$
the representative of the  weight series $\{-w_i\}$
is $m-w_i$. Then the inequality $m-w_i\geq w_1$ implies $m-w_1\geq w_i$.
Hence out of necessity  we have $w_s=m-w_1$.
\end{remark}

We now apply the observation (\ref{consecutive}) to the weight $w=w_1$.
Let $w'=w_i$ be  as in (\ref{consecutive}). Choose the integer $j\geq 0$
such that $w_i+jw_1$, $w_i+(j+1)w_1$  are weights and
$w_i+jw_1< m$, but $w_i+(j+1)w_1\geq m$. We note that
since $m$ is not a weight of $ad(x)$
we automatically obtain $w_i+(j+1)w_1> m$. Furthermore,
we have $w_i+jw_1\leq w_s=m-w_1$
(because $w_i+jw_1$
is a weight of $ad(x)$). This implies $$
m < w_i+(j+1)w_1\leq w_s+w_1=m-w_1+w_1=m
$$
-- a contradiction  that completes the proof of the theorem.
\end{proof}

\section{A lower bound of dimensions of MADs in $\widehat{\mathcal{L}}$}

\begin{theorem}\label{lower bound of dimension theorem} Let $\gm\subset
\widehat{\mathcal{L}}$
be a MAD. Then
${\rm dim}\,\gm\geq 3$.
\end{theorem}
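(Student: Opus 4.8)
The strategy is to exhibit a diagonalizable subalgebra of dimension $3$ inside $\widehat{\mathcal{L}}$, which then forces any MAD $\gm$ to have dimension $\geq 3$ by maximality. The natural candidate is of course $\mathcal{H} = \dh_{\overline{0}} \oplus kc \oplus kd$ from Example~\ref{example}, but the subtlety is that we want a statement valid for an arbitrary MAD $\gm$, not just those conjugate to $\mathcal{H}$. So instead I would argue directly: start with $\gm$, show it contains $kc$, show that it cannot be contained in $\widetilde{\mathcal{L}}$ (this is precisely Lemma~\ref{notproper}), and then build up dimension from there. Concretely, since $\gm \not\subset \widetilde{\mathcal{L}}$, there is an element $x = x' + ac + bd \in \gm$ with $b \neq 0$; rescaling, we may take $x = x' + d$ (absorbing $ac$ is harmless since $c \in \gm$), and $x$ is $k$-diagonalizable as an element of $\widehat{\mathcal{L}}$ since $\gm$ is a MAD. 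This puts us exactly in the setting of Sections~9--10 with $\widehat{A} = \widehat{\mathcal{L}}$.

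The first key step is the case $x' \neq 0$: here Theorem~\ref{zeroweight} applies and gives $A_0 \neq 0$, i.e. the weight-zero space of $O_x$ in $\mathcal{L}$ is nonzero. Pick $0 \neq y \in \mathcal{L}$ with $[x,y]_{\widehat{\mathcal{L}}} \in kc$; adjusting $y$ by a central element (using (\ref{difference})) we get an honest eigenvector. Now I claim $\langle kc, x, y \rangle$ — more precisely a suitable $3$-dimensional space containing $c$ — is diagonalizable and commutative, which contradicts $\gm$ being maximal unless $\dim \gm \geq 3$. Actually the cleanest route: show that the centralizer of $x$ in $\gm$ together with $c$ and $x$ itself already has dimension $\geq 3$. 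The point is that $\gm$ contains $c$; it contains $x$ (with $x' + d$ part, linearly independent from $c$); and one must produce a third independent element. If $\dim \gm = 2$, then $\gm = kc \oplus kx$, and every element of $\mathcal{L} \cap \ker(ad\,x)$ (which is nonzero by Theorem~\ref{zeroweight}, after the central correction) would have to lie in $\gm$ — but such an element lies in $\widetilde{\mathcal{L}}$ and is independent of $c$ and $x$, contradiction. The remaining case $x' = 0$, i.e. $x = d$ (after translation by $c$), is easier: then $O_x$ acts on $\mathcal{L}$ with weight-zero space $\dh_{\overline 0} \oplus (\text{stuff})$, which is at least one-dimensional since $\dg_{\overline 0} \neq 0$, and the same dimension count applies; moreover $\dim_k \dg_{\overline 0} \otimes_k k \geq \rank \dg \geq 1$, and in fact $\dim \widetilde{\mathcal{L}}_0 > 1$ here so Lemma~\ref{w=0} is available too.

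The main obstacle I anticipate is handling the weight-zero eigenvectors correctly \emph{inside $\widehat{\mathcal{L}}$ rather than $\mathcal{L}$}: Theorem~\ref{zeroweight} produces $y \in A_0 \subset \mathcal{L}$, but $[x,y]_{\widehat{\mathcal{L}}}$ need not vanish — it equals $bc$ for some $b \in k$ by (\ref{difference}), and $O_x$ is the induced operator on $\mathcal{L} \simeq \widetilde{\mathcal{L}}/kc$. One must check that the genuine $ad(x)$-eigenvector in $\widehat{\mathcal{L}}$ lifting $y$, which lives in $\widetilde{\mathcal{L}}_0 = \widetilde{A}_0$ by Lemma~\ref{insider}(b), is linearly independent from both $c$ and $x$; this is clear because it projects to the nonzero $y \in \mathcal{L}$ whereas $c$ projects to $0$ and $x = x' + d$ projects to $x' \in \mathcal{L}$ with the $d$-component distinguishing it. Then $\{c, \text{(lift of } y), x\}$ spans a $3$-dimensional subspace of the centralizer of $x$ in $\widehat{\mathcal{L}}$; since $\gm$ is a MAD containing $c$ and $x$ and all of this subspace commutes with $x$ and is $k$-diagonalizable (being built from eigenvectors), maximality of $\gm$ forces it to contain the lift of $y$ as well, giving $\dim \gm \geq 3$. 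The only other thing to be careful about is that the lift of $y$ commutes with \emph{all} of $\gm$, not just $x$ — but this follows since $\gm$ is abelian and, being a MAD, equals its own centralizer among diagonalizable elements, so any diagonalizable element commuting with the generators of $\gm$ that can be simultaneously diagonalized with $\gm$ must lie in $\gm$. (Here one uses that $x' + d$ generates enough of $\gm$'s action, together with Lemma~\ref{shift} to see the eigenspace structure is rigid; the case $\dim \gm = 1$ is excluded since $\gm \supsetneq kc$ by Lemma~\ref{notproper}.)
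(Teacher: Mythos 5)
Your overall skeleton matches the paper's: use Lemma~\ref{notproper} to get $x=x'+d\in\gm$ together with $c$, assume $\gm=\langle x,c\rangle$, invoke Theorem~\ref{zeroweight} to get a nonzero $y\in\mathcal{L}_0$, and enlarge $\gm$ by (a lift of) $y$ to contradict maximality. But there is a genuine gap at the decisive step. To contradict maximality you must exhibit a \emph{diagonalizable} subalgebra strictly containing $\gm$, i.e.\ the enlarged algebra $\langle c,x,y\rangle$ must give a simultaneous eigenspace decomposition of all of $\widehat{\mathcal{L}}$; for that it is not enough that $y$ is an eigenvector of $ad(x)$ --- you need the operator $ad(y)$ itself to be $k$-diagonalizable on $\widehat{\mathcal{L}}$. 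Your parenthetical justification (``all of this subspace \dots is $k$-diagonalizable, being built from eigenvectors'') conflates these two things: an arbitrary nonzero $y\in\mathcal{L}_0$ may perfectly well be ad-nilpotent, in which case $\gm\oplus ky$ is abelian but not diagonalizable and no contradiction results. For the same reason your ``cleanest route'' fails: maximality of $\gm$ does not force $\gm$ to contain every element of $\mathcal{L}\cap\ker(ad\,x)$, only those $k$-diagonalizable elements which extend $\gm$ to a diagonalizable subalgebra.

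This missing step is in fact the heart of the paper's argument: Proposition~\ref{diagonalizable element in A} shows that $\mathcal{L}_0$ contains a nonzero $y$ with $ad(y)$ $k$-diagonalizable on $\mathcal{L}$, and its proof requires real work in three steps: (i) if $y\in\mathcal{L}_0$ is semisimple as an element of $\mathcal{L}_K$ then $ad(y)$ is $k$-diagonalizable, via an $R$-basis adapted to the weight spaces of $O_x$; (ii) the case where every element of $\mathcal{L}_0$ is nilpotent is excluded by a trace argument (a central element of the nilpotent Lie algebra $\mathcal{L}_0$ would lie in the radical of the Killing form, using Lemma~\ref{orthogonality1}); (iii) if some $y\in\mathcal{L}_0$ has nontrivial semisimple part $y_s$ in $\mathcal{L}_K$, one shows $y_s$ actually lies in $\mathcal{L}_0$. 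Only after this does Lemma~\ref{diagonalizable element in A^} upgrade diagonalizability from $\mathcal{L}$ to $\widehat{\mathcal{L}}$ --- and even that upgrade is not automatic: it uses the invariant bilinear form to kill the central corrections $b_ic$, a point you gesture at but do not carry out. Without Proposition~\ref{diagonalizable element in A} or an equivalent substitute, your argument does not close.
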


By Lemma~\ref{notproper},
$\gm$ contains an element $x$
of the form $x=x'+d$ where $x'\in \mathcal{L}$ and it also contains $c$.
Since $x$ and $c$ generate
a subspace of $\gm$ of dimension $2$
the statement of the theorem is equivalent to
$\langle\,x,c\,\rangle\not=\gm$.

Assume the contrary: $\langle\,x,c\,\rangle=\gm$.
Since $\gm$ is $k$-diagonalizable we have the weight
 space decomposition
$$
\widehat{\mathcal{L}}=\bigoplus_{\alpha} \widehat{\mathcal{L}}_{\alpha}
$$
where the sum is taken over linear mappings
$\alpha\in \gm^*={\rm Hom}\,(\gm,k)$.
To find a contradiction
we first make some
simple observations about the structure of the corresponding eigenspace
$\widehat{\mathcal{L}}_{0}$.

If $\widehat{\mathcal{L}}_{\alpha}\not=0$, it easily follows
that $\alpha(c)=0$ (because $c$ is in the center of $\widehat{\mathcal{L}}$).
Then $\alpha$ is determined uniquely by the value $w=\alpha(x)$
and so instead of $\widehat{\mathcal{L}}_{\alpha}$ we will write
$\widehat{\mathcal{L}}_w$.

Recall that
by Theorem~\ref{zeroweight}, $\mathcal{L}_0\not=0$.
Our aim is first to show that $\mathcal{L}_0$ contains a nonzero
element $y$ such
that the adjoint operator $ad(y)$ of $\mathcal{L}$ is $k$-diagonalizable.
We will next see that $y$  necessarily
commutes with $x$ viewed as an element in $\widehat{\mathcal{L}}$ and that
it is $k$-diagonalizable in $\widehat{\mathcal{L}}$ as well. It then
follows that
the subspace in $\widehat{\mathcal{L}}$ spanned by $c,\,x$ and $y$
is a commutative
$k$-diagonalizable subalgebra and this contradicts the fact that
$\gm$ is a MAD.

\begin{lemma}~\label{x commutes with y in A^} Let $y\in\mathcal{L}$
be nonzero such that $O_x(y)=0$.
Then $[x,y]_{\widehat{\mathcal{L}}}=0$.
\end{lemma}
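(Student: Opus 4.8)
The plan is to unwind the definition of $O_x$ and use the explicit formulas from the earlier sections, paying careful attention to the central term. Recall that $O_x$ is the operator induced on $\mathcal{L} \simeq \widetilde{\mathcal{L}}/kc$ by $ad(x)|_{\widetilde{\mathcal{L}}}$, where the isomorphism comes from the projection $\lambda : \widetilde{\mathcal{L}} \to \mathcal{L}$. So the hypothesis $O_x(y) = 0$ says precisely that $\lambda([x,y]_{\widehat{\mathcal{L}}}) = 0$, i.e. $[x,y]_{\widehat{\mathcal{L}}} \in kc$. Writing $x = x' + d$ with $x' \in \mathcal{L}$, we have $[x,y]_{\widehat{\mathcal{L}}} = [x',y]_{\widehat{\mathcal{L}}} + [d,y]_{\widehat{\mathcal{L}}}$. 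By (\ref{difference}) the bracket $[x',y]_{\widehat{\mathcal{L}}}$ differs from the $\mathcal{L}$-bracket $[x',y]$ by an element of $kc$, and $[x',y] \in \mathcal{L}$; also $[d,y]_{\widehat{\mathcal{L}}} \in \mathcal{L}$ since $d$ is the degree derivation. Thus the $\mathcal{L}$-component of $[x,y]_{\widehat{\mathcal{L}}}$ is $[x',y] + [d,y]_{\widehat{\mathcal{L}}}$, and $O_x(y) = 0$ forces this component to vanish: $[x',y] + [d,y]_{\widehat{\mathcal{L}}} = 0$ in $\mathcal{L}$.

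Hence $[x,y]_{\widehat{\mathcal{L}}} = \beta c$ for some scalar $\beta \in k$, and it remains to show $\beta = 0$. First I would use the invariant nondegenerate bilinear form $(\cdot,\cdot)$ on $\widehat{\mathcal{L}}$. By invariance, $(x, [x,y]_{\widehat{\mathcal{L}}}) = ([x,x]_{\widehat{\mathcal{L}}}, y) = 0$. On the other hand $(x, [x,y]_{\widehat{\mathcal{L}}}) = (x, \beta c) = \beta (x', c) + \beta (d, c) = \beta (d,c)$, since $(x',c) = 0$ for $x' \in \mathcal{L}$ and $(d,c) = \beta_0 \in k^\times$ is the nonzero pairing recorded after Remark~\ref{restrictioninvariantform}. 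Therefore $\beta \beta_0 = 0$, and since $\beta_0 \neq 0$ we get $\beta = 0$, i.e. $[x,y]_{\widehat{\mathcal{L}}} = 0$.

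The only genuinely delicate point is making sure the bookkeeping of central terms is correct: one must know that $[x,x]_{\widehat{\mathcal{L}}} = 0$ (immediate from antisymmetry of the bracket on $\widehat{\mathcal{L}}$), that $c$ pairs to zero with everything in $\mathcal{L}$ and with itself, and that $(c,d) \neq 0$ — all of which are exactly the properties of the form listed in \S\ref{Some properties of affine Kac-Moody Lie algebras}. I expect no real obstacle here; the lemma is essentially the observation that $\widehat{\mathcal{L}}$-commutation with $x$ is detected by $O_x$ together with a one-line nondegeneracy argument killing the residual central scalar. I would present it in three short steps: (1) reduce $O_x(y) = 0$ to $[x,y]_{\widehat{\mathcal{L}}} \in kc$; (2) pair with $x$ using invariance to kill the scalar; (3) conclude.
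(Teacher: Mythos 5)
Your proof is correct and follows essentially the same route as the paper: reduce $O_x(y)=0$ to $[x,y]_{\widehat{\mathcal{L}}}\in kc$, then pair with $x$ and use invariance of the form together with $(d,c)\neq 0$ to kill the central scalar. The only difference is that you spell out the first reduction in more detail (and relabel the paper's constant $\beta=(c,d)$ as $\beta_0$ to avoid a clash), which is harmless.
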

\begin{proof}
Assume that $[x,y]_{\widehat{\mathcal{L}}}=bc\not=0$. Then
$$
(x,[x,y]_{\widehat{\mathcal{L}}})=(x,bc)=(x'+d,bc)=(d,bc)=\beta b\not=0.
$$
On the other hand, since the form is invariant we get
$$
(x,[x,y]_{\widehat{\mathcal{L}}})=([x,x]_{\widehat{\mathcal{L}}},y)=(0,y)=0
$$
-- a contradiction which completes the proof.
\end{proof}
\begin{lemma}~\label{diagonalizable element in A^} Assume that $y\in
\mathcal{L}_0$ is nonzero
and that the adjoint
operator $ad(y)$ of $\mathcal{L}$ is $k$-diagonalizable. Then $ad(y)$
viewed as an operator of $\widehat{\mathcal{L}}$ is also $k$-diagonalizable.
\end{lemma}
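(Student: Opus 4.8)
The goal is to show that if $y \in \mathcal{L}_0$ has $ad(y)$ diagonalizable as an operator of $\mathcal{L}$, then $ad(y)$ is diagonalizable as an operator of the full $\widehat{\mathcal{L}} = \mathcal{L} \oplus kc \oplus kd$. The plan is to write down a basis of eigenvectors of $ad(y)$ explicitly. First I would record the easy part: $c$ is central, so $ad(y)(c) = 0$, and the eigenspace decomposition of $\mathcal{L}$ under $ad(y)$ gives, together with $c$, a basis of eigenvectors for $\mathcal{L} \oplus kc = \widetilde{\mathcal{L}}$; so the only thing to handle is the line $kd$, i.e. the single vector $d$.

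\smallskip

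\textbf{Locating $d$.} Since $y \in \mathcal{L}_0$, by Lemma~\ref{x commutes with y in A^} we have $[x,y]_{\widehat{\mathcal{L}}} = 0$, and writing $x = x' + d$ this gives $[d,y]_{\widehat{\mathcal{L}}} = -[x',y]_{\widehat{\mathcal{L}}} \in \widetilde{\mathcal{L}}$ — call this element $h$. I would compute $ad(y)(d) = [y,d]_{\widehat{\mathcal{L}}} = -h \in \widetilde{\mathcal{L}}$. So $d$ is not itself an eigenvector in general, but $ad(y)$ sends $d$ into $\widetilde{\mathcal{L}}$, on which $ad(y)$ acts diagonalizably. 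Write $h = h_0 + \sum_{w \neq 0} h_w$ for the eigenvector decomposition of $h$ under $ad(y)$ in $\widetilde{\mathcal{L}}$, where $ad(y)(h_w) = w\, h_w$. The natural candidate for a corrected eigenvector is $d' = d + \sum_{w \neq 0} \tfrac{1}{w} h_w$: then $ad(y)(d') = -h_0 - \sum_{w\neq 0} h_w + \sum_{w \neq 0} \tfrac{1}{w}\cdot w\, h_w = -h_0$. Thus $ad(y)(d') = -h_0$, where $h_0$ is a weight-zero vector, so replacing $d$ by $d'$ in the basis, it remains to see $h_0 = 0$, i.e. that $[d,y]_{\widehat{\mathcal{L}}}$ has no weight-zero component under $ad(y)$.

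\smallskip

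\textbf{Killing off $h_0$.} This is the step I expect to be the real content. Since $h = [d,y]_{\widehat{\mathcal{L}}} = -[x',y]_{\widehat{\mathcal{L}}}$, and $[x',y]$ lies in $\mathcal{L} \oplus kc$ with $\mathcal{L}$-component $[x',y]_{\mathcal{L}}$, one sees $h \in \mathcal{L} \oplus kc$. To show its weight-zero part under $ad(y)$ vanishes I would use the invariant nondegenerate form $(\cdot,\cdot)$ on $\widehat{\mathcal{L}}$ together with the fact that $ad(y)$ is skew with respect to it, so distinct eigenspaces of $ad(y)$ are orthogonal and each eigenspace is nondegenerately paired with itself; in particular the weight-zero eigenspace of $ad(y)$ in $\widehat{\mathcal{L}}$ carries a nondegenerate form. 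One then shows $h_0$ is orthogonal to everything in that weight-zero eigenspace: for any weight-zero $z$ (which, by the analogue of Lemma~\ref{x commutes with y in A^}, can be taken with $[y,z]_{\widehat{\mathcal{L}}} = 0$), the Jacobi/invariance identity gives $(h_0, z) = (h, z)$ up to the non-zero-weight pieces, and $(h,z) = ([d,y]_{\widehat{\mathcal{L}}}, z) = -(d, [y,z]_{\widehat{\mathcal{L}}}) = 0$ by invariance and $[y,z]_{\widehat{\mathcal{L}}} = 0$. Hence $h_0 = 0$, $d'$ is a genuine eigenvector of $ad(y)$ with eigenvalue $0$, and $\{c\} \cup \{d'\} \cup (\text{eigenbasis of } ad(y) \text{ on } \mathcal{L})$ is the desired eigenbasis of $\widehat{\mathcal{L}}$. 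The main obstacle is precisely organizing this last orthogonality argument cleanly — keeping track of the fact that $ad(y)$ acts on $\widetilde{\mathcal{L}}$ but $d$ sits outside it, and that the weight-zero eigenspace in the full algebra contains $d'$ (or $x$, or $c$) and must still be paired nondegenerately.
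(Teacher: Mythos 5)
There is a genuine gap, and it sits precisely in the step you dismiss as ``the easy part.'' An eigenbasis $\{e_i\}$ of $ad(y)$ acting on $\mathcal{L}$ does \emph{not}, together with $c$, automatically give an eigenbasis of $\widetilde{\mathcal{L}}=\mathcal{L}\oplus kc$: the two brackets differ by the central cocycle term, so that $[y,e_i]_{\mathcal{L}}=u_ie_i$ only gives $[y,e_i]_{\widehat{\mathcal{L}}}=u_ie_i+b_ic$ with a priori unknown $b_i\in k$. Thus $e_i$ is an eigenvector in $\widetilde{\mathcal{L}}$ only after the correction $\tilde e_i=e_i+\frac{b_i}{u_i}c$ when $u_i\neq 0$, and when $u_i=0$ one must actually \emph{prove} $b_i=0$ (otherwise $e_i\mapsto b_ic\mapsto 0$ is a genuine Jordan block). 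This correction-plus-vanishing argument is the entire content of the paper's proof. The gap propagates: your decomposition of $h=[d,y]_{\widehat{\mathcal{L}}}$ into $ad(y)$-eigencomponents inside $\widetilde{\mathcal{L}}$ presupposes the diagonalizability on $\widetilde{\mathcal{L}}$ that you have not yet established, and your final claimed eigenbasis $\{c\}\cup\{d'\}\cup\{e_i\}$ is wrong as written because the uncorrected $e_i$ need not be eigenvectors in $\widehat{\mathcal{L}}$.

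By contrast, the step you labor over --- locating an eigenvector supplementing $\widetilde{\mathcal{L}}$ --- is a one-liner: by Lemma~\ref{x commutes with y in A^} the element $x=x'+d$ satisfies $[x,y]_{\widehat{\mathcal{L}}}=0$, so $x$ itself is a $0$-eigenvector and $\{\tilde e_i\}\cup\{c,x\}$ spans $\widehat{\mathcal{L}}$; no corrected $d'$ and no analysis of $h_0$ are needed. The encouraging news is that your key mechanism for killing $h_0$ --- invariance of $(\cdot,\cdot)$ combined with $[x,y]_{\widehat{\mathcal{L}}}=0$ --- is exactly the tool the paper uses to show $b_i=0$ in the $u_i=0$ case, via $(x,[y,e_i]_{\widehat{\mathcal{L}}})=([x,y]_{\widehat{\mathcal{L}}},e_i)=0$ against $(x,b_ic)=\beta b_i$; you have the right idea deployed in the wrong place. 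Do note, however, that your appeal to ``the weight-zero eigenspace of $\widehat{\mathcal{L}}$ carries a nondegenerate form'' is circular (it assumes the eigenspace decomposition of $\widehat{\mathcal{L}}$ being proven) and delicate: $c$ is isotropic and orthogonal to all of $\widetilde{\mathcal{L}}$, so orthogonality to $\widetilde{\mathcal{L}}_0$ alone only forces $h_0\in kc$, and one must additionally pair against $x$ (or $d$) to kill that $c$-component.
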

\begin{proof} Choose a $k$-basis $\{\,e_i\,\}$ of $\mathcal{L}$
consisting of eigenvectors
of $ad(y)$. Thus we have $[y,e_i]=u_ie_i$ where $u_i\in k$ and hence
$$
[y,e_i]_{\widehat{\mathcal{L}}}=u_ie_i+b_ic
$$
where $b_i\in k$.

\smallskip

\noindent
{\it Case $1$}: Suppose first that $u_i\not=0$. Let $$
\tilde{e}_i=e_i+\frac{b_i}{u_i}\cdot c \in \widetilde{\mathcal{L}}.
$$
Then we have
$$
[y,\tilde{e}_i]_{\widehat{\mathcal{L}}}=[y,e_i]_{\widehat{\mathcal{L}}}=
u_ie_i+b_ic=u_i \tilde{e}_i
$$
and therefore  $\tilde{e}_i$ is an eigenvector of the operator
$ad(y):\widehat{\mathcal{L}}\to \widehat{\mathcal{L}}$.

\smallskip

\noindent
{\it Case $2$}:
Let now $u_i=0$. Then $[y,e_i]_{\widehat{\mathcal{L}}}=b_ic$ and we claim that
$b_i=0.$ 
Indeed, we have
$$
(x,[y,e_i]_{\widehat{\mathcal{L}}})=([x,y]_{\widehat{\mathcal{L}}},e_i)=(0,e_i)=0
$$
and on the other hand
$$
(x,[y,e_i]_{\widehat{A}})=(x,b_ic)=(x'+d,b_ic)=(d,b_ic)=\beta b_i.
$$
It follows that $b_i=0$ and thus $\tilde{e_i}=e_i$ is an eigenvector of $ad(y)$.

Summarizing, replacing $e_i$ by $\tilde{e}_i$ we see that
the set $\{\,\tilde{e}_i\,\}\cup \{\,c,x\,\}$ is a $k$-basis
of $\widehat{\mathcal{L}}$
consisting of eigenvectors of $ad(y)$.
\end{proof}
\begin{proposition}~\label{diagonalizable element in A} The subalgebra
$\mathcal{L}_0$
contains an element $y$ such that the operator $ad(y):\mathcal{L}\to
\mathcal{L}$ is $k$-diagonalizable.
\end{proposition}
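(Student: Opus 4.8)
The plan is to push $\mathcal{L}_0$ into a finite dimensional semisimple $k$-Lie algebra so that it becomes the degree zero component of a grading by an abelian group --- equivalently the fixed subalgebra of a linearly reductive group --- and then to extract the required element from the structure theory of reductive Lie algebras.

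First one records the structure of $O_x$. Since $x$ is $k$-diagonalizable, $O_x$ is a $k$-diagonalizable derivation of $\mathcal{L}$; let $\Lambda\subseteq(k,+)$ be the subgroup generated by its eigenvalues. By Corollary~\ref{finiteness} and Lemma~\ref{shift}, $\Lambda$ is finitely generated (hence free of finite rank), and $m\in\Lambda$ (by Lemmas~\ref{shift} and~\ref{w=0}, using $\mathcal{L}_0\neq 0$ from Theorem~\ref{zeroweight}); the eigenspace decomposition $\mathcal{L}=\bigoplus_{w\in\Lambda}\mathcal{L}_{w}$ is a grading of the $k$-Lie algebra $\mathcal{L}$ by $\Lambda$ (Lemma~\ref{sumtwoweights}), and Lemma~\ref{shift} gives $\mathcal{L}_{w+mn}=t^{n}\mathcal{L}_{w}$. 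Using Lemma~\ref{Rspan} and the decomposition~(\ref{basis}) (applied to $O_x$) one then has $\mathcal{L}=V\otimes_k R$ as $R$-modules, with $V:=\bigoplus_{i}\mathcal{L}_{w_i}$, the $w_i$ running over a set of representatives of the weight series; the summand attached to the series $\{0\}$ is $\mathcal{L}_0$, so $\mathcal{L}_0\subseteq V$, and for $y\in\mathcal{L}_0$ the operator $ad_{\mathcal{L}}(y)$ preserves each $\mathcal{L}_{\{w\}}\cong\mathcal{L}_{w_i}\otimes_k R$ and acts there as $\bigl(ad_{\mathcal{L}}(y)|_{\mathcal{L}_{w_i}}\bigr)\otimes\mathrm{id}_R$; hence $ad_{\mathcal{L}}(y)=\bigl(ad_{\mathcal{L}}(y)|_{V}\bigr)\otimes\mathrm{id}_R$ on $\mathcal{L}=V\otimes_k R$.

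Next, let $\mathfrak{s}_0:=\mathcal{L}\otimes_R k$ be the fibre at $t=1$ of the simple simply connected $R$-group scheme $\dG$ attached to $\mathcal{L}$ (Theorem~\ref{facts}(a)); it is a simple, hence semisimple, $k$-Lie algebra, and the reduction map $\mathcal{L}\to\mathfrak{s}_0$ restricts on $V$ to a $k$-linear isomorphism $V\simlgr\mathfrak{s}_0$ (because $\mathcal{L}=V\otimes_k R$ and $t\equiv 1$). Transporting the bracket, $\mathfrak{s}_0$ inherits from the $\Lambda$-grading of $\mathcal{L}$ a grading by the finitely generated abelian group $\Lambda/m\mathbb{Z}$ whose degree zero component is exactly $\mathcal{L}_0$ (the only series representative $w_i$ lying in $m\mathbb{Z}$ is $0$). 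The degree zero part of a grading of a semisimple Lie algebra by an abelian group is the fixed subalgebra of the corresponding diagonalizable group acting by automorphisms; in characteristic zero such a group is linearly reductive, so this fixed subalgebra is reductive, and here it is nonzero since $\mathcal{L}_0\neq 0$. Therefore a nonzero element $\bar y$ of a Cartan subalgebra of $\mathcal{L}_0$, viewed inside $\mathfrak{s}_0$, is semisimple in $\mathfrak{s}_0$, so $ad_{\mathfrak{s}_0}(\bar y)$ is $k$-diagonalizable.

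Finally one transports this back. For $y\in\mathcal{L}_0$ and $z\in V$ one has $[y,z]\in V$, so under $V\simlgr\mathfrak{s}_0$ the operator $ad_{\mathcal{L}}(y)|_{V}$ is identified with $ad_{\mathfrak{s}_0}(\bar y)$; since $ad_{\mathcal{L}}(y)=\bigl(ad_{\mathcal{L}}(y)|_{V}\bigr)\otimes\mathrm{id}_R$, it follows that $ad_{\mathcal{L}}(y)$ is $k$-diagonalizable. The element $y\in\mathcal{L}_0$ corresponding to $\bar y$, which is nonzero because $\bar y\neq 0$, is the one sought.

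The hard part is the identification in the third paragraph: realizing $\mathcal{L}_0$, once it sits inside the semisimple fibre $\mathfrak{s}_0$, as the degree zero component of a grading by an abelian group --- equivalently as the fixed points of a linearly reductive group --- which is exactly where the compatibility $\mathcal{L}_{w+mn}=t^{n}\mathcal{L}_{w}$ of the $O_x$-grading with the $R$-module structure is exploited. Granted that, the remaining steps (a nonzero reductive subalgebra of a semisimple Lie algebra contains a nonzero semisimple element, and the transport back) are routine.
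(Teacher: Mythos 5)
Your argument is correct, but it takes a genuinely different route from the paper. The paper stays at the generic fibre: it shows (Step 1) that for $y\in\mathcal{L}_0$ the $k$-diagonalizability of $ad(y)$ is equivalent to semisimplicity of $y$ in $\mathcal{L}_K=\mathcal{L}\otimes_R K$, by assembling Jordan bases of the finite-dimensional blocks $ad(y)|_{\mathcal{L}_{w_i}}$ into an $R$-basis of $\mathcal{L}$; it rules out the case where every element of $\mathcal{L}_0$ is nilpotent by producing a nonzero central element of $\mathcal{L}_0$ in the radical of the Killing form (Step 2); and it shows by a direct computation with the decomposition (\ref{basis}) that the semisimple part $y_s$ of any $y\in\mathcal{L}_0$ again lies in $\mathcal{L}_0$ (Step 3). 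You instead pass to the special fibre at $t=1$: the identification $\mathcal{L}\simeq V\otimes_k R$ with $V=\oplus_i\mathcal{L}_{w_i}$ (exactly Lemma~\ref{Rspan} plus (\ref{basis})) makes $\mathfrak{s}_0=\mathcal{L}\otimes_R k$ a semisimple Lie algebra graded by $\Lambda/m\mathbb{Z}$ with degree-zero part $\mathcal{L}_0$, and the required element drops out of the structure theory of fixed points of diagonalizable group actions; the transport back via $ad(y)=(ad(y)|_V)\otimes\mathrm{id}_R$ is clean and correct. Your route avoids the three-case Jordan analysis and is shorter, at the cost of invoking reductivity of centralizers of diagonalizable groups --- machinery the paper does use elsewhere (for $C_{\dG}(\gm')$), so this is not out of character. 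One step you should make explicit: from ``the fixed subalgebra is reductive'' you cannot directly conclude that Cartan elements of $\mathcal{L}_0$ are semisimple in $\mathfrak{s}_0$, since an abstractly reductive subalgebra of a semisimple Lie algebra need not consist of ambient-semisimple elements (the line spanned by $(h,e)$ in $\mathfrak{sl}_2\times\mathfrak{sl}_2$ is abelian, hence abstractly reductive, yet contains no nonzero semisimple element, and the Killing form even restricts nondegenerately to it). What you need, and what is true here, is that $\mathcal{L}_0=\mathfrak{s}_0^{H}$ is the Lie algebra of the centralizer of the diagonalizable group $H=D(\Lambda/m\mathbb{Z})$ in the adjoint group of $\mathfrak{s}_0$; that centralizer is reductive, so $\mathcal{L}_0$ is reductive \emph{in} $\mathfrak{s}_0$ and its Cartan subalgebras are Lie algebras of maximal tori, whence toral in $\mathfrak{s}_0$. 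With that sentence added, your proof is complete.
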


\begin{proof} We split the proof in three steps.

\smallskip

\noindent
{\it Step $1$}:
Assume first that there exists $y\in \mathcal{L}_0$  which as an element in
$\mathcal{L}_K=\mathcal{L}\otimes_R K$ is semisimple. We claim that our
operator $ad(y)$ 
is $k$-diagonalizable.
Indeed, choose representatives $w_1=0,w_2,\ldots,w_l$ of the weight series
of $ad(x)$.
The sets $\mathcal{L}_{w_1},\ldots,\mathcal{L}_{w_l}$ are vector spaces
over $k$ of finite dimension, by Lemma~\ref{finiteness1},
and they are stable with respect
to $ad(y)$ (because $y\in \mathcal{L}_0$). In each $k$-vector
space $\mathcal{L}_{w_i}$ choose a Jordan basis $$\{e_{ij},\ j=1,\ldots,l_i\}$$
of the operator $ad(y)|_{\mathcal{L}_{w_i}}$. Then
the set
\begin{equation}\label{R-basis}
\{\,e_{ij}, \ i=1,\cdots,l,\ j=1,\ldots, l_i\,\}
\end{equation}
is an $R$-basis
of $\mathcal{L}$, by Lemma~\ref{Rspan} and the decomposition
given in (\ref{basis}).
It follows that the matrix of the operator $ad(y)$
viewed as a $K$-operator of $\mathcal{L}\otimes_R K$
is a block diagonal matrix whose blocks corresponds to
the matrices of $ad(y)|_{\mathcal{L}_{w_i}}$ in the basis $\{e_{ij}\}$.
Hence (\ref{R-basis}) is a Jordan basis for $ad(y)$ viewed as an operator on
$\mathcal{L}\otimes_R K$. Since $y$ is a semisimple element of
$\mathcal{L}\otimes_R K$ all  matrices of $ad(y)|_{\mathcal{L}_{w_i}}$
are diagonal and this in turn implies that $ad(y)$ is
$k$-diagonalizable operator of $\mathcal{L}$.

\smallskip

\noindent
{\it Step $2$}: We next consider the case when all elements in $\mathcal{L}_0$
viewed as elements of the $R$-algebra $\mathcal{L}$
are nilpotent. Then $\mathcal{L}_0$, being finite dimensional,
is a nilpotent Lie algebra over $k$.
In particular its center is nontrivial since $\mathcal{L}_0\not=0$.
Let $c\in \mathcal{L}_0$ be a nonzero central element of
$\mathcal{L}_0.$  For any $z\in \mathcal{L}_0$ the
operators $ad(c)$ and $ad(z)$ of
$\mathcal{L}$ commute. Then $ad(z)\circ ad(c)$ is nilpotent,
hence  $\langle c,z\rangle =0.$
Furthermore, by Lemma~\ref{orthogonality1} $\langle c,z\rangle =0$ for any
$z\in \mathcal{L}_{w_i}$, $w_i\neq 0$. Thus $c\neq 0$ is
in the radical of the Killing
form of $\mathcal{L}$ -- a contradiction.

\smallskip

\noindent
{\it Step $3$}: Assume now that $\mathcal{L}_0$ contains an element $y$ which as
an element of $\mathcal{L}_K$ has nontrivial semisimple part $y_s$.
Let us first show that $y_s\in \mathcal{L}_{\{0\}}\otimes_R K$ and then
that $y_s\in \mathcal{L}_0$. By Step $1$, the last would complete the proof of
the proposition.

By  decomposition (\ref{basis}) applied to $A=\mathcal{L}$ we may write $y_s$
as a sum
$$
y_s=y_1+y_2+\cdots +y_l
$$
where $y_i\in \mathcal{L}_{\{w_i\}}\otimes_R K$. In Step $1$ we showed
that in an appropriate $R$-basis (\ref{R-basis}) of $\mathcal{L}$ the matrix of
$ad(y)$ is block diagonal whose blocks correspond
to the Jordan matrices of $ad(y)|_{\mathcal{L}_{w_i}}: \mathcal{L}_{w_i}\to
\mathcal{L}_{w_i}$.
It follows
that the semisimple part of $ad(y)$ is also a block diagonal matrix whose
blocks are semisimple parts of $ad(y)|_{A_{w_i}}$.

Since $\mathcal{L}_K$  is a semisimple Lie algebra
over a perfect field we get that $ad(y_{s})=ad(y)_s$.
Hence for all weights $w_i$ we have
\begin{equation}\label{semisimplepart}
[y_s,\mathcal{L}_{w_i}]\subset
\mathcal{L}_{w_i}.
\end{equation}
On the other hand, for any $u\in
\mathcal{L}_{w_i}$ we have
$$
ad(y_s)(u)=[y_1,u]+ [y_2,u]+\cdots +[y_l,u].
$$
Since $[y_j,u]\in \mathcal{L}_{\{w_i+w_j\}}\otimes_R K$,
it follows that $ad(y_s)(u)\in \mathcal{L}_{\{w_i\}}$
if and only if $[y_2,u]=\cdots=[y_l,u]=0$. Since this is true
for all $i$ and all $u\in \mathcal{L}_{w_i}$ and since the kernel
of the adjoint representation  of $\mathcal{L}_K$ is trivial
we obtain $y_2=\cdots=y_l=0$.
Therefore $y_s\in \mathcal{L}_{\{0\}}\otimes_R K$.

It remains to show that $y_s\in \mathcal{L}_0$. We may write $y_s$
in the form
$$
y_s=\frac{1}{g(t)}(u_0\otimes 1+ u_1\otimes t+\cdots+u_m\otimes t^m)
$$
where $u_0,\cdots,u_l\in \mathcal{L}_0$ and $g(t)=g_0+g_1t+\cdots+g_nt^n$
is a polynomial with coefficients $g_0,\ldots,g_n$ in $k$ with $g_n\not=0$.
The above equality can be rewritten in the form
\begin{equation}\label{newform}
g_0y_s+g_1y_s\otimes t+\cdots+g_ny_s\otimes t^n=u_0\otimes 1+
\cdots+u_m\otimes t^m.
\end{equation}

Consider an arbitrary index $i$ and let $u\in \mathcal{L}_{w_i}$.
Recall that by (\ref{semisimplepart}) we have
$$ad(y_s)(\mathcal{L}_{w_i})\subset \mathcal{L}_{w_i}.$$ Applying
both sides of (\ref{newform}) to $u$ and comparing
$\mathcal{L}_{w_i+n}$-components we conclude that
$[g_ny_s,u]=[u_n,u]$. Since this is true for all $u$ and all $i$ and since
the adjoint representation of $\mathcal{L}_K$ has trivial kernel
we obtain $g_ny_s=u_n$. Since $g_n\not=0$ we get
$y_s=u_n/g_n\in \mathcal{L}_0$.
\end{proof}

Now we can easily finish the proof of
Theorem ~\ref{lower bound of dimension theorem}.
Suppose the contrary.
Then ${\rm dim}(\gm)<3$ and hence by Lemma
~\ref{notproper} we have $\gm=\langle c,x^{'}+d\rangle$ with
$x^{'}\in \mathcal{L}$. Consider the operator $O_x$ on $\mathcal{L}$.
By Theorem~\ref{zeroweight} we have $\mathcal{L}_0\neq 0$.
By Propositions~\ref{diagonalizable element in A}
and ~\ref{diagonalizable element in A^} there exists a nonzero
$k$-diagonalizable element $y\in \mathcal{L}_0$. Clearly, $y$ is not contained
in $\gm$.
Furthermore, by Lemma~\ref{x commutes with y in A^},
 $y$ viewed as an element of $\widehat{\mathcal{L}}$ commutes with $\gm$
and by Lemma~\ref{diagonalizable element in A^} it is $k$-diagonalizable
in $\widehat{\mathcal{L}}$.
It follows that the subspace $\gm_{1}=\gm\oplus \langle y\rangle$ is
an abelian $k$-diagonalizable subalgebra of $\widehat{\mathcal{L}}.$
But this contradicts  maximality of $\gm$.

\section{All MADs are conjugate}

\begin{theorem}\label{main} Let $\widehat{\dG}(R)$ be the preimage of
$\{Ad(g) : g \in \dG(R)\}$ under the canonical map
${\rm Aut}_k(\widehat{\mathcal{L}}) \to {\rm Aut}_k(\mathcal{L}).$
Then all MADs of $\widehat{\mathcal{L}}$ are conjugate 
under $\widehat{\dG}(R)$ to the subalgebra $\mathcal{H}$ 
in \ref{example}.
\end{theorem}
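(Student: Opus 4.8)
The plan is to reduce the statement about MADs of $\widehat{\mathcal{L}}$ to the already-established conjugacy of MADs of $\widetilde{\mathcal{L}}$ (Theorem~\ref{facts}(b)), using the dimension bound of Theorem~\ref{lower bound of dimension theorem} and the surjectivity statement of Theorem~\ref{facts}(c). So let $\gm\subset\widehat{\mathcal{L}}$ be an arbitrary MAD. By Lemma~\ref{notproper}, $\gm\not\subset\widetilde{\mathcal{L}}$, so $\gm$ contains an element $x=x'+d$ with $x'\in\mathcal{L}$, and it also contains $c$. The first task is to produce, inside $\gm$, enough of a ``Cartan-like'' piece lying in $\widetilde{\mathcal{L}}$: namely I would set $\gm_0 = \gm\cap\widetilde{\mathcal{L}}$, which is a diagonalizable subalgebra of $\widetilde{\mathcal{L}}$ of codimension one in $\gm$ (since $\gm/\gm_0$ injects into $\widehat{\mathcal{L}}/\widetilde{\mathcal{L}}\cong kd$ and is nonzero by Lemma~\ref{notproper}), hence contains $kc$. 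By Theorem~\ref{lower bound of dimension theorem}, $\dim\gm\ge 3$, so $\dim\gm_0\ge 2$, i.e. $\gm_0$ strictly contains $kc$.

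Next I would use Theorem~\ref{facts}(b): $\gm_0$ is contained in a MAD $\widetilde{\mm}$ of $\widetilde{\mathcal{L}}$, and all such MADs are conjugate under $Ad(\dG(R))$ to $\dh_{\overline 0}\oplus kc$. Using Theorem~\ref{facts}(c), the automorphism of $\widetilde{\mathcal{L}}$ realizing this conjugation — which lies in $Ad(\dG(R))\subset\mathrm{Aut}_{R\text{-}Lie}(\mathcal{L})$ — lifts to an element of $\mathrm{Aut}_k(\widehat{\mathcal{L}})$, i.e. to an element of $\widehat{\dG}(R)$. Applying this lift, we may assume $\gm_0\subset\dh_{\overline 0}\oplus kc$, and hence $\gm\subset \dh_{\overline 0}\oplus kc\oplus kd=\mathcal{H}$. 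The remaining step is to show this forces $\gm=\mathcal{H}$: since $\mathcal{H}$ is itself a MAD of $\widehat{\mathcal{L}}$ by Example~\ref{example}, and $\mathcal{H}$ is in particular a diagonalizable subalgebra of $\widehat{\mathcal{L}}$ containing $\gm$, maximality of $\gm$ gives $\gm=\mathcal{H}$.

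The main obstacle I anticipate is the bookkeeping around the lifting of automorphisms: Theorem~\ref{facts}(c) only tells us that $\mathrm{Aut}_{R\text{-}Lie}(\mathcal{L})$ is in the image of the restriction map $\mathrm{Aut}_k(\widehat{\mathcal{L}})\to\mathrm{Aut}_k(\widetilde{\mathcal{L}})\simeq\mathrm{Aut}_k(\mathcal{L})$, so I must make sure that the conjugating automorphism produced in Theorem~\ref{facts}(b) is genuinely an $R$-linear automorphism of $\mathcal{L}$ (it is, being of the form $Ad(g)$ with $g\in\dG(R)$), so that it does lie in this image; and I must track that the preimage is exactly what was called $\widehat{\dG}(R)$ in the statement. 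A secondary subtlety is the claim that $\gm\cap\widetilde{\mathcal{L}}$ is codimension one and diagonalizable in $\widetilde{\mathcal{L}}$: diagonalizability is inherited because $\widetilde{\mathcal{L}}$ is an $ad(\gm)$-stable subspace of the diagonalizable $\widehat{\mathcal{L}}$ (as $[\widehat{\mathcal{L}},\widehat{\mathcal{L}}]\subset\widetilde{\mathcal{L}}$), so $\gm\cap\widetilde{\mathcal{L}}$ is spanned by weight vectors of $\gm$ lying in $\widetilde{\mathcal{L}}$, and one checks the quotient is $1$-dimensional using Lemma~\ref{notproper} together with $\widehat{\mathcal{L}}=\widetilde{\mathcal{L}}\oplus kd$. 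Once these points are nailed down, the argument is short.
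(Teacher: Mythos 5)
There is a genuine gap, and it sits exactly where the paper has to do all of its work. After conjugating so that $\gm_0=\gm\cap\widetilde{\mathcal{L}}\subset\dh_{\overline 0}\oplus kc$, you assert ``and hence $\gm\subset\dh_{\overline 0}\oplus kc\oplus kd=\mathcal{H}$.'' This is a non sequitur. By Lemma~\ref{notproper} the MAD $\gm$ contains an element $x=x'+d$ with $x'\in\mathcal{L}$, and $x\notin\gm_0$; knowing that $\gm_0$ lands in $\dh_{\overline 0}\oplus kc$ tells you nothing about the component $x'$ of $x$, which a priori is only constrained to lie in the centralizer $C_{\mathcal{L}}(\gm')$ of $\gm'=\gm\cap\mathcal{L}$ (and $\gm'$ may be a proper, even one-dimensional, subalgebra of $\dh_{\overline 0}$, so this centralizer can be large and nonabelian). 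For instance, nothing in your argument rules out $x'$ being a non-constant element such as $H_\alpha\otimes t$, or an element with a nontrivial component in the semisimple part of $C_{\mathcal{L}}(\gm')$. Consequently the final appeal to maximality of $\gm$ inside $\mathcal{H}$ never gets off the ground, because $\gm\subset\mathcal{H}$ has not been established.

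Closing this gap is the substance of the paper's proof: one writes $C_{\mathcal{L}}(\gm')={\bf z}\oplus A$ with $A$ the semisimple part, and proves $A=0$ by showing that otherwise $\widehat{A}=A\oplus kc\oplus kd$ satisfies the hypotheses of Sections 8--10, so that Theorem~\ref{zeroweight} and Propositions~\ref{diagonalizable element in A} and \ref{niceproperty} produce a $k$-diagonalizable element $y\in A$ commuting with $\gm$, contradicting maximality. Only then does one know $C_{\mathcal{L}}(\gm')$ is abelian, whence $\gm=\langle\dh_{\overline 0},x,c\rangle$, and a further separate lemma is still needed to show $x'\in\dh_{\overline 0}$ (i.e., that the $d$-component cannot be accompanied by a non-constant piece of $\dh_S$). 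Your reduction to conjugacy in $\widetilde{\mathcal{L}}$ plus the dimension bound reproduces only the first, easy step of the paper's argument; the analytic core --- the weight-space analysis of $ad(x'+d)$ and the nonvanishing of the zero-weight space --- is missing and cannot be bypassed by the maximality argument you propose.
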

\begin{proof}
Let $\gm$ be a $MAD$ of $\widehat{\mathcal{L}}.$ By Lemma~\ref{notproper},
$\gm\not\subset \widetilde{\mathcal{L}}$. Fix a vector $x=x'+d\in \gm$ where
$x'\in \mathcal{L}$ and let $\gm'=\gm\cap \mathcal{L}$. Thus we have
$\gm=\langle\,x,c,\gm'\,\rangle$. Note that $\gm'\not=0$, by Theorem
\ref{lower bound of dimension theorem}. Furthermore, since $\gm'$ is
$k$-diagonalizable in $\mathcal{L},$ without loss of generality we may assume
that $\gm'\subset \dh_{\overline{0}}$ given that by Theorem \ref{facts}(b)
there exists $g\in
\dG(R)$ such that $Ad(g)(\mm')\subset \dh_{\overline{0}}$
and that by Theorem~\ref{mainsequencetwist} $g$ has lifting to
${\rm Aut}_{k-Lie}(\widehat{\mathcal{L}})$.

Consider the weight space decomposition
\begin{equation}\label{rootdecomposition}
\mathcal{L}=
\mathop{\oplus}_{i}\, L_{\alpha_{i}}
\end{equation}
with respect to the $k$-diagonalizable subalgebra $\gm'$ of $\mathcal{L}$
where $\alpha_i\in (\gm')^*$
and as usual
$$
L_{\alpha_i}=\{\, z\in \mathcal{L}\ |\ [t,z]=\alpha_i(t)z \mbox{\ for all\ }
t\in \gm'\}.$$

\begin{lemma}\label{g_1 tens R invar otnositelno O_x}
$L_{\alpha_i}$
is invariant with respect to the operator $O_x$.
\end{lemma}
\begin{proof}
The $k$-linear operator $O_x$ commutes with $ad(t)$ for all $t\in \gm'$
(because $x$ and $\gm'$ commute in $\widehat{\mathcal{L}}$), so
the result follows.
\end{proof}

\begin{lemma}\label{in centralizer} We have
$x^{'}\in L_0.$
\end{lemma}
\begin{proof}
By our construction $\gm'$ is contained in $ \dh_0$,
hence $d$ commutes with the elements of $\gm'$.
But $x$ also commutes with the elements of $\gm'$ and so does $x'=x-d$.
\end{proof}

$L_0=C_{\mathcal{L}}(\gm')$, being
the Lie algebra of the reductive group scheme $C_{\dG}(\gm')$
(see \cite{CGP}), is of the form
$L_0={\bf z}\oplus A$ where ${\bf z}$ and $A$ are the Lie algebras of
the central torus of $C_{\dG}(\gm')$ and its semisimple part respectively.
Our next goal is to show that $A=0$.

Suppose this is not true. To get a contradiction we will show that the subset
$\widehat{A}=A\oplus kc\oplus kd\subset \widehat{\mathcal{L}}$
is a subalgebra satisfying conditions
a) and b) stated at the end of
\S\,\ref{Some properties of affine Kac-Moody Lie algebras}
and that it is stable with respect to $ad(x)$. This, in turn,
will allow us to construct an
element
$y\in A$ which viewed as an element of ${\widehat{\mathcal{L}}}$
commutes with $x$ and $\gm'$ and is $k$-diagonalizable. The last, of course,
contradicts the maximality of
$\gm$.

Let $\dH$ denote
the simple simply connected Chevalley-Demazure  algebraic
$k$-group corresponding to  $\dg$.
Since $\dG$ is split over $S$ we have
$$\dH_S = \dH \times_k S \simeq\dG_S = \dG \times_R S.$$ Let
$C_{\dg}(\gm')
={\bf t} \oplus{\bf r}$ where
${\bf t}$
is the Lie algebra of the central torus of the reductive $k$-group
$C_{\dH}(\gm')$ and
${\bf r}$ is the Lie algebra of its semisimple part.
Since  centralizers commute with base change, we obtain that
$$
\dt_S=\dt\otimes_k S ={\bf z}\otimes_R S={\bf z}_S,\ \
{\bf r}_S={\bf r}\times_k S=A\otimes_R S=A_S.$$
\begin{lemma}\label{invariantunderadd}
We have $ad(d)({A})  \subset
{A}$ and in particular $\widehat{A}$ is a subalgebra of $\widehat{\mathcal{L}}$.
\end{lemma}
\begin{proof}
Since ${\bf r}$ consists of ``constant'' elements we have
$[d,{\bf r}]_{\widehat{L}(\dg)_S}=0$, and this implies that
$[d,{\bf r}_S]_{\widehat{L}(\dg)_S}\subset {\bf r}_S$.
Also, viewing ${\mathcal{L}}$
as a subalgebra of $\widehat{L}(\dg)_S$ we have
$[d,\mathcal{L}]_{\widehat{\mathcal{L}}}\subset \mathcal{L}$.
Furthermore, $S/R$ is faithfully flat, hence
$A=A_S\cap \mathcal{L}={\bf r}_S\cap\mathcal{L}$.
Since  both subalgebras ${\bf r}_S$ and $\mathcal{L}$
are stable with respect to $ad(d)$, so is their intersection.
\end{proof}

\begin{lemma}\label{nondegencentralizer}
The restriction of the nondegenerate invariant bilinear form $(\cdot,\cdot)$
on $\widehat{\mathcal{L}}$ to $L_0$ is nondegenerate.
\end{lemma}
\begin{proof}
We mentioned before that the restriction of
$(\cdot,\cdot)$ to $\mathcal{L}$ is nondegenerate.
Hence in view of decomposition (\ref{rootdecomposition}) it
suffices to show that for all $a\in L_0$ and $b\in L_{\alpha_{i}}$
with $\alpha_i\not=0$
we have $(a,b)=0$.

Let
$l\in \gm'$ be such that $\alpha_i(l)\neq 0.$ Using the
invariance of $(\cdot,\cdot)$ we get $$
\alpha_i(l)(a,b)=(a,\alpha_i(l)b)=(a,[l,b])=([a,l],b)=0.
$$
Hence $(a,b)=0$ as required.
\end{proof}

\begin{lemma}\label{nondegeneracyofinvariantform}
The restriction of  $(\cdot,\cdot)$
to $A$ is  nondegenerate.
\end{lemma}
\begin{proof}
By lemma(\ref{nondegencentralizer}) it is enough to show that
${\bf z}$ and $A$ are orthogonal in $\widehat{\mathcal{L}}$.
Moreover, viewing ${\bf z}$ and $A$ as subalgebras of the split
affine Kac-Moody Lie
algebra $\widehat{L}(\dg)_S$ and using Remark~\ref{restrictioninvariantform}
we conclude that it suffices to verify that ${\bf z}_S={\bf t}_S$ and
$A_S={\bf r}_S$ are orthogonal in $\widehat{L}(\dg)_S$.

Let $a\in {\bf t}$ and $b\in {\bf r}$. We know that
$$(at^{\frac{i}{m}},bt^{\frac{j}{m}})=\langle a,b\rangle \delta_{i+j,0}$$
where $\langle \cdot,\cdot \rangle$ is a Killing form
of $\dg$. Since ${\bf r}$ is a semisimple algebra we have
${\bf r}=[{\bf r},{\bf r}]$. It follows that we can write $b$ in the form
$b=\sum[a_i,b_i]$ for some $a_i,b_i\in {\bf r}$. Using the facts  that
${\bf t}$ and ${\bf r}$ commute and that the Killing form is invariant
we have
$$
\langle a,b\rangle =\langle a,\sum[a_i,b_i]\rangle=\sum\langle
[a,a_i],b_i\rangle=
\sum\langle 0,b_i\rangle =0.$$
Thus $(at^{\frac{i}{m}},bt^{\frac{j}{m}})=0$.
\end{proof}

\begin{lemma}\label{stable}
The $k$-subspace  $A\subset \mathcal{L}$ is invariant with respect to $O_x$.
\end{lemma}
\begin{proof}
Let $a\in A.$ We need to verify that
$$[x,a]_{\widehat{\mathcal{L}}}\in
A\oplus kc\subset \widehat{\mathcal{L}}.
$$
But $[d,A]_{\widehat{\mathcal{L}}}\subset A+kc$ by
Lemma~\ref{invariantunderadd}.
We also have
$$[x',A]_{\widehat{\mathcal{L}}}\subset
A\oplus kc$$ (because $x'\in L_0$, by Lemma~\ref{in centralizer},
and $A$ viewed as a
subalgebra in $L_0$ is an ideal). Since $x=x'+d$ the result follows.
\end{proof}

According to Lemma~\ref{in centralizer} we can write
$x'=x'_0+x'_1$ where $x'_0\in {\bf z}$ and $x'_1\in
A$.

\begin{lemma}\label{ox=ox'}
We have $O_x|_{A}=O_{x^{'}_{1}+d}|_A$. In particular, the operator
$O_{x^{'}_1+d}|_A$ of $A$ is $k$-diagonalizable.
\end{lemma}
\begin{proof} By Lemma~\ref{stable}, we have
$O_x(A)\subset A$. Since $O_x$ is $k$-diagonalizable (as
an operator of $\mathcal{L}$), so is the operator $O_x|_{A}$ of
$A$.
Therefore the last assertion of the lemma follows from the first
one.

Let now $a\in A$. Using the fact that $x_0'$ and $a$ commute in $\mathcal{L}$
we have
$$
[x',a]_{\widehat{\mathcal{L}}}=[x'_0,a]_{\widehat{\mathcal{L}}}+
[x'_1,a]_{\widehat{\mathcal{L}}}=[x'_1,a]_{\widehat{\mathcal{L}}}+bc
$$
for some $b\in k$. Thus $O_x(a)=O_{x_1'+d}(a)$.
\end{proof}

\begin{lemma}\label{diagonalizable element in s^}
 The operator $ad(x'_1+d):\widehat{A}\to \widehat{A}$ is $k$-diagonalizable.
\end{lemma}
\begin{proof}
Since by Lemma~\ref{ox=ox'} $O_{x'_1+d}|_A:A\to A$ is
$k$-diagonalizable we can apply the same arguments as in
Lemma~\ref{diagonalizable element in A^}.
\end{proof}

Now we can produce the required element $y$.
It follows from Lemma~\ref{nondegeneracyofinvariantform} that the Lie algebra
$\widehat{A}$ satisfies all the conditions stated at the end of
Section~\ref{Some properties of affine Kac-Moody Lie algebras}.
By Lemma~\ref{diagonalizable element in s^}, $ad(x'_1+d)$ is
$k$-diagonalizable operator of $\widehat{A}$. Hence arguing as in
Theorem~\ref{lower bound of dimension theorem} we see that there exists a
nonzero
$y\in A$ such that $[y,x'_1+d]_{\widehat{\mathcal{L}}}=0$ and $ad(y)$ is a
$k$-diagonalizable operator on $\widehat{A}$. Then by
Lemma~\ref{ox=ox'} we have
$O_x(y)=O_{x'_1+d}(y)=0$ and hence, by
Lemma~\ref{x commutes with y in A^}, $x$ and $y$ commute in
${\widehat{\mathcal{L}}}.$

According to our plan it remains
to show that $y$ is $k$-diagonalizable in $\widehat{\mathcal{L}}$.
To see this we need
\begin{lemma} Let $z\in \gm'$. Then $[z,y]_{\widehat{\mathcal{L}}}=0$.
\end{lemma}
\begin{proof}
Since
$
y\in A\subset C_{\mathcal{L}}(\gm')
$
we have $[z,y]_{\mathcal{L}}=0$. Then $[z,y]_{\widehat{\mathcal{L}}}=bc$
for some $b\in k.$ It follows
$$
0=(0,y)=([x,z]_{\widehat{\mathcal{L}}},y)=(x,[z,y]_{\widehat{\mathcal{L}}})=(x^{'}+d,bc)
=(d,bc)=\beta b.
$$
This yields $b=0$ as desired.
\end{proof}

\begin{proposition}\label{niceproperty}
The operator $ad(y):\widehat{\mathcal{L}}\to \widehat{\mathcal{L}}$ is
k-diagonalizable.
\end{proposition}
\begin{proof}
According to Lemma~\ref{diagonalizable element in A^}, it suffices
to prove that $ad(y):\mathcal{L}\to \mathcal{L}$ is $k$-diagonalizable.
Since $y$
viewed as an element of $A$ is semisimple it is still semisimple viewed as an
element of $\mathcal{L}$. In particular, the $R$-operator
$ad(y):\mathcal{L}\to\mathcal{L}$ is also
semisimple.

Recall that we have the decomposition of $\mathcal{L}$ into the direct sum of
the weight spaces with respect to $O_x:$
$$
\mathcal{L}=\bigoplus_w
\mathcal{L}_w=\bigoplus_i \bigoplus_n \mathcal{L}_{w_i+mn}=\bigoplus_i
\mathcal{L}_{\{w_i\}}.
$$
Since $y$ and $x$ commute in $\widehat{\mathcal{L}}$,  for all
weights $w$ we have $ad(y)(\mathcal{L}_w)\subset \mathcal{L}_w$. If we choose
any $k$-basis of $\mathcal{L}_w$ it is still an $R$-basis of
$\mathcal{L}_{\{w\}}=\mathcal{L}_w\otimes_k R$ and
in this basis the $R$-operator $ad(y)|_{\mathcal{L}_{\{w\}}}$
and the $k$-operator $ad(y)|_{\mathcal{L}_w}$ have the same matrices.
Since the $R$-operator $ad(y)|_{\mathcal{L}_{\{w\}}}$ is semisimple, so is
$ad(y)|_{\mathcal{L}_w}$, i.e. $ad(y)|_{\mathcal{L}_w}$  is a
$k$-diagonalizable operator.
Thus $ad(y):\mathcal{L}\to\mathcal{L}$ is $k$-diagonalizable.
\end{proof}

Summarizing, assuming $A\not=0$ we have constructed the
$k$-diagonalizable element
$$
y\not\in \gm=\langle\,\gm',x,c\,\rangle
$$
in $\widehat{\mathcal{L}}$ which commutes with $\gm'$ and $x$ in
$\widehat{\mathcal{L}}$. Then the subalgebra $\langle\,\gm,y\,\rangle$ in
$\widehat{\mathcal{L}}$ is commutative and $k$-diagonalizable which is
impossible since $\gm$ is a MAD. Thus
$A$ is necessarily trivial and this implies
$C_{\mathcal{L}}(\gm')$ is the Lie algebra of the
$R$-torus $C_{\dG}(\gm')$, in particular  $C_{\mathcal{L}}(\gm')$ is abelian.

Note that $x'\in C_{\mathcal{L}}(\gm')$, by Lemma~\ref{in centralizer},
and that $\dh_{\overline{0}}\subset C_{\mathcal{L}}(\gm')$
(because $\gm'\subset \dh_{\overline{0}}$, by construction). Since
$C_{\mathcal{L}}(\gm')$ is abelian and since $x=x'+d$ it follows that
$ad(x)(\dh_{\overline{0}})=0$. Hence $\langle\,\dh_{\overline{0}},x,c\,\rangle$
is a commutative $k$-diagonalizable subalgebra in $\widehat{\mathcal{L}}$.
But it contains our MAD $\gm$. Therefore
$\gm=\langle\,\dh_{\overline{0}},x,c\,\rangle$.
To finish the proof of
Theorem~\ref{main} it now suffices to show that $x'\in \dh_{\overline{0}}$.
For that, in turn, we may view $x'$ as an element of  $L(\dg)_S$
and it suffices to show that $x'\in \dh$ because $\dh\cap \mathcal{L}=
\dh_{\overline{0}}$.
\begin{lemma}
 $x'\in \dh$.
\end{lemma}
\begin{proof}
Consider the root space decomposition of $\dg$ with respect
to the Cartan subalgebra $\dh$:
$$
\dg=\dh \oplus(\mathop{\oplus}_{\alpha\not=0}\, \dg_{\alpha}).
$$
Every $k$-subspace $\dg_{\alpha}$ has dimension $1$.
Choose a nonzero elements $X_{\alpha}\in \dg_{\alpha}$.
It follows from $\gm'=\dh_{\overline{0}}$ that $C_{L(\dg)_S}(\gm')=\dh_S$.
Thus  $x'\in \dh_S$. 
Then $\dg_{\alpha}\otimes_k S$ is stable with respect to $ad(x')$ and
clearly it is stable with respect to $ad(d)$. Hence it is also
stable with respect
to $O_x$.

Arguing as in Lemma~\ref{shift} one can easily see that the operator
$O_x$, viewed as an operator of $L(\dg)_S$, is $k$-diagonalizable.
 Since $\dg_{\alpha}\otimes_k S$ is stable with respect to $O_x$,
 it is the direct sum of its weight subspaces. Hence
 $$\dg_{\alpha}\otimes_k S=\mathop{\oplus}_w (L(\dg)_S)_{\{w\}}$$
 where $\{w\}=\{w+j/m \ |\ j\in\mathbb{Z}\}$ is the
weight series corresponding to
 $w$.
 But  $\dg_{\alpha}\otimes_k S$ has rank $1$ as an $S$-module.
 This implies that in the above decomposition we have only one weight series
 $\{w\}$
 for some weight $w$ of $O_x$.

 We next note that automatically we have ${\rm dim}_k(L(\dg)_S)_{w}=1$.
 Any its nonzero vector which is a generator
 of the $S$-module $\dg_{\alpha}\otimes_k S$ is
of the form $X_{\alpha}t^{\frac{j}{m}}$.
 It follows from Lemma~\ref{shift} that $\dg_{\alpha}=\langle X_{\alpha}\rangle$
 is also a weight subspace of $O_x$. Thus for every root $\alpha$ we have
 $$
 [x,X_{\alpha}]_{\widehat{L}(\dg)_S}=
 [x'+d,X_{\alpha}]_{\widehat{L}(\dg)_S}=[x',X_{\alpha}]=b_{\alpha}X_{\alpha}
 $$
 for some scalar $b_{\alpha}\in k$. Since $x'\in \dh_S$ this can happen
 if and only if $x'\in \dh$.
\end{proof}

By the previous lemma we have $x'\in \dh_{\overline{0}}$, hence
$$\gm=\langle\,\dh_{\overline{0}},c,d\,\rangle=
\mathcal{H}.$$ The proof of
Theorem~\ref{main} is complete.
\end{proof}

\end{document}